\documentclass[final,hidelinks,onefignum,onetabnum]{siamart251216}

\usepackage{lipsum}
\usepackage{amsfonts}
\usepackage{graphicx}
\usepackage{epstopdf}
\usepackage{booktabs}
\ifpdf
\DeclareGraphicsExtensions{.eps,.pdf,.png,.jpg}
\else
\DeclareGraphicsExtensions{.eps}
\fi

\usepackage{algorithm}
\usepackage{algpseudocode}


\usepackage{aliascnt}
\newsiamremark{remark}{Remark}
\crefname{remark}{remark}{remarks}
\Crefname{remark}{Remark}{Remarks}

\AddToHook{env/remark/begin}{\crefalias{theorem}{remark}}
\newsiamremark{assumption}{Assumption}
\crefname{assumption}{assumption}{assumptions}
\Crefname{assumption}{Assumption}{Assumptions}

\AddToHook{env/assumption/begin}{\crefalias{theorem}{assumption}}

\headers{Mode-Consistent Galerkin Closure}{H. Tomita}

\title{
  Mode-Consistent Galerkin Closure under Physical Energy Metric
  for Hyperbolic Systems
}
\author{
  Hirofumi Tomita\thanks{
    RIKEN Center for Computational Science, Kobe, Japan
    (\email{htomita@riken.jp}).
  }
}
\usepackage{amsopn}

\usepackage{mathtools,bm,enumitem}
\usepackage{amsfonts}  
\usepackage{amssymb}   

\ifpdf
\hypersetup{
  pdftitle={Mode-Consistent Galerkin Closure under Physical Energy Metric
    for Hyperbolic Systems},
  pdfauthor={Hirofumi Tomita}
}
\fi

\makeatletter

\newcommand{\hecf@pcutsub}{\smash{{[p]}}}
\newcommand{\hecf@ksup}{^{(k)}}
\newcommand{\hecf@osup}{^{\smash{\lower0.0ex\hbox{$\scriptstyle (o)$}}}}
\newcommand{\hecf@osupp}{^{\smash{\lower0.7ex\hbox{$\scriptstyle (o)$}}}}
\newcommand{\hecf@osuppp}{^{\smash{\lower-0.2ex\hbox{$\scriptstyle (o)$}}}}
\newcommand{\hecf@vsupp}{^{\smash{\lower1.0ex\hbox{$\scriptstyle \mathrm{vol}$}}}}
\newcommand{\hecf@ecsupp}{^{\smash{\lower1.3ex\hbox{$\scriptstyle \mathrm{EC}$}}}}

\newcommand{\hecf@ref}{\mathrm{ref}}

\newcommand{\hecf@U}{u}
\newcommand{\hecf@hatU}{U}

\newcommand{\hecf@metric}[4]{{#1\mkern-#2mu_{#3}#4}}
\newcommand{\hecf@tmetric}[4]{{\widetilde{#1}\mkern-#2mu_{#3}#4}}

\newcommand{\hecf@Hop}[3]{#1_{H#2}#3{}}
\newcommand{\hecf@tHop}[3]{{\widetilde #1}_{H#2}#3{}}

\newcommand{\hecf@HGop}[2]{#1_{H(G)}#2{}}
\newcommand{\hecf@tHGop}[2]{{\widetilde #1}_{H(G)}#2{}}

\newcommand{\hecf@Qnum}[2]{{\widetilde{\mathcal{Q}}}#1_{#2}{}}

\newcommand{\hecf@dotM}{\dot M}
\newcommand{\hecf@dottM}{\dot{\widetilde M}}

\newcommand{\pcutsub}{\hecf@pcutsub}
\newcommand{\ksup}{\hecf@ksup}
\newcommand{\osup}{\hecf@osup}
\newcommand{\osupp}{\hecf@osupp}
\newcommand{\osuppp}{\hecf@osuppp}
\newcommand{\vsupp}{\hecf@vsupp}
\newcommand{\ecsupp}{\hecf@ecsupp}

\newcommand{\U}{\hecf@U}
\newcommand{\Up}{\hecf@U_{\pcutsub}}

\newcommand{\UHAT}{\hecf@hatU}
\newcommand{\UHATO}{\hecf@hatU^{(o)}}
\newcommand{\UHATOref}{\hecf@hatU^{(o),\hecf@ref}}

\newcommand{\UHATp}{\hecf@hatU_{\pcutsub}{}}
\newcommand{\UHATOp}{\hecf@hatU_{\pcutsub}^{(o)}{}}
\newcommand{\UHATOpref}{\hecf@hatU_{\pcutsub}^{(o),\hecf@ref}{}}

\newcommand{\dUHAT}{\dot{\UHAT}}
\newcommand{\dUHATO}{\dot{\UHAT}\osup}

\newcommand{\dUHATp}{\dot{\UHAT}_{\pcutsub}{}}
\newcommand{\dUHATOp}{\dot{\UHAT}_{\pcutsub}\osup{}}

\newcommand{\HMETMAT}[2]{\hecf@metric{#1}{#2}{H}{}}
\newcommand{\HMETMATD}[2]{\hecf@metric{#1}{#2}{H,\Delta}{}}
\newcommand{\HMETMATO}[2]{\hecf@metric{#1}{#2}{H}{^{\;(o)}}}
\newcommand{\HMETMATOD}[2]{\hecf@metric{#1}{#2}{H,\Delta}{^{\;(o)}}}

\newcommand{\HMETMATAPP}[2]{\hecf@tmetric{#1}{#2}{H}{}}
\newcommand{\HMETMATDAPP}[2]{\hecf@tmetric{#1}{#2}{H,\Delta}{}}
\newcommand{\HMETMATOAPP}[2]{\hecf@tmetric{#1}{#2}{H}{^{\;(o)}}}
\newcommand{\HMETMATODAPP}[2]{\hecf@tmetric{#1}{#2}{H,\Delta}{^{\;(o)}}}

\newcommand{\HGMETMAT}[2]{\hecf@metric{#1}{#2}{H(G)}{}}
\newcommand{\HGMETMATO}[2]{\hecf@metric{#1}{#2}{H(G)}{^{\;(o)}}}
\newcommand{\HMETMATOG}[2]{\hecf@metric{#1}{#2}{H(G)}{^{\;(o)}}}
\newcommand{\HMETMATOGAPP}[2]{\hecf@tmetric{#1}{#2}{H(G)}{^{\;(o)}}}

\newcommand{\MH}{\hecf@Hop{M}{}{}}
\newcommand{\MHref}{\hecf@Hop{M}{}{^{\hecf@ref}}}
\newcommand{\MHAPP}{\hecf@tHop{M}{}{}}

\newcommand{\dMH}{{\hecf@dotM}_H{}}
\newcommand{\dMHref}{{\hecf@dotM}_H^{\hecf@ref}{}}
\newcommand{\dMHO}{{\hecf@dotM}^{(o)}_H{}}
\newcommand{\dMHV}{{\hecf@dotM}^{\mathrm{vol}}_H{}}

\newcommand{\dMHAPP}{{\hecf@dottM}_H{}}
\newcommand{\dMHOAPP}{{\hecf@dottM}\osupp_H{}}
\newcommand{\dMHVAPP}{{\hecf@dottM}\vsupp_H{}}

\newcommand{\dMHECCAPP}{{\hecf@dottM}\ecsupp_H{}}

\newcommand{\NH}{\hecf@Hop{N}{}{}}
\newcommand{\VH}{\hecf@Hop{V}{}{}}
\newcommand{\BH}{\hecf@Hop{B}{}{}}

\newcommand{\NHk}{\hecf@Hop{N}{}{\ksup}}
\newcommand{\VHk}{\hecf@Hop{V}{}{\ksup}}
\newcommand{\BHk}{\hecf@Hop{B}{}{\ksup}}

\newcommand{\NHX}{\hecf@Hop{N}{,X}{}}
\newcommand{\VHX}{\hecf@Hop{V}{,X}{}}
\newcommand{\BHX}{\hecf@Hop{B}{,X}{}}

\newcommand{\NHXk}{\hecf@Hop{N}{,X}{\ksup}}
\newcommand{\VHXk}{\hecf@Hop{V}{,X}{\ksup}}
\newcommand{\BHXk}{\hecf@Hop{B}{,X}{\ksup}}

\newcommand{\NHA}{\hecf@Hop{N}{,A}{}}
\newcommand{\VHA}{\hecf@Hop{V}{,A}{}}
\newcommand{\BHA}{\hecf@Hop{B}{,A}{}}

\newcommand{\NHD}{\hecf@Hop{N}{,\Delta}{}}
\newcommand{\VHD}{\hecf@Hop{V}{,\Delta}{}}
\newcommand{\BHD}{\hecf@Hop{B}{,\Delta}{}}

\newcommand{\NHAPP}{\hecf@tHop{N}{}{}}
\newcommand{\VHAPP}{\hecf@tHop{V}{}{}}
\newcommand{\BHAPP}{\hecf@tHop{B}{}{}}

\newcommand{\NHAPPK}{\hecf@tHop{N}{}{\ksup}}
\newcommand{\VHAPPK}{\hecf@tHop{V}{}{\ksup}}
\newcommand{\BHAPPK}{\hecf@tHop{B}{}{\ksup}}

\newcommand{\NHXAPP}{\hecf@tHop{N}{,X}{}}
\newcommand{\VHXAPP}{\hecf@tHop{V}{,X}{}}
\newcommand{\BHXAPP}{\hecf@tHop{B}{,X}{}}

\newcommand{\NHXAPPk}{\hecf@tHop{N}{,X}{\ksup}}
\newcommand{\VHXAPPk}{\hecf@tHop{V}{,X}{\ksup}}
\newcommand{\BHXAPPk}{\hecf@tHop{B}{,X}{\ksup}}

\newcommand{\NHAAPP}{\hecf@tHop{N}{,A}{}}
\newcommand{\VHAAPP}{\hecf@tHop{V}{,A}{}}
\newcommand{\BHAAPP}{\hecf@tHop{B}{,A}{}}

\newcommand{\NHDAPP}{\hecf@tHop{N}{,\Delta}{}}
\newcommand{\VHDAPP}{\hecf@tHop{V}{,\Delta}{}}
\newcommand{\BHDAPP}{\hecf@tHop{B}{,\Delta}{}}

\newcommand{\NHDAAPP}{\hecf@tHop{N}{,\Delta A}{}}
\newcommand{\VHDAAPP}{\hecf@tHop{V}{,\Delta A}{}}
\newcommand{\BHDAAPP}{\hecf@tHop{B}{,\Delta A}{}}

\newcommand{\NHO}{\hecf@Hop{N}{}{^{(o)}}}
\newcommand{\VHO}{\hecf@Hop{V}{}{^{(o)}}}
\newcommand{\BHO}{\hecf@Hop{B}{}{^{(o)}}}

\newcommand{\NHOD}{\hecf@Hop{N}{,\Delta}{^{(o)}}}
\newcommand{\VHOD}{\hecf@Hop{V}{,\Delta}{^{(o)}}}
\newcommand{\BHOD}{\hecf@Hop{B}{,\Delta}{^{(o)}}}

\newcommand{\NHOSKW}{N_{H\;\mathrm{skw}}^{(o)}{}}
\newcommand{\BHOG}{\hecf@HGop{B}{^{(o)}}}

\newcommand{\NHOAPP}{\hecf@tHop{N}{}{^{(o)}}}
\newcommand{\VHOAPP}{\hecf@tHop{V}{}{^{(o)}}}
\newcommand{\BHOAPP}{\hecf@tHop{B}{}{^{(o)}}}

\newcommand{\BHODAPP}{\hecf@tHop{B}{,\Delta}{^{(o)}}}
\newcommand{\NHOAPPSKW}{{\widetilde N}_{H\;\mathrm{skw}}^{(o)}{}}
\newcommand{\BHOGAPP}{\hecf@tHGop{B}{^{(o)}}}

\newcommand{\QHNUM}{\hecf@Qnum{}{H}}
\newcommand{\QHNUMO}{\hecf@Qnum{^{(o)}}{H}}
\newcommand{\QHGNUM}{\hecf@Qnum{}{H(G)}}
\newcommand{\QHGNUMO}{\hecf@Qnum{^{(o)}}{H(G)}}

\makeatother

\begin{document}
\maketitle


\begin{abstract}
  This paper derives a design principle for structure-preserving
  Galerkin formulations of energy-conserving hyperbolic systems. The
  aim is to reproduce the modal-energy-exchange structure of the
  continuous system within a resolved finite-mode space. Total energy
  conservation follows from this structure. We introduce a
  state-dependent physical-energy metric \(H\) and derive the
  corresponding energy-compatibility identity. In the infinite-mode
  exact-integration model, the volume contribution has an
  antisymmetric representation after \(H\)-orthogonalization, yielding
  pairwise modal energy exchange. Interface contributions take the
  same exchange form. To reproduce this structure in the practical
  finite-mode system, we combine two constructions: a Galerkin
  projection coupled with the physical-energy metric that guarantees
  the \(H\)-metric summation-by-parts identity, and an
  energy-compatibility closure that removes the component of the
  compatibility action contributing to the scalar energy residual.
  With a shared numerical energy flux at interfaces, they close the
  total-energy balance of the finite-mode system while preserving
  pairwise modal energy exchange. We also compare the practical
  operator construction with the finite-mode exact-integration
  reference and obtain an \(O(h^{p+1})\) defect estimate. Finally, we
  derive an equivalent form of the resulting equation in the fixed
  Galerkin basis for direct implementation.
\end{abstract}

\begin{keyword}
  hyperbolic systems,
  structure-preserving Galerkin methods,
  modal energy exchange
\end{keyword}

\begin{MSCcodes}
  65M60, 65M12, 35L40
\end{MSCcodes}

\setlength{\emergencystretch}{2.0em}
\section{Introduction}
\label{sec:intro}

When a partial differential equation (PDE) is discretized, its
dynamics split into resolved and unresolved components. This raises a
fundamental question: what should be regarded as the correct resolved
dynamics? The answer depends on which characteristics of the PDE the
resolved dynamics inherit. A discrete system is often regarded as an
approximation of the original PDE. This paper takes the position that,
from an energy-structure perspective, it should also reproduce the
energy structure of the PDE.
This perspective requires identifying the energy to be represented and
introducing a metric that quantifies it. This paper focuses on physical energy.
We view the resolved--unresolved split as a truncation of a modal
representation of the continuous dynamics. Under this metric, the
modal dynamics describe how physical energy is exchanged among modes.
The essential requirement is that the resolved dynamics retain the
modal-energy-exchange structure induced by the continuous system
without introducing spurious energy transfer, artificial dissipation,
or non-physical interaction pathways. This principle follows
Arakawa's philosophy of structural preservation
\cite{Arakawa1966,ArakawaLamb1981}. It is also essential to the closure
problem for unresolved dynamics, because unresolved effects should be
modeled on a resolved system that already has the correct energy
structure. Otherwise, we cannot consistently determine how unresolved
effects should act on the discrete system.

A Galerkin formulation is useful for representing modal dynamics. We
consider three levels in the model hierarchy: the continuous
system, the finite-mode exact-integration reference system whose
dynamics evolve in a finite-mode space with all weak-form integrals
evaluated exactly, and the practical finite-quadrature system that
implements the finite-mode dynamics using finite-degree quadrature.
Our design requirement is that all three systems exhibit the same
modal-energy-exchange structure and conserve the total physical
energy.

This hierarchy separates truncation and quadrature defects. The former
arises from removing unresolved components between the continuous and
finite-mode exact-integration reference systems, whereas the latter
arises between the finite-mode exact-integration reference and
practical finite-quadrature systems and includes numerical and
structural errors from finite-degree quadrature. The intermediate
reference separates the defects, provides a benchmark for resolved
dynamics, and is the target approached by the practical system as the
quadrature degree increases. However, finite-degree quadrature does
not generally preserve the energy structure of the continuous system.
This paper presents a design principle for recovering this structure
in the practical system.

We consider an energy-conserving hyperbolic system in \(d\) spatial
dimensions:
\begin{equation}
  \partial_t \U + A_k(\U)\,\partial_{x_k}\U = 0,
  \label{eq:ql-system}
\end{equation}
where \(x\in\mathbb{R}^d\) is the spatial coordinate,
\(\U\in\mathbb{R}^m\) is the state variable with \(m\) components, and
\(A_k(\U)\) is the flux Jacobian. We use the Discontinuous Galerkin
(DG) method as the Galerkin realization. Its element-local structure is
well suited to massively parallel computation
\cite{HesthavenWarburton2008}, and DG belongs to the mature class of
high-order element-based methods developed together with
spectral-element formulations \cite{Karniadakis2005}.

A basic tool for stable high-order DG methods is the
summation-by-parts (SBP) structure, which mimics integration by parts
at the discrete level. Originally introduced for the stability
analysis of finite difference methods for hyperbolic equations, SBP
was developed into a systematic framework for constructing stable
high-order difference operators
\cite{KreissScherer1974,Strand1994}. Combined with the simultaneous
approximation term (SAT) for weakly imposing boundary and
interelement conditions, SBP--SAT provides a standard framework for
stable high-order discretizations of time-dependent problems
\cite{CarpenterGottliebAbarbanel1994,Carpenter1999}. Its relation to
DG and discontinuous Galerkin spectral element methods was clarified
within this framework \cite{Gassner2013}.
For nonlinear problems, split forms suppress aliasing and
non-physical energy generation by exploiting algebraically equivalent
representations of nonlinear terms to satisfy a discrete
convex-entropy compatibility relation
\cite{CarpenterFisherNordstrom2014,Gassner2016}. Flux differencing
realizes such forms in physical space. By combining Tadmor's
two-point entropy-conservative flux \cite{Tadmor1987} with the SBP
structure, it produces a finite-volume-like cancellation within each
element
\cite{FisherCarpenter2013,CarpenterFisherNielsenFrankel2014}.
This construction underlies many modern entropy-stable DG methods
\cite{Gassner2016,ChenShu2017JCP}. The entropy-conservative volume
discretization prevents entropy generation within each element, while
an entropy-stable SAT-based interface flux supplies the required
dissipation
\cite{FisherCarpenter2013,CarpenterFisherNielsenFrankel2014,ChenShu2017JCP}.
Thus, SBP--SAT, split forms, flux differencing, and Tadmor-type
two-point fluxes form a central framework for modern high-order
entropy-stable DG methods \cite{Ranocha2023}.

The design theory in this paper is mathematically based on
convex-entropy theory, but its design objective differs from that of
existing entropy-conservative/stable methods. First, the target is
physical energy rather than a convex entropy; in general, a convex
entropy is not identical to the physical energy of the system.
Second, we design the modal dynamics of the physical energy to mimic
those of the continuous system. Existing entropy-conservative/stable
DG formulations construct elementwise entropy or energy balances
through node-pair cancellations in physical space and interface
fluxes. These structures depend on the nodal locations and quadrature
weights and are not designed to ensure faithful modal energy
exchange. Thus, even when transformed into modal form, the resulting
operator need not preserve the modal-energy-exchange structure of the
continuous system.

To realize a closure based on modal energy exchange, we need a metric
that represents the physical energy itself. This metric should not be
frozen in space or time, because its state dependence enters the modal
energy exchange. We introduce a symmetric positive definite
(SPD) matrix field \(H(\U)\in\mathbb{R}^{m\times m}\) and represent the
physical energy density by
\begin{equation}
  e(\U):=\tfrac12 \U^\top H(\U)\U.
  \label{eq:h-energy-density}
\end{equation}
This paper shows how to close the resolved finite-mode dynamics under
this metric so that the discrete system faithfully reproduces modal
energy exchange and conserves the total physical energy.
The resulting formulation is also implementable in finite-mode
simulations while preserving the modal-energy-exchange structure.

\begin{figure}[t]
  \centering
  \includegraphics[width=1.0\linewidth]{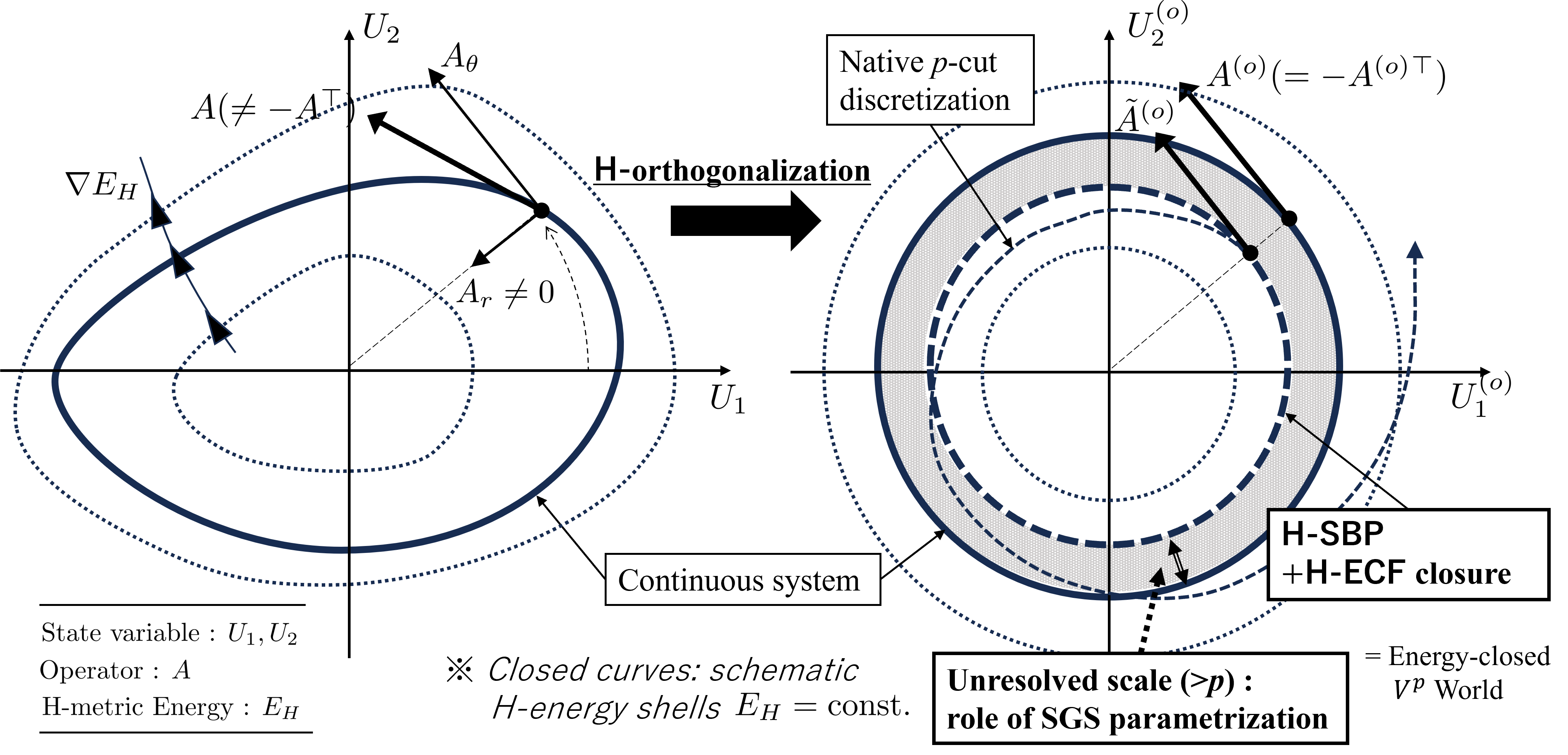}
  \caption{ Schematic phase-space picture of the model hierarchy.  The
    dotted closed curves denote constant physical-energy surfaces.
    The thick solid curve represents the continuous dynamics, the thin
    dashed curve the native \(p\)-cut dynamics, and the thick dashed
    curve the finite-mode dynamics reconstructed by the \(H\)--SBP
    identity and \(H\)--ECF. The left panel shows the original
    coordinates, and the right panel shows the \(H\)-orthogonalized
    coordinates, where the physical energy becomes one-half the
    squared Euclidean norm.  }
  \label{fig:model-roadmap}
\end{figure}

\Cref{fig:model-roadmap} summarizes the viewpoint and contribution of
this paper.
After \(H\)-orthogonalization, the physical energy becomes one-half
the squared Euclidean norm, and the continuous volume contribution can
be written in antisymmetric form as pairwise modal exchange on an
energy shell.
A native \(p\)-cut does not generally preserve this structure.
We recover it in \(V^p\) by enforcing the \(H\)--SBP and
energy-compatibility identities. The latter is achieved by
\(H\)--Energy Compatibility Forcing (\(H\)--ECF), as explained in
\Cref{sec:h-ecf}.
The shaded region represents the unresolved components removed by
modal truncation and hence the target of subgrid-scale (SGS) modeling.

In \Cref{sec:h-foundation}, we construct the state-dependent physical
energy metric \(H(\U)\) by normalizing the physical-energy Hessian so
that \cref{eq:h-energy-density} holds exactly, and derive the
corresponding energy-compatibility identity. After introducing the
discrete notation and assumptions in \Cref{sec:notation}, the main
contributions are as follows.
\begin{enumerate}
  [label=(\roman*), leftmargin=1.5em, labelsep=0.4em, itemsep=0.3em,
    topsep=0.3em ]
\item
  We construct the infinite-mode exact-integration Galerkin (EIG)
  system, denoted I--EIG, and show that, after
  \(H\)-orthogonalization, the volume contribution can be written in
  antisymmetric form as pairwise modal exchange within each element.
  Interface contributions take the same pairwise form and cancel
  across internal interfaces, leaving only external-boundary
  contributions
  (\Cref{sec:i-eig}).
\item
  For the practical finite-quadrature system, we derive structural
  conditions for the \(H\)--SBP identity and introduce Coupled
  Projection Galerkin (\(\mathrm{CPG}\)), which consistently projects
  coupled \(H\)-weighted fields. CPG guarantees the \(H\)--SBP identity
  but does not alone close energy compatibility
  (\Cref{sec:discrete-defect}).
\item
  We introduce \(H\)--ECF, which removes the component of the
  compatibility action that contributes to the scalar energy residual
  and represents the remaining tangential action by an antisymmetric
  lift. Combined with CPG, it recovers pairwise modal energy exchange
  in the \(H\)-orthogonalized representation. We also derive the
  equivalent fixed-basis velocity equation, which is linear in the
  time derivative of the fixed-basis coefficients and provides an
  implementation of the \(H\)--ECF/CPG system
  (\Cref{sec:h-ecf}).
\item
  We compare the practical \(H\)--ECF/CPG construction with its
  finite-mode exact-integration counterpart \(H\)--ECF/EIG and
  establish an \(O(h^{p+1})\) estimate for the defect in the
  antisymmetric modal-energy-exchange operator. The estimate separates
  the high-order projection contribution from the quadrature-error
  contribution in the operator-construction layer
  (\Cref{sec:exchange-defect}).
\end{enumerate}
Finally, \Cref{sec:discussion} summarizes the model hierarchy and
interprets the exchange-defect estimate.

\section{Physical energy metric and energy-compatibility identity}
\label{sec:h-foundation}

Convex entropy theory provides the algebraic construction used here:
an entropy pair satisfies a compatibility condition, and the
Hessian of the energy symmetrizes the flux Jacobian
\cite{FriedrichsLax1971,Dafermos2016}.  We apply this
route to systems and state variables for which the physical energy
itself is a nondegenerate convex entropy.  After a scalar
normalization of the Hessian, we obtain an SPD metric \(H(\U)\) that
represents the physical energy exactly as the quadratic form in
\cref{eq:h-energy-density}.  This section then derives the
\(H\)-metric energy-compatibility identity used throughout this paper.
The next two lemmas recall standard consequences of convex entropy
theory.
\begin{lemma}[Physical energy-pair compatibility]
  \label{lem:energy-pair}
  Let \(\U\) be a smooth solution of \cref{eq:ql-system}.  Let
  \(e(\U)\) and \(g_k(\U)\) be a scalar physical energy density and the
  corresponding scalar energy fluxes, respectively.  If
  \begin{equation}
    \nabla_\U e(\U)^T A_k(\U) = \nabla_\U g_k(\U)^T ,
    \label{eq:energy-pair}
  \end{equation}
  then \(e\) and \(g_k\) satisfy the local energy conservation law
  \begin{equation}
    \partial_t e(\U) + \partial_{x_k} g_k(\U) = 0
    \label{eq:energy-conservation}
  \end{equation}
  along solutions of \cref{eq:ql-system}.  Conversely, if
  \cref{eq:energy-conservation} holds for all
  solutions of \cref{eq:ql-system}, then
  \cref{eq:energy-pair} holds.
\end{lemma}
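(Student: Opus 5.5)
The forward direction is a chain-rule computation. Along a smooth solution \(\U\) of \cref{eq:ql-system} we have \(\partial_t e(\U) = \nabla_\U e(\U)^T \partial_t \U\), and substituting \(\partial_t\U = -A_k(\U)\,\partial_{x_k}\U\) gives
\begin{equation*}
  \partial_t e(\U) = -\nabla_\U e(\U)^T A_k(\U)\,\partial_{x_k}\U .
\end{equation*}
Likewise \(\partial_{x_k} g_k(\U) = \nabla_\U g_k(\U)^T \partial_{x_k}\U\), summed over \(k\). Adding the two expressions, the residual \(\partial_t e + \partial_{x_k} g_k\) equals
\begin{equation*}
  \bigl(\nabla_\U g_k(\U)^T - \nabla_\U e(\U)^T A_k(\U)\bigr)\,\partial_{x_k}\U ,
\end{equation*}
and this vanishes identically under \cref{eq:energy-pair}. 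This proves \cref{eq:energy-conservation}.

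For the converse, assume \cref{eq:energy-conservation} holds for all smooth solutions. The same computation shows that, for every such solution and every \((x,t)\),
\begin{equation*}
  \sum_k r_k(\U)^T\,\partial_{x_k}\U = 0, \qquad r_k(\U)^T := \nabla_\U g_k(\U)^T - \nabla_\U e(\U)^T A_k(\U).
\end{equation*}
To conclude that each row vector \(r_k(\bar u)\) vanishes at an arbitrary state \(\bar u\), I need to realize arbitrary data \((\U,\partial_{x_1}\U,\dots,\partial_{x_d}\U) = (\bar u,\xi_1,\dots,\xi_d)\) at a point by some solution. I would do this by taking Cauchy data \(\U(x,0) = \bar u + \sum_k \xi_k x_k\), smoothly cut off away from the origin. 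Local-in-time existence of a smooth solution of the quasilinear system then gives, at \(t=0\) and \(x=0\), the relation \(\sum_k r_k(\bar u)^T\xi_k = 0\) for all \(\xi_k\in\mathbb{R}^m\). Choosing \(\xi_j = 0\) for \(j\neq k\) and \(\xi_k\) ranging over the canonical basis yields \(r_k(\bar u) = 0\), which is \cref{eq:energy-pair}. Only the time-zero slice is used, since the identity is pointwise and involves only first derivatives of \(\U\).

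The main obstacle is this realizability step in the converse. It needs a local existence theory for smooth solutions; this is available for symmetric-hyperbolic systems, and hyperbolicity is implicit in the paper's setting. It is also cleaner to phrase the hypothesis as ``for all smooth solutions on some time interval,'' so that the time-zero evaluation is legitimate. If existence is not to be invoked, I would instead read ``all solutions'' as the natural hypothesis that the identity holds for arbitrary smooth fields with \(\partial_t\U\) defined by \cref{eq:ql-system}, which is the standard interpretation in convex-entropy theory \cite{Dafermos2016}. The argument then reduces directly to the pointwise algebra above.
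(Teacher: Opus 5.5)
Your proof is correct and follows the paper's route. The chain rule together with \cref{eq:ql-system} expresses \(\partial_t e(\U)+\partial_{x_k}g_k(\U)\) as \(\bigl(\nabla_\U g_k(\U)^T-\nabla_\U e(\U)^T A_k(\U)\bigr)\partial_{x_k}\U\), and both directions are read off from this identity. The paper asserts the converse directly from it, so your realizability argument (arbitrary first-order data at a point via local existence, or the pointwise reading of ``all solutions'') makes explicit a step the paper leaves implicit.
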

\begin{proof}
  The chain rule and \cref{eq:ql-system} give
  \( \partial_t e(\U) + \partial_{x_k}g_k(\U)
  = [ -\nabla_\U e(\U)^T A_k(\U) + \nabla_\U g_k(\U)^T ]
  \partial_{x_k}\U \).
  Hence, \cref{eq:energy-conservation} holds if and only if
  \cref{eq:energy-pair} holds.
\end{proof}

\begin{lemma}[Hessian-based symmetrizer]
  \label{lem:hessian-sym}
  Assume that \(A_k(\U)\) is the Jacobian of a conservative state flux,
  \(A_k(\U)=\partial_\U F_k(\U)\), and that the physical energy pair
  satisfies \cref{eq:energy-pair}.  Define the Hessian as
  \( \mathcal{H}_e(\U) := \nabla_\U^2 e(\U) \).
  Then
  \begin{equation}
    \mathcal{H}_e(\U)A_k(\U)=[\mathcal{H}_e(\U)A_k(\U)]^{\top} .
    \label{eq:hessian-sym}
  \end{equation}
\end{lemma}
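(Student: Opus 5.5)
The plan is to differentiate the compatibility relation \cref{eq:energy-pair} once more in \(\U\) and use the symmetry of second derivatives. Write \cref{eq:energy-pair} componentwise, using \(A_k=\partial_\U F_k\):
\begin{equation*}
  \sum_{i} \partial_{\U_i} e\,\partial_{\U_j} F_{k,i}
  = \partial_{\U_j} g_k
  \qquad\text{for each } j .
\end{equation*}
Differentiating with respect to \(\U_l\) gives
\begin{equation*}
  \sum_i \partial_{\U_l}\partial_{\U_i} e\,\partial_{\U_j}F_{k,i}
  + \sum_i \partial_{\U_i} e\,\partial_{\U_l}\partial_{\U_j}F_{k,i}
  = \partial_{\U_l}\partial_{\U_j} g_k .
\end{equation*}

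The first sum is the \((l,j)\) entry of \(\mathcal{H}_e A_k\), since \(\mathcal{H}_e\) is symmetric. Now examine the other two terms. The second sum is symmetric in \((l,j)\) because each \(F_{k,i}\) has a symmetric Hessian. The right-hand side is symmetric in \((l,j)\) because \(g_k\) has a symmetric Hessian. The first sum is therefore the difference of two symmetric quantities, so \((\mathcal{H}_eA_k)_{lj}=(\mathcal{H}_eA_k)_{jl}\), which is \cref{eq:hessian-sym}.

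The only point needing care is regularity. We need \(e\), \(F_k\), and \(g_k\) to be \(C^2\) so that the mixed partials commute (Schwarz's theorem), and we need \cref{eq:energy-pair} to hold as an identity on an open set of states. The latter is what permits differentiating in \(\U\). I would state these smoothness assumptions implicitly, consistent with \Cref{lem:energy-pair}, and otherwise the argument is a two-line computation with no real obstacle. The conservative-flux hypothesis \(A_k=\partial_\U F_k\) is essential: it is exactly what makes the second term symmetric, since for a general non-conservative \(A_k\) the derivative \(\partial_{\U_l}(A_k)_{ij}\) need not be symmetric in \((l,j)\).
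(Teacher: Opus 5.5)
Your proof is correct and follows the paper's own argument: differentiate the componentwise relation \(e_iA_{k,ij}=(g_k)_j\) in \(\U_\ell\), then note that the \(g_k\)-Hessian term and the \(F_k\)-Hessian term (symmetric because \(A_k=\partial_\U F_k\)) are both symmetric in \((j,\ell)\). This leaves \(\mathcal H_e A_k\) symmetric. Your regularity remarks (\(C^2\) data and \cref{eq:energy-pair} holding on an open set of states) spell out assumptions the paper leaves implicit.
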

\begin{proof}
  In components, \cref{eq:energy-pair} is
  \(e_iA_{k,ij}=(g_k)_j\).  Differentiating with respect to \(\U_\ell\)
  and antisymmetrizing in \(j\) and \(\ell\) cancels the second
  derivatives of \(g_k\).  The differentiated \(A_k\)-terms also cancel
  because \(A_{k,ij}=\partial_{\U_j}F_{k,i}\).  Thus,
  \(e_{i\ell}A_{k,ij}=e_{ij}A_{k,i\ell}\), which is exactly
  \cref{eq:hessian-sym}.
\end{proof}

The Hessian \(\mathcal H_e(\U)\) is the natural perturbation-energy
metric: near \(\U_0\), the quadratic part of \(e(\U_0+\delta\U)\) is
\(\tfrac12\delta\U^\top\mathcal H_e(\U_0)\delta\U\).  In general,
however, \(e(\U)\ne\tfrac12\U^\top\mathcal H_e(\U)\U\).  Therefore, we
normalize the Hessian by a positive scalar so that the physical energy is
represented exactly by the resulting quadratic form.
\begin{definition}[Scalar-normalized physical energy metric]
  \label{def:h-metric}
  Assume that \(\mathcal H_e(\U)\) is SPD and \(e(\U)>0\) on the
  admissible state set.  Define
  \( q_{\mathcal H}(\U) := \tfrac12 \U^\top \mathcal H_e(\U)\U \).
  The physical energy metric is
  \begin{equation}
    H(\U)
    :=
    \frac{e(\U)}{q_{\mathcal H}(\U)}\,\mathcal H_e(\U)
    =
    \frac{2e(\U)}{\U^\top \mathcal H_e(\U)\U}\,\mathcal H_e(\U).
    \label{eq:h-metric}
  \end{equation}
  Then \(H(\U)\) is SPD as a positive scalar multiple of
  \(\mathcal{H}_e(\U)\) and satisfies \cref{eq:h-energy-density}.
\end{definition}

\begin{lemma}[\(H\)-symmetrization of the flux Jacobian]
  \label{lem:h-metric-sym}
  Let \(H(\U)\) be defined by
  \cref{eq:h-metric}. Then \(H(\U)\)
  symmetrizes the flux Jacobian:
  \begin{equation}
    H(\U)A_k(\U)=[H(\U)A_k(\U)]^{\top} .
    \label{eq:h-sym}
  \end{equation}
\end{lemma}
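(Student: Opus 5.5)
The proof reduces directly to \Cref{lem:hessian-sym} via the scalar normalization in \cref{eq:h-metric}. Write \(H(\U)=\sigma(\U)\,\mathcal H_e(\U)\) with the scalar factor
\[
\sigma(\U):=\frac{2e(\U)}{\U^\top\mathcal H_e(\U)\U}.
\]
Under the standing assumptions of \Cref{def:h-metric}, \(\mathcal H_e(\U)\) is SPD, so \(\U^\top\mathcal H_e(\U)\U>0\) for \(\U\neq0\), and \(e(\U)>0\). Hence \(\sigma(\U)\) is a well-defined positive real number on the admissible state set.

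The first step is to multiply by \(A_k(\U)\) and use that a scalar commutes with matrices:
\[
H(\U)A_k(\U)=\sigma(\U)\,\mathcal H_e(\U)A_k(\U).
\]

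The second step uses that transposition is linear and leaves scalars unchanged. Combining this with \cref{eq:hessian-sym} from \Cref{lem:hessian-sym} gives
\[
[H(\U)A_k(\U)]^\top=\sigma(\U)\,[\mathcal H_e(\U)A_k(\U)]^\top=\sigma(\U)\,\mathcal H_e(\U)A_k(\U)=H(\U)A_k(\U).
\]
This is \cref{eq:h-sym}, and it holds for each \(k\) separately.

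There is no real obstacle; the only point to check is which hypotheses are inherited. The statement implicitly assumes those of \Cref{lem:hessian-sym}: that \(A_k=\partial_\U F_k\) and that \cref{eq:energy-pair} holds. It also assumes those of \Cref{def:h-metric}, which make \(\sigma\) finite and positive. State-dependence of \(\sigma\) plays no role, because the identity is pointwise in \(\U\) and involves no derivatives of \(H\). Positivity of \(\sigma\) is not even needed for symmetry; only finiteness is. Positivity matters only for \(H\) being SPD, which \Cref{def:h-metric} already records.
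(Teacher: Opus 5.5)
Your proposal is correct and follows the paper's proof: the paper likewise observes that \(H(\U)\) is a positive scalar multiple of \(\mathcal H_e(\U)\), so the symmetry \cref{eq:hessian-sym} from \Cref{lem:hessian-sym} transfers directly to \cref{eq:h-sym}. Your added remarks are accurate: the result inherits the hypotheses of \Cref{lem:hessian-sym}, and only finiteness of the scalar factor, not its positivity, is needed for the symmetry.
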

\begin{proof}
  Since \(H(\U)\) is a positive scalar multiple of \(\mathcal{H}_e(\U)\),
  \cref{eq:hessian-sym} implies
  \cref{eq:h-sym}.
\end{proof}

\begin{table}[t]
  \centering
  \caption{Examples in which the physical energy is a nondegenerate
    convex entropy in the chosen state variables.  The last column
    lists the scalar normalization \(\lambda(\U)\).  Here, \(u\) and
    \(\mathbf u\) denote velocities, \(\mathbf m\) momentum, \(h\)
    depth, \(g\) gravitational acceleration, \(\rho\) density,
    \(\eta\) entropy density, and \(\kappa\) the ratio of specific
    heats.
    For the Euler equations, we set \( s=\frac{\eta}{\rho},
    p=\rho^\kappa \exp(s), \theta=\frac{p}{(\kappa-1)\rho^2}, \) and
    \(a=s-\kappa+1.\)}
  \label{tab:h-metrics}
  \resizebox{0.96\linewidth}{!}{%
    \begin{tabular}{@{}llll@{}}
      \toprule
      \begin{tabular}{l}
        System and \\ state variables
      \end{tabular}
      & \(e(\U)\) & \(\mathcal{H}_e(\U)=\nabla_\U^2 e(\U)\) &
      \(\lambda(\U):=\frac{2e(\U)}{\U^\top\mathcal{H}_e(\U)\U}\)
      \\ \midrule

      \begin{tabular}{l}
        Shallow water eq.\\ \(\U=(h,\mathbf m)\)
      \end{tabular}
      & \(\displaystyle \frac{|\mathbf m|^2}{2h} + \tfrac12 gh^2 \) &
      \(\displaystyle
      \begin{pmatrix}
        \dfrac{|\mathbf m|^2}{h^3}+g & -\dfrac{\mathbf m^{\top}}{h^2}
        \\[0.5em] -\dfrac{\mathbf m}{h^2} & \dfrac1h I_2
      \end{pmatrix}
      \) & \(\displaystyle 1+\frac{|\mathbf m|^2}{gh^3} \) \\[2.2em]

      \begin{tabular}{l}
        Ideal-gas Euler eq.\\ \(\U=(\rho,\mathbf m,\eta)\)
      \end{tabular}
      & \(\displaystyle \frac{|\mathbf m|^2}{2\rho} +
      \frac{p}{\kappa-1} \) & \(\displaystyle
      \begin{pmatrix}
        \dfrac{|\mathbf m|^2}{\rho^3} + \theta(a^2+\kappa-1) &
        -\dfrac{\mathbf m^{\top}}{\rho^2} & -\theta a \\[0.5em]
        -\dfrac{\mathbf m}{\rho^2} & \dfrac1\rho I_3 & \mathbf 0
        \\[0.5em] -\theta a & \mathbf 0^{\top} & \theta
      \end{pmatrix}
      \) & \(\displaystyle \frac{|\mathbf m|^2}
         {\kappa\rho^{\kappa+1}\exp(\eta/\rho)}
         +\frac{2}{\kappa(\kappa-1)} \) \\[3.0em]

           \bottomrule
    \end{tabular}%
  }
\end{table}

The construction depends on the choice of state variables. For the
ideal-gas Euler equations in conservative variables
\((\rho,\mathbf m,e)\), where \(\rho\) is the mass density and
\(\mathbf m\) is the momentum density, the Hessian of the physical
energy is degenerate because \(e\) itself is a state variable. In the
variables \((\rho,\mathbf m,\eta)\), where \(\eta\) is the entropy
density, the Hessian becomes nondegenerate. By
\Cref{def:h-metric}, the corresponding \(H(\U)\) is then SPD.
\Cref{tab:h-metrics} lists representative systems and state variables
for which this holds.

To derive the \(H\)-metric energy-compatibility identity, represent
each energy-flux component \(g_k(\U)\) as
\begin{equation}
  g_k(\U)=\tfrac12 \U^{\top} G_k(\U)\U ,
  \label{eq:energy-flux-form}
\end{equation}
where \(G_k(\U)\) is symmetric because the antisymmetric part does not
contribute to \(\U^{\top}G_k(\U)\U\).

\begin{lemma}[\(H\)-symmetry of the flux defect]
  \label{lem:flux-defect-h-sym}
  Let \(G_k(\U)\) be the symmetric matrix in
  \cref{eq:energy-flux-form}.  Define \(\Delta A_k(\U)\) by
  \( G_k(\U)=H(\U) [ A_k(\U)+\Delta A_k(\U) ] \).
  Then
  \begin{equation}
    H(\U)\Delta A_k(\U) = [H(\U)\Delta A_k(\U)]^{\top}.
    \label{eq:h-sym2}
  \end{equation}
\end{lemma}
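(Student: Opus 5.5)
The identity is a short algebraic consequence of two symmetry facts already available, and the plan is to isolate \(H\Delta A_k\) and check that each piece is symmetric. First, solve the defining relation for the defect term:
\[
H(\U)\Delta A_k(\U) = G_k(\U) - H(\U)A_k(\U).
\]
No inversion of \(H\) is needed. The right-hand side involves only \(G_k\) and the product \(HA_k\), so the identity reduces to the symmetry of these two matrices.

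The two ingredients are as follows.
\begin{enumerate}
  [label=(\roman*), leftmargin=1.5em, labelsep=0.4em, itemsep=0.3em,
    topsep=0.3em ]
\item \(G_k(\U)\) is symmetric by construction in \cref{eq:energy-flux-form}. The representation \(g_k=\tfrac12\U^\top G_k\U\) determines only the symmetric part of \(G_k\), and the paper fixes \(G_k\) to be that symmetric representative. I will state this convention explicitly: if some nonsymmetric \(G_k\) were given, replacing it by \(\tfrac12(G_k+G_k^\top)\) leaves \(g_k\) unchanged, and \(\Delta A_k\) is then defined with respect to this symmetric choice.
\item \(H(\U)A_k(\U)\) is symmetric by \Cref{lem:h-metric-sym}, i.e.\ \cref{eq:h-sym}. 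This in turn rests on \Cref{lem:hessian-sym} together with the positive scalar normalization in \Cref{def:h-metric}.
\end{enumerate}

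Taking transposes then gives
\[
(H\Delta A_k)^\top = G_k^\top - (HA_k)^\top = G_k - HA_k = H\Delta A_k,
\]
which is \cref{eq:h-sym2}.

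The only point needing care is the well-definedness of \(\Delta A_k\). Since \(H\) is SPD and hence invertible, \(\Delta A_k := H^{-1}G_k - A_k\) exists and is unique once the symmetric \(G_k\) is fixed. This justifies calling it ``the'' defect. It also means the claim concerns \(H\Delta A_k\) itself rather than \(\Delta A_k\), which in general is not symmetric.

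I do not expect a substantive obstacle. The lemma is really a bookkeeping statement that the energy-flux defect inherits the \(H\)-self-adjointness of \(A_k\). The one assumption to flag is that \Cref{lem:h-metric-sym} requires the conservative-flux and energy-pair hypotheses of \Cref{lem:hessian-sym}, so the lemma holds under the same standing assumptions.
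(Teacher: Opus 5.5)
Your proposal is correct and matches the paper's proof: write \(H\Delta A_k = G_k - HA_k\), then use the symmetry of \(G_k\) together with \cref{eq:h-sym} from \Cref{lem:h-metric-sym}. Your extra remarks on the well-definedness of \(\Delta A_k\) and on the inherited standing hypotheses are accurate, but the argument does not need them.
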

\begin{proof}
  Symmetry of \(G_k\) and \cref{eq:h-sym} gives \cref{eq:h-sym2}.
\end{proof}
Thus, \(H(\U)\) symmetrizes both \(A_k(\U)\) and \(\Delta A_k(\U)\).

\begin{proposition}[\(H\)-metric energy-compatibility identity]
  \label{prop:h-compat}
  Assume that the physical energy pair satisfies
  \cref{eq:energy-pair}.  Let \(H(\U)\) be defined by
  \Cref{def:h-metric}. Let \(G_k(\U)\) and
  \(\Delta A_k(\U)\) be as in \Cref{lem:flux-defect-h-sym}.
  Then the solution of \cref{eq:ql-system} satisfies
  \begin{equation}
    \U^{\top} H\Delta A_k\,\partial_{x_k}\U
    + \tfrac12 \U^{\top}
    \left[
      \partial_t H
      + \partial_{x_k}\{H(A_k+\Delta A_k)\}
    \right]\U
    =0 .
    \label{eq:h-compat}
  \end{equation}
\end{proposition}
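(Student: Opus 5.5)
Write $e=\tfrac12\U^\top H\U$ and $g_k=\tfrac12\U^\top G_k\U$ with $G_k=H(A_k+\Delta A_k)$, by \cref{eq:h-energy-density} and \cref{eq:energy-flux-form}. The starting point is the local energy conservation law \cref{eq:energy-conservation}. By \Cref{lem:energy-pair}, it holds along smooth solutions of \cref{eq:ql-system} because the energy pair satisfies \cref{eq:energy-pair}. We then expand both $\partial_t e$ and $\partial_{x_k}g_k$ with the product rule, treating the state-dependent matrices $H$ and $G_k$ as space-time fields evaluated along the solution.

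Concretely, symmetry of $H$ gives
\[
\partial_t e = \U^\top H\,\partial_t\U + \tfrac12\U^\top(\partial_t H)\U .
\]
Symmetry of $G_k$ gives
\[
\partial_{x_k} g_k = \U^\top G_k\,\partial_{x_k}\U + \tfrac12\U^\top(\partial_{x_k}G_k)\U .
\]
Next, substitute $\partial_t\U=-A_k\partial_{x_k}\U$ from \cref{eq:ql-system} into the first expression, so that
\[
\U^\top H\,\partial_t\U=-\U^\top HA_k\,\partial_{x_k}\U .
\]
In the second expression, write
\[
\U^\top G_k\,\partial_{x_k}\U=\U^\top HA_k\,\partial_{x_k}\U+\U^\top H\Delta A_k\,\partial_{x_k}\U .
\]
Adding the two expansions, the $\U^\top HA_k\partial_{x_k}\U$ terms cancel exactly. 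What remains is
\[
\U^\top H\Delta A_k\partial_{x_k}\U+\tfrac12\U^\top[\partial_t H+\partial_{x_k}\{H(A_k+\Delta A_k)\}]\U ,
\]
which equals $\partial_t e+\partial_{x_k}g_k=0$. This is \cref{eq:h-compat}.

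The only delicate point is bookkeeping of transposes in the product rule. The identity $\partial(\tfrac12\U^\top M\U)=\U^\top M\partial\U+\tfrac12\U^\top(\partial M)\U$ requires $M$ symmetric. This holds for $H$ by \Cref{def:h-metric} and for $G_k$ by construction in \cref{eq:energy-flux-form}. No symmetry of $HA_k$ or $H\Delta A_k$ individually is needed here, so \Cref{lem:h-metric-sym} and \Cref{lem:flux-defect-h-sym} are not invoked. I would also note that, unlike \Cref{lem:hessian-sym}, this argument needs only \cref{eq:energy-pair} and not a conservative flux, and that smoothness of the solution justifies the chain rule.
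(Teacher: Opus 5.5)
Your proposal is correct and follows the paper's proof. Like the paper, you expand the local energy law using $e=\tfrac12\U^\top H\U$ and $g_k=\tfrac12\U^\top G_k\U$, with the symmetry of $H$ and $G_k$ justifying the product rule, substitute $\partial_t\U=-A_k\partial_{x_k}\U$, and split $G_k=H(A_k+\Delta A_k)$ so that the $\U^\top HA_k\,\partial_{x_k}\U$ terms cancel.
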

\begin{proof}
  Expanding \cref{eq:energy-conservation}
  using \Cref{def:h-metric},
  \cref{eq:energy-flux-form}, and
  \cref{eq:ql-system} gives
  \(
    \U^{\top}(G_k-HA_k)\partial_{x_k}\U
    +\tfrac12 \U^{\top}[\partial_tH+\partial_{x_k}G_k]\U=0
  \).
  \(G_k(\U)=H(\U) [A_k(\U)+\Delta A_k(\U)]\)
  gives \cref{eq:h-compat}.
\end{proof}
This identity provides the continuous energy-compatibility identity,
Using it, we derive its Galerkin counterpart
after introducing notations and assumptions.

\section{Notation and assumptions in the Galerkin formulation}
\label{sec:notation}

This section introduces the Galerkin notation and assumptions used
in the subsequent sections.

\begin{definition}[Mode expansion and coefficient ordering]
  \label{def:vp-expansion}
  Let \(\U\) be the state variable in \cref{eq:ql-system}.  On an
  element \(\Omega\), let \(V^p:=V^p(\Omega)\) be the polynomial space of
  degree at most \(p\), and set \(N_p:=\dim V^p\).  Let
  \(I_m\in\mathbb R^{m\times m}\) and
  \(I:=I_{mN_p}\in\mathbb R^{mN_p\times mN_p}\).
  Let \(\{\phi_\alpha\}_{\alpha=1}^{N_p}\) be a time-independent
  \(L^2\)-orthonormal basis of \(V^p\), and define
  \(
    \Phi_\alpha(x):=\phi_\alpha(x)I_m\in\mathbb R^{m\times m},
    \Phi(x):=[\Phi_1(x)\ \cdots\ \Phi_{N_p}(x)]
    \in\mathbb R^{m\times mN_p}
  \).
  The \(p\)-mode truncation \(\Up\in(V^p)^m\) is written as
  \[
  \Up(x,t)=\sum_{\alpha=1}^{N_p}\Phi_\alpha(x)\,\UHATp_\alpha(t),
  \qquad \UHATp_\alpha(t)\in\mathbb{R}^m.
  \]
  Its coefficient vector is
  \(
    \UHATp(t):=
    [\UHATp_1(t)\ \cdots\ \UHATp_{N_p}(t)]^\top
    \in\mathbb R^{mN_p},
  \)
  ordered mode-major, with the \(m\) physical components grouped within
  each mode.
\end{definition}
\begin{definition}[Block transpose and symmetric/skew parts]
  \label{def:block-sym-skw}
  In mode-major order, matrices are viewed as \(N_p\times N_p\) block
  matrices with \(m\times m\) blocks.  For
  \(X=[X_{\alpha\beta}]_{\alpha,\beta=1}^{N_p}\), define
  \( (X^\top)_{\alpha\beta}:=(X_{\beta\alpha})^\top\),
  \( X_{\rm sym}:=\tfrac12(X+X^\top)\), and
  \(  X_{\rm skw}:=\tfrac12(X-X^\top) \).
\end{definition}
\begin{definition}[Discrete quadrature rules]
  \label{def:quadrature}
  For a scalar integrand \(g\), let \(Q_r[g]\) denote an \(r\)-exact
  volume quadrature with positive weights, and let
  \(Q_r^{\partial\Omega}[g]\) denote an \(r\)-exact face quadrature
  on \(\partial\Omega\) with positive weights.  For vector- or
  matrix-valued integrands, the same notation is applied componentwise.
\end{definition}
\begin{definition}[$L^2$ projections]
  \label{def:l2-h-proj}
  For an \(m\)-component function \(f\), the \(L^2\)-projection
  \(\Pi_{L^2,p}f\in(V^p)^m\) is defined by
  \(
    \int_\Omega \Phi^\top
    \bigl(f-\Pi_{L^2,p}f\bigr)\,dV = \mathbf 0
  \).
  If a quadrature rule \(Q\) is used, the corresponding discrete
  projection is denoted by \(\Pi_{L^2,p}^{\,Q}\).  For matrix-valued
  functions, both projections are applied componentwise.
\end{definition}

The error estimates are restricted to smooth regimes; nonsmooth regimes
requiring shock-capturing stabilization are outside the scope of the
analysis.
\begin{assumption}[Shape regularity]
  \label{ass:shape-regular}
  The element \(\Omega\) is convex and satisfies
  \(h/\rho\le C_{\rm sr}\), where \(h\) is its diameter, \(\rho\) is its
  inradius, and \(C_{\rm sr}>0\) is independent of \(h\).
\end{assumption}
\begin{assumption}[Admissibility and uniform bounds for the \(H\)-metric]
  \label{ass:h-bounds}
  All states under consideration take values in an admissible set
  \(\mathcal D\subset\mathbb R^m\).  On \(\mathcal D\), all eigenvalues
  of \(H(\U)\) lie in \([c_H,C_H]\) for constants
  \(0<c_H\le C_H<\infty\).
\end{assumption}
\begin{assumption}[Uniform bounds for \(\Up\) and its derivatives]
  \label{ass:up-bounds}
  The resolved state \(\Up\in(V^p)^m\) satisfies
  \(\|\Up\|_{L^\infty(\Omega)}\le C_0\),
  \( \|\nabla\Up\|_{L^\infty(\Omega)}\le C_1\),
  \( \|\partial_t\Up\|_{L^\infty(\Omega)}\le C_2 \)
  with constants independent of \(h\).
\end{assumption}
\begin{assumption}[Projection-error bounds for generated matrix fields]
  \label{ass:field-proj-error}
  The matrix fields generated from the resolved state \(\Up\) and the
  fixed coefficient direction \(V\), including
  \( H(\Up) \),\( H(\Up)A_k(\Up)\),\( H(\Up)\Delta A_k(\Up)\), and
  \(  D_{\Up} H(\Up)[\Phi V]
  \) are assumed to be well resolved up to degree \(p+1\) on each
  element.
\end{assumption}

%
\section{Reference \(H\)-metric energy structure}
\label{sec:i-eig}
This section clarifies the energy-exchange structure of the
infinite-mode exact-integration Galerkin system, which corresponds to
the continuous system.
The \(H\)-metric integration-by-parts and energy-compatibility identities
imply that, after \(H\)-orthogonalization, the volume contribution becomes
purely pairwise modal energy exchange.  Boundary contributions also cancel
pairwise across internal interfaces.
As a result, the energy balance closes on any target domain, with only
external boundary contributions remaining.

We first define the \(H\)-metric mass matrix and the associated operators.
\begin{definition}[$H$--metric operator notation]
  \label{def:h-operators}
  For a state field \(\U_\ast\), the \(H\)-metric mass matrix is
  \begin{align*}
    {\MH[\U_\ast]}_{\alpha\beta} &:= \textstyle{\int_\Omega} \Phi_\alpha^\top
    H(\U_\ast)\Phi_\beta\,dV .
  \end{align*}
  Let \(X_k(\U_\ast)\in\mathbb R^{m\times m}\) be a matrix-valued field
  depending on \(\U_\ast\).  The \(H\)-metric operators associated with
  \(X_k(\U_\ast)\) are  
  \begin{align*}
    \NHk[X(\U_\ast)]_{\alpha\beta}
    &:= \textstyle{\int_\Omega}
    \Phi_\alpha^\top H(\U_\ast)X_k(\U_\ast)
    \partial_{x_k}\Phi_\beta\,dV, \\
    \VHk[X(\U_\ast)]_{\alpha\beta}
    &:= \textstyle{\int_\Omega} \Phi_\alpha^\top
    \partial_{x_k}\!\left( H(\U_\ast)X_k(\U_\ast) \right)
    \Phi_\beta\,dV, \\
    \BHk[X(\U_\ast)]_{\alpha\beta}
    &:= \textstyle{\int_{\partial\Omega}} \Phi_\alpha^\top H(\U_\ast)X_k(\U_\ast)
    \Phi_\beta n_k\,dS .
  \end{align*}
  Set  \( \NH[X]:=\sum_k\NHk[X] \), \( \VH[X]:=\sum_k\VHk[X] \),
  and \( \BH[X]:=\sum_k\BHk[X]\).
\end{definition}

\subsection{Semi-discrete coefficient equation under the $H$--metric}
\label{subsec:h-weak-form}

We formulate the native \(p\)-Galerkin system and then take its
infinite-mode exact-integration limit as the reference system.
\begin{definition}[Native \(p\)-Galerkin system]
  \label{def:native-pg}
  Let \(\Up\in (V^p)^m\) be the state defined in
  \Cref{def:vp-expansion}.  By \Cref{def:h-operators},
  set \( \MH := \MH[\Up] \), \( \NH := \NH[A(\Up)] \),
  \( \VH := \VH[A(\Up)] \), and \( \BH := \BH[A(\Up)] \).
  Let \(\QHNUM_\alpha[\Up^-,\Up^+]\) be the boundary/interface numerical
  flux functional.
  Define
  \begin{equation}
    \mathcal R_p := \partial_t \Up + A_k(\Up)\partial_{x_k}\Up.
    \label{eq:residual-p}
  \end{equation}
  The \(H(\Up)\)-metric testing of \(\mathcal R_p\), with the boundary
  correction
  \(
    \QHNUM_\alpha[\Up^-,\Up^+]
    -\sum_{\beta=1}^{N_p}\BH_{\alpha\beta}\UHATp_\beta
  \),
  gives, for \(\alpha=1,\ldots,N_p\),
  \begin{equation}
    \sum_{\beta=1}^{N_p} \MH_{\alpha\beta}\dUHATp_\beta
    + \sum_{\beta=1}^{N_p} \NH_{\alpha\beta}\UHATp_\beta
    =
    \sum_{\beta=1}^{N_p} \BH_{\alpha\beta}\UHATp_\beta
    - \QHNUM_\alpha[\Up^-,\Up^+] .
    \label{eq:h-weak-pg}
  \end{equation}
\end{definition}
\begin{definition}[Infinite-mode exact-integration Galerkin (I--EIG) system]
  \label{def:i-eig}
  The I--EIG system is the \(p\to\infty\) limit of
  \cref{eq:h-weak-pg}, with \(\Up\) replaced by the state \(\U\).
\end{definition}

\begin{lemma}[Vanishing boundary correction in I--EIG]
  \label{lem:i-eig-no-trace}
  In the I--EIG system, the limiting boundary functional in
  \cref{eq:h-weak-pg} is consistent with the physical
  boundary trace and satisfies
  \(
    \QHNUM_\alpha =
    \sum_{\beta=1}^{\infty}\BH_{\alpha\beta}\UHAT_\beta 
  \).
  Consequently, the I--EIG coefficient equation has no right-hand
  side:
  \begin{equation}
    \sum_{\beta=1}^{\infty} \MH_{\alpha\beta}\dUHAT_\beta +
    \sum_{\beta=1}^{\infty} \NH_{\alpha\beta}\UHAT_\beta =0, \qquad
    \alpha\in\mathbb N .
    \label{eq:i-eig-ode}
  \end{equation}
\end{lemma}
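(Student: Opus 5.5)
The plan is to show that in the limit $p\to\infty$ the state has no truncation, so that the numerical flux, once it is consistent, reproduces the physical trace exactly. Then the boundary correction on the right-hand side of \cref{eq:h-weak-pg} cancels identically. The argument has three steps, carried out in this order.

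First, I identify the limiting state. Since $\{\phi_\alpha\}$ is an $L^2$-orthonormal basis that becomes complete as $p\to\infty$, we have $\Up\to\U$, and $\U=\sum_{\beta=1}^\infty\Phi_\beta\UHAT_\beta$. Under smoothness of the solution this convergence holds in a norm strong enough to pass to traces on $\partial\Omega$. Within each element the series then converges to the smooth trace. Across an interface, the I--EIG state is the continuous solution, so the two traces coincide: $\U^-=\U^+=\U|_{\partial\Omega}$.

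Second, I use consistency of the numerical flux. I interpret $\QHNUM_\alpha$ as the $H$-tested normal flux functional
\[
\int_{\partial\Omega}\Phi_\alpha^\top\,\mathcal F_H[\U^-,\U^+]\,dS ,
\]
whose consistency requirement is $\mathcal F_H[\U,\U]=H(\U)A_k(\U)\U\,n_k$. Because the traces agree, $\QHNUM_\alpha[\U,\U]$ equals $\int_{\partial\Omega}\Phi_\alpha^\top H(\U)A_k(\U)\U\,n_k\,dS$. I then insert the expansion of $\U$ and exchange the sum with the face integral. This is justified by boundedness of $H A_k$ on the admissible set (\Cref{ass:h-bounds}) together with trace convergence. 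The exchange gives $\sum_\beta\BH_{\alpha\beta}\UHAT_\beta$ by \Cref{def:h-operators}.

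Third, I substitute this equality into the limit of \cref{eq:h-weak-pg}. The right-hand side vanishes, which yields \cref{eq:i-eig-ode}.

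As a cross-check, \cref{eq:i-eig-ode} should be exactly the $H(\U)$-weighted testing of $\mathcal R=\partial_t\U+A_k\partial_{x_k}\U=0$ against each $\Phi_\alpha$. Exchanging the sums with the volume integrals in $\MH$ and $\NH$ recovers $\int\Phi_\alpha^\top H(\partial_t\U+A_k\partial_{x_k}\U)\,dV=0$, which holds since $\U$ solves \cref{eq:ql-system}.

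The main obstacle is the first step. Termwise interchange of the infinite sums with the face integrals needs more than $L^2$ convergence. I would first try to assume $\U(\cdot,t)$ is smooth on $\overline\Omega$ (the paper restricts to smooth regimes), so that the orthonormal expansion converges in $H^1(\Omega)$. The trace theorem then gives $L^2(\partial\Omega)$ convergence of $\Up|_{\partial\Omega}$. Continuity of $H(\cdot)A_k(\cdot)$ on $\mathcal D$ and dominated convergence would then close the argument. The consistency of $\QHNUM$ itself I would state as the defining property of an admissible boundary functional rather than prove.
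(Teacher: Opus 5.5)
Your proposal is correct and follows the paper's argument: in the I--EIG limit the state is the exact solution, so the interface traces are single-valued, the consistent boundary functional reduces to \(\sum_\beta \BH_{\alpha\beta}\UHAT_\beta\), and the right-hand side of \cref{eq:h-weak-pg} vanishes. Your trace-convergence justification for exchanging the series with the face integral is extra rigor that the paper omits. One small correction there: boundedness of \(H A_k\) along the solution comes from continuity of \(A_k\) on the compact range of the smooth solution, not from \Cref{ass:h-bounds}, which only bounds \(H\).
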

\begin{proof}
  In the I--EIG limit, \(\Up\to\U\) and
  \(\partial_t\U+A_k(\U)\partial_{x_k}\U=0\) holds pointwise.  Thus, the
  boundary trace is single-valued across each internal face, and the
  boundary functional reduces to the physical boundary functional:
  \(
    \QHNUM_\alpha =
    \sum_{\beta=1}^{\infty}\BH_{\alpha\beta}\UHAT_\beta
  \).
  Substitution into the limiting form of \cref{eq:h-weak-pg} gives
  \cref{eq:i-eig-ode}.
\end{proof}

\subsection{Two identities in the I--EIG system}
The I--EIG system satisfies the following two identities.
\begin{lemma}[$H$--metric integration-by-parts ($H$--IBP) identity]
  \label{lem:h-ibp}
  For each direction \(k\), if \(H X_k=(H X_k)^\top\), then
  \(
  \NHk[X]+\NHk[X]^\top = \BHk[X]-\VHk[X]
  \).
  Summing over \(k\) gives
  \(
  \NH[X]+\NH[X]^\top = \BH[X]-\VH[X]
  \).
  In particular, with \( \NH:=\NH[A],\ \NHD:=\NH[\Delta A] \) and the
  analogous notation for \(\VH,\BH\) and \(\VHD,\BHD\), the corresponding
  identities are  
  \begin{align}
    &\NH+\NH^\top=\BH-\VH,
    \label{eq:h-ibp-H}\\
    &\NHD+\NHD^\top=\BHD-\VHD.
    \label{eq:h-ibp-delta}
  \end{align}
\end{lemma}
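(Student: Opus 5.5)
The plan is to apply the divergence theorem blockwise to the symmetric matrix field \(S_k:=H(\U_\ast)X_k(\U_\ast)\), which is symmetric by hypothesis. Fix a direction \(k\) and a pair of modes \((\alpha,\beta)\), and consider the scalar-block product \(\Phi_\alpha^\top S_k\Phi_\beta\). Since \(\Phi_\alpha=\phi_\alpha I_m\), this product equals \(\phi_\alpha\phi_\beta S_k\). The product rule then gives
\[
\partial_{x_k}\bigl(\Phi_\alpha^\top S_k\Phi_\beta\bigr)
=(\partial_{x_k}\Phi_\alpha)^\top S_k\Phi_\beta
+\Phi_\alpha^\top(\partial_{x_k}S_k)\Phi_\beta
+\Phi_\alpha^\top S_k\,\partial_{x_k}\Phi_\beta .
\]
Integrating over \(\Omega\) and applying the divergence theorem componentwise turns the left side into \(\BHk[X]_{\alpha\beta}\). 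On the right, the second term is \(\VHk[X]_{\alpha\beta}\) and the third is \(\NHk[X]_{\alpha\beta}\).

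The key step is to identify the first term as \((\NHk[X]^\top)_{\alpha\beta}\) in the block-transpose sense of \Cref{def:block-sym-skw}. By definition,
\[
(\NHk^\top)_{\alpha\beta}=(\NHk_{\beta\alpha})^\top
=\int_\Omega\bigl(\Phi_\beta^\top S_k\,\partial_{x_k}\Phi_\alpha\bigr)^\top dV
=\int_\Omega(\partial_{x_k}\Phi_\alpha)^\top S_k^\top\Phi_\beta\,dV .
\]
Using \(S_k^\top=S_k\), this equals the first term. This is the only place the symmetry hypothesis enters, and it is where care is needed: \(\Phi_\alpha\) is a scalar multiple of the identity, so \(\Phi_\alpha\) commutes with \(S_k\) and the transposes of \(\Phi\) are harmless. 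Rearranging then gives \(\NHk[X]+\NHk[X]^\top=\BHk[X]-\VHk[X]\).

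Summing over \(k\) is linear, so it gives the summed identity for \(\NH[X]\). For the two specializations, I would invoke \Cref{lem:h-metric-sym} to get the symmetry of \(HA_k\) and apply the identity with \(X=A\), which yields \cref{eq:h-ibp-H}. I would invoke \Cref{lem:flux-defect-h-sym} to get the symmetry of \(H\Delta A_k\) and apply it with \(X=\Delta A\), which yields \cref{eq:h-ibp-delta}.

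The main obstacle is mostly bookkeeping: the block transpose must be handled correctly, as above. One must also assume enough regularity that the fields \(\U_\ast\) are smooth on \(\Omega\), so the divergence theorem applies with exact integration. In the I--EIG setting this holds because integrals are exact and the state is smooth in the element.
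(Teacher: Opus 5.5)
Your proposal is correct and is essentially the paper's argument: integrate \(\partial_{x_k}(\Phi_\alpha^\top H X_k\Phi_\beta)\) over \(\Omega\), identify the term \(\int_\Omega(\partial_{x_k}\Phi_\alpha)^\top HX_k\Phi_\beta\,dV\) with the block transpose of \(N_H^{(k)}[X]\) using \(HX_k=(HX_k)^\top\), sum over \(k\), and specialize to \(X=A\) and \(X=\Delta A\) via \Cref{lem:h-metric-sym,lem:flux-defect-h-sym}. Your remark that \(\Phi_\alpha=\phi_\alpha I_m\) commutes with \(S_k\) is harmless but not needed, because the identity \((\Phi_\beta^\top S_k\,\partial_{x_k}\Phi_\alpha)^\top=(\partial_{x_k}\Phi_\alpha)^\top S_k^\top\Phi_\beta\) holds for any matrix-valued basis.
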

\begin{proof}
  Applying integration by parts to
  \(\partial_{x_k}(\Phi_\alpha^\top H X_k\Phi_\beta)\) and using
  \(H X_k=(H X_k)^\top\) gives the direction-wise identity. Summing over
  \(k\) gives the summed identity. Applying this identity to
  \(X_k=A_k\) and \(X_k=\Delta A_k\), using
  \Cref{lem:h-metric-sym,lem:flux-defect-h-sym}, gives
  \cref{eq:h-ibp-H,eq:h-ibp-delta}.
\end{proof}

\begin{lemma}[Energy-compatibility identity in I--EIG]
  \label{lem:i-eig-compat}
  In the I--EIG system, the energy-compatibility identity
  holds:
  \begin{equation}
    \UHAT^\top \bigl( \dMH+\VH+\BHD \bigr) \UHAT = 0 .    
    \label{eq:i-eig-compat}
  \end{equation}
\end{lemma}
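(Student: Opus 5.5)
The plan is to integrate the pointwise identity \cref{eq:h-compat} of \Cref{prop:h-compat} over the element \(\Omega\). Each resulting term will be matched with one of the matrices \(\dMH\), \(\VH\), \(\BHD\) acting on the infinite coefficient vector \(\UHAT\). In the I--EIG setting the state is represented exactly by \(\U=\Phi\UHAT=\sum_\beta\Phi_\beta\UHAT_\beta\), and all integrals are exact. Hence any quadratic expression \(\int_\Omega \U^\top Y\U\,dV\) equals \(\UHAT^\top[\int_\Omega\Phi^\top Y\Phi\,dV]\UHAT\). Here \(\dMH\) is read as the time derivative of \(\MH[\U]\) with the basis held fixed: \(\dMH_{\alpha\beta}=\int_\Omega\Phi_\alpha^\top(\partial_tH(\U))\Phi_\beta\,dV\). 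This is legitimate because the \(\phi_\alpha\) are time-independent.

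\emph{Step 1.} Integrate \cref{eq:h-compat} over \(\Omega\) and split the bracket into three parts. The \(\partial_tH\) term gives \(\tfrac12\UHAT^\top\dMH\UHAT\) directly. The term \(\tfrac12\U^\top\partial_{x_k}(HA_k)\U\) gives \(\tfrac12\UHAT^\top\VH\UHAT\) by \Cref{def:h-operators}. The remaining pieces are \(\tfrac12\U^\top\partial_{x_k}(H\Delta A_k)\U\) and \(\U^\top H\Delta A_k\partial_{x_k}\U\). These give \(\tfrac12\UHAT^\top\VHD\UHAT\) and \(\UHAT^\top\NHD\UHAT\), respectively.

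\emph{Step 2.} Use the \(H\)--IBP identity \cref{eq:h-ibp-delta} of \Cref{lem:h-ibp}. Since the quadratic form only sees the symmetric part, \(\UHAT^\top\NHD\UHAT=\tfrac12\UHAT^\top(\NHD+\NHD^\top)\UHAT=\tfrac12\UHAT^\top(\BHD-\VHD)\UHAT\). The \(\VHD\) contributions then cancel, and the integrated identity becomes \(\tfrac12\UHAT^\top(\dMH+\VH+\BHD)\UHAT=0\). Multiplying by two gives \cref{eq:i-eig-compat}.

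\emph{Main obstacle.} The delicate step is justifying the passage from pointwise fields to infinite coefficient sums. This includes the exchange of \(\sum_\beta\) with the integral, the convergence of \(\UHAT^\top X\UHAT\) for the infinite block matrices, and the identification of \(\dMH\) with \(\int\Phi^\top\partial_tH\,\Phi\). I would handle this by working at the level of the exact-integration definition of I--EIG (\Cref{def:i-eig}). There the expansion of \(\U\) converges in \(L^2(\Omega)\) and, for smooth \(\U\), in \(H^1\) with traces. The quadratic forms are then continuous bilinear forms evaluated on \(\U\), so each matrix quadratic form equals the corresponding field integral. A secondary point is that \cref{eq:h-compat} must hold along the solution. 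This holds because, by the proof of \Cref{lem:i-eig-no-trace}, \(\U\) satisfies \cref{eq:ql-system} pointwise in the I--EIG limit.
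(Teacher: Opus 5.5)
Your proposal is correct and is essentially the paper's proof: integrate \cref{eq:h-compat} over \(\Omega\) to obtain the \(N_{H,\Delta}\), \(\dot M_H\), \(V_H\), \(V_{H,\Delta}\) quadratic forms, symmetrize the \(N_{H,\Delta}\) term, and apply \cref{eq:h-ibp-delta} so that the \(V_{H,\Delta}\) contributions cancel. Your remarks on the infinite-sum/integral interchange go beyond the paper, which simply substitutes the expansion and integrates, but they do not change the argument.
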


\begin{proof}
  In the I--EIG system, the volume residual
  \(\partial_t \U+A_k(\U)\partial_{x_k}\U\) vanishes pointwise, so
  \Cref{prop:h-compat} applies.  Substituting
  \(\U=\sum_{\alpha=1}^{\infty}\Phi_\alpha\UHAT_\alpha\) into
  \cref{eq:h-compat} and integrating over \(\Omega\) gives
  \(
    \UHAT^\top\NHD\UHAT
    + \tfrac12\UHAT^\top
    \bigl(\dMH+\VH+\VHD\bigr)\UHAT =0
  \).
  Since
  \(
    \UHAT^\top\NHD\UHAT
    =
    \tfrac12\UHAT^\top
    \bigl(\NHD+\NHD^\top\bigr)\UHAT
  \),
  \cref{eq:h-ibp-delta} gives \cref{eq:i-eig-compat}.
\end{proof}

\subsection{Physical energy exchange structure}
\label{subsec:h-orth}
To clarify the modal exchange structure implied by the two identities,
we introduce an orthogonalization with respect to the \(H\)-metric
based on the Cholesky factorization.
\begin{definition}[$H$-orthogonalization]
  \label{def:h-orth}
  Let \(R=R(t)\) be the upper triangular Cholesky factor of \(\MH\),
  so that \( \MH=R^\top R \).  Define \(\UHATO := R\UHAT\) and
  \(X^{(o)} := R^{-\top}XR^{-1}\).  Define the Cholesky-frame
  connection \(S\) by \( \dot R = S R \).  Equivalently, \( S=\dot R
  R^{-1} \).
\end{definition}

The Cholesky frame evolves with the state-dependent mass matrix as
follows.
\begin{lemma}[Cholesky-frame connection identities]
  \label{lem:chol-connection}
  Let \(\dMHO:=R^{-\top}\dMH R^{-1}\).  Then
  \begin{equation}
    S+S^\top=\dMHO .
    \label{eq:chol-connection-sym}
  \end{equation}
  Equivalently,
  \(S=\operatorname{triu}(\dMHO)-\tfrac12\operatorname{diag}(\dMHO)\),
  where \(\operatorname{triu}\) keeps the upper triangular part including
  the diagonal, and \(\operatorname{diag}\) keeps only the diagonal part.
  The coefficient derivatives satisfy
  \begin{equation}
    \dUHATO = R\dot\UHAT + S\UHATO .
    \label{eq:uhato-velocity}
  \end{equation}
\end{lemma}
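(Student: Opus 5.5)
The plan is to differentiate the Cholesky factorization \(\MH=R^\top R\) in time and then use the triangular structure of \(R\) to identify \(S\). By the product rule, \(\dMH=\dot R^\top R+R^\top\dot R\). Substituting the definition \(\dot R=SR\) of the Cholesky-frame connection (\Cref{def:h-orth}) gives
\[
\dMH=R^\top S^\top R+R^\top S R = R^\top (S+S^\top) R .
\]
Multiplying on the left by \(R^{-\top}\) and on the right by \(R^{-1}\), which exist because \(\MH\) is SPD under \Cref{ass:h-bounds}, yields \(S+S^\top=R^{-\top}\dMH R^{-1}=\dMHO\). This is \cref{eq:chol-connection-sym}.

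For the explicit formula, the key observation is that \(S=\dot R R^{-1}\) is upper triangular, since \(R\), \(\dot R\) and \(R^{-1}\) are all upper triangular. We assume differentiability of \(R\) in time; this follows from smoothness of \(\MH\) because the Cholesky factor is a smooth function of an SPD matrix. An upper triangular \(S\) whose symmetric sum \(S+S^\top=\dMHO\) is prescribed is uniquely determined. Its strictly upper part equals the strictly upper part of \(\dMHO\), since \(S^\top\) contributes nothing there. Its diagonal satisfies \(2\,\mathrm{diag}(S)=\mathrm{diag}(\dMHO)\). Combining the two gives \(S=\operatorname{triu}(\dMHO)-\tfrac12\operatorname{diag}(\dMHO)\). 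The triangularity argument is the only nonroutine point, and it is elementary. One subtlety: if the triangular structure is meant at the scalar level rather than in \(m\times m\) blocks, the statement should be read with scalar entries. The standard Cholesky factor is scalar upper triangular, so the argument holds entrywise.

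Finally, differentiating \(\UHATO=R\UHAT\) gives
\[
\dUHATO=\dot R\UHAT+R\dot\UHAT=SR\UHAT+R\dot\UHAT=R\dot\UHAT+S\UHATO,
\]
which is \cref{eq:uhato-velocity}.

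In the I--EIG setting the index ranges are infinite. For that case I would either argue on finite truncations and pass to the limit, or simply treat the identities formally as in the preceding lemmas, since every step above is purely algebraic.
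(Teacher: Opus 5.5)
Your proposal is correct and matches the paper's proof. The paper likewise differentiates \(\MH=R^\top R\) with \(\dot R=SR\) to obtain \(S+S^\top=\dMHO\), uses the upper triangularity of \(S=\dot R R^{-1}\) for the explicit form, and differentiates \(\UHATO=R\UHAT\). Your remark that \(\operatorname{triu}\) and \(\operatorname{diag}\) must be read entrywise, following the scalar Cholesky structure, rather than in \(m\times m\) blocks is a correct clarification that the paper leaves implicit.
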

\begin{proof}
  Differentiating \(\MH=R^\top R\) and using
  \(S=\dot R R^{-1}\) gives
  \cref{eq:chol-connection-sym}.
  Since \(S\) is upper triangular, the equivalent form follows.
  Differentiating \(\UHATO=R\UHAT\) gives
  \cref{eq:uhato-velocity}.
\end{proof}

The two identities take the following form
in the \(H\)-orthogonalized variables.
\begin{lemma}[Orthogonalized \(H\)--IBP and energy-compatibility identities]
  \label{lem:h-orth-identities}
  Under the \(H\)-orthogonalization in \Cref{def:h-orth},
  the \(H\)--IBP identity becomes
  \begin{equation}
    \NHO+\NHO^\top=\BHO-\VHO .
    \label{eq:h-ibp-orth}
  \end{equation}
  The energy-compatibility identity
  \cref{eq:i-eig-compat} becomes
  \begin{equation}
    \UHATO{}^\top C_H \UHATO =0,
    \label{eq:h-compat-orth}
  \end{equation}
  where
  \begin{equation}
    C_H := \tfrac12\left(S+S^\top+\VHO+\BHOD\right) .
    \label{eq:ch-orth}
  \end{equation}
\end{lemma}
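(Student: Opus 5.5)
The plan is a congruence transformation of the two identities already established in the original coefficient frame. The transformation is \(X\mapsto X^{(o)}=R^{-\top}XR^{-1}\) from \Cref{def:h-orth}. The frame-derivative term is then rewritten with the Cholesky-frame connection of \Cref{lem:chol-connection}. No new analysis is needed; both statements are algebraic consequences of \cref{eq:h-ibp-H}, \cref{eq:i-eig-compat}, and \cref{eq:chol-connection-sym}.

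For \cref{eq:h-ibp-orth}, I multiply \cref{eq:h-ibp-H} from the left by \(R^{-\top}\) and from the right by \(R^{-1}\). The map \(X\mapsto R^{-\top}XR^{-1}\) is linear. It also commutes with the block transpose of \Cref{def:block-sym-skw}, since \((R^{-\top}XR^{-1})^\top=R^{-\top}X^\top R^{-1}\). Hence \(R^{-\top}(\NH+\NH^\top)R^{-1}=\NHO+\NHO^\top\), and the right-hand side becomes \(\BHO-\VHO\). This requires only that \(R\) be invertible, which holds because \(\MH\) is SPD. Positive definiteness in turn follows from \Cref{ass:h-bounds} and the orthonormality of \(\{\phi_\alpha\}\).

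For \cref{eq:h-compat-orth}, I substitute \(\UHAT=R^{-1}\UHATO\) into \cref{eq:i-eig-compat}. This gives \(\UHATO{}^\top R^{-\top}(\dMH+\VH+\BHD)R^{-1}\UHATO=0\), that is, \(\UHATO{}^\top(\dMHO+\VHO+\BHOD)\UHATO=0\), where \(\dMHO\) is as in \Cref{lem:chol-connection}. By \cref{eq:chol-connection-sym} I replace \(\dMHO\) with \(S+S^\top\). Dividing by \(2\) then yields exactly \(\UHATO{}^\top C_H\UHATO=0\) with \(C_H\) as in \cref{eq:ch-orth}.

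The only delicate point is the infinite-mode setting of I--EIG. There \(\MH\), \(R\), and the operators are infinite block matrices, so the Cholesky factorization, the inverse \(R^{-1}\), and the manipulations of the quadratic forms must be justified. I would handle this in one of two ways. The first is to interpret everything for the truncated leading \(n\times n\) sections: since \(R\) is upper triangular, its leading section is the Cholesky factor of the leading section of \(\MH\). The second is to assume, as the paper implicitly does in \Cref{def:i-eig}, that the limiting operators act boundedly on the coefficient sequence of the smooth state. Under either interpretation, the formal algebra above is exactly the argument.
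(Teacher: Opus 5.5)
Your proposal is correct and is the paper's own proof: the congruence \(X\mapsto R^{-\top}XR^{-1}\) applied to \cref{eq:h-ibp-H}, then the substitution \(\UHAT=R^{-1}\UHATO\) in \cref{eq:i-eig-compat} together with \(\dMHO=S+S^\top\) from \cref{eq:chol-connection-sym}. Your infinite-mode discussion goes beyond the paper, which treats I--EIG formally, and it needs one correction: the leading-section route justifies \cref{eq:h-ibp-orth} entrywise, since each entry of \(R^{-\top}XR^{-1}\) is a finite sum because \(R^{-1}\) is upper triangular, but it does not justify the quadratic-form identity, because \((R\UHAT)_\alpha=\sum_{\beta\ge\alpha}R_{\alpha\beta}\UHAT_\beta\) is an infinite sum, so for that identity you need your boundedness interpretation.
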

\begin{proof}
  Applying the congruence transformation
  \(X^{(o)}=R^{-\top}XR^{-1}\) to
  \cref{eq:h-ibp-H} gives \cref{eq:h-ibp-orth}.
  Substituting \(\UHAT=R^{-1}\UHATO\) and
  \cref{eq:chol-connection-sym} into
  \cref{eq:i-eig-compat} gives
  \cref{eq:h-compat-orth}, with \(C_H\).
\end{proof}

\Cref{eq:h-compat-orth} says that \(C_H\UHATO\) is orthogonal
to \(\UHATO\).  Hence, its action on \(\UHATO\) can be represented by an
antisymmetric matrix.
\begin{lemma}[Antisymmetric lift of the energy-compatibility identity]
  \label{lem:skew-lift}
  Let \(C_H\) be defined by \cref{eq:ch-orth}, and assume
  \(\UHATO\ne0\).  Define
  \begin{equation}
    K_H
    :=
    \frac{
      C_H\UHATO(\UHATO)^\top
      -
      \UHATO(\UHATO)^\top C_H
    }{
      \lVert\UHATO\rVert^2
    }.
    \label{eq:kh-lift}
  \end{equation}
  Then \(K_H^\top=-K_H\) and \(K_H\UHATO=C_H\UHATO\).
\end{lemma}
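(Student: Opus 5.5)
The plan is a direct algebraic verification with two parts: antisymmetry of \(K_H\), and the action of \(K_H\) on \(\UHATO\). Throughout, write \(w:=\UHATO\ne0\), so \(\lVert w\rVert^2=w^\top w>0\) and the denominator in \cref{eq:kh-lift} is a positive scalar. We first note that \(C_H\) in \cref{eq:ch-orth} is symmetric. Indeed, \(S+S^\top\) is symmetric. The matrix \(\VHO=R^{-\top}\VH R^{-1}\) is a congruence of \(\VH\), whose blocks are \(\int_\Omega\Phi_\alpha^\top\partial_{x_k}(HA_k)\Phi_\beta\,dV\); since \(HA_k\) is symmetric by \Cref{lem:h-metric-sym}, its derivative is symmetric, and hence \(\VH\) is symmetric in the block sense of \Cref{def:block-sym-skw}. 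Likewise \(\BHOD\) is a congruence of \(\BHD\), which is symmetric because \(H\Delta A_k\) is symmetric by \Cref{lem:flux-defect-h-sym}. Symmetry of \(C_H\) is in fact not essential for the argument, but it keeps the computation clean.

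\textbf{Antisymmetry.} Transposing the numerator of \(K_H\) gives
\[
\bigl(C_Hww^\top-ww^\top C_H\bigr)^\top = ww^\top C_H^\top - C_H^\top ww^\top .
\]
With \(C_H^\top=C_H\), this is exactly minus the original numerator, so \(K_H^\top=-K_H\). If one prefers not to invoke symmetry, the same cancellation happens once \(C_H\) is replaced by its symmetric part in \cref{eq:kh-lift}. Only the symmetric part matters for the quadratic form in \cref{eq:h-compat-orth}, so this replacement does not affect the second part of the argument.

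\textbf{Action on \(w\).} We compute
\[
K_Hw=\frac{C_Hw\,(w^\top w)-w\,(w^\top C_H w)}{\lVert w\rVert^2}.
\]
By \Cref{lem:h-orth-identities}, equation \cref{eq:h-compat-orth} gives \(w^\top C_Hw=0\), so the second term vanishes. The first term equals \(C_Hw\), which is the claim.

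The only step needing care is the symmetry of \(C_H\), specifically confirming that \(\VH\) and \(\BHD\) are block-symmetric. This follows from the pointwise symmetry of \(HA_k\) and \(H\Delta A_k\) together with the fact that the \(\Phi_\alpha\) are scalar multiples of \(I_m\), so each block is just a weighted integral of a symmetric matrix field and is preserved under the block transpose. Everything else is a one-line computation.
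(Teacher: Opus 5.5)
Your proof is correct and follows the paper's route: both rely on \(C_H=C_H^\top\) and \(\UHATO{}^\top C_H\UHATO=0\) from \Cref{lem:h-orth-identities}, followed by direct substitution into \cref{eq:kh-lift}, and your explicit check that \(\VH\) and \(\BHD\) are symmetric (via \Cref{lem:h-metric-sym,lem:flux-defect-h-sym} and \(\Phi_\alpha=\phi_\alpha I_m\)) fills in a step the paper leaves implicit. One correction to your aside: symmetry is essential for the formula as written, because replacing \(C_H\) by its symmetric part gives \(K_H\UHATO=(C_H)_{\rm sym}\UHATO\), not \(C_H\UHATO\) (a nonsymmetric \(C_H\) would need \(\UHATO(\UHATO)^\top C_H^\top\) in the second term), but since you do verify symmetry, your argument is unaffected.
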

\begin{proof}
  By \Cref{lem:h-orth-identities},
  \(C_H=C_H^\top\) and
  \(\UHATO{}^\top C_H\UHATO=0\).
  Substitution into \cref{eq:kh-lift} gives both identities.
\end{proof}
The lift is not unique: any antisymmetric \(Z\) with \(Z\UHATO=0\) can be
added to \(K_H\).  We omit this freedom because it has
no effect on the coefficient equation.
This antisymmetric lift is the key
step for deriving the pairwise modal energy exchange and,
consequently, total energy conservation.
\begin{theorem}[Equations of orthogonalized coefficients and modal energy]
  \label{thm:i-eig-exchange}
  Let \(K_H\) be defined by \cref{eq:kh-lift}. Set
  \(
    \BHOG :=\BHO+\BHOD.
  \)
  Define \(E_\alpha := \tfrac12\|\UHATO_\alpha\|_2^2\).  Then for
  any \(\alpha \in \mathbb{N}\),
  the orthogonalized coefficient vector and the modal energy satisfy
  \begin{align}
    \dUHATO_\alpha ={}& -\sum_{\beta=1}^{\infty} \bigl(
    \NHOSKW-S_{\mathrm{skw}}-K_H \bigr)_{\alpha\beta}\UHATO_\beta
    -\tfrac12\sum_{\beta=1}^{\infty} \BHOG_{\alpha\beta}\UHATO_\beta,
    \label{eq:i-eig-state}\\
    \dot E_\alpha ={}& \sum_{\beta=1}^{\infty} P_{\alpha\beta} -
    \tfrac12\sum_{\beta=1}^{\infty} \UHATO_\alpha{}^\top
    \BHOG_{\alpha\beta}\UHATO_\beta,
    \label{eq:i-eig-mode-energy}
  \end{align}
  where \( P_{\alpha\beta} := - \UHATO_\alpha{}^\top \bigl(
  \NHOSKW-S_{\mathrm{skw}}-K_H \bigr)_{\alpha\beta} \UHATO_\beta \).
  Moreover, \(P_{\alpha\beta}=-P_{\beta\alpha} \) and \(
  \sum_{\alpha,\beta=1}^{\infty}P_{\alpha\beta}=0 \).  Consequently,
  the total energy \(E:=\sum_{\alpha=1}^{\infty}E_\alpha\) satisfies
  \begin{align}
    \dot E &= -\tfrac12\sum_{\alpha=1}^{\infty}\sum_{\beta=1}^{\infty}
    \UHATO_\alpha{}^\top \BHOG_{\alpha\beta} \UHATO_\beta .
    \label{eq:i-eig-total-energy}
  \end{align}
\end{theorem}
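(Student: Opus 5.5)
The plan is to rewrite the I--EIG coefficient equation \cref{eq:i-eig-ode} in the Cholesky frame of \Cref{def:h-orth}. Then I split every operator into its symmetric and skew parts in the block sense of \Cref{def:block-sym-skw}. The symmetric parts are identified using \Cref{lem:chol-connection}, \Cref{lem:h-orth-identities} and \Cref{lem:skew-lift}. As in \Cref{lem:skew-lift}, I assume \(\UHATO\neq0\); otherwise every quantity vanishes trivially.

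\emph{Step 1 (orthogonalized equation).} From \cref{eq:uhato-velocity}, \(\dot\UHAT=R^{-1}(\dUHATO-S\UHATO)\), and \(\UHAT=R^{-1}\UHATO\). Substituting these into \(\MH\dot\UHAT+\NH\UHAT=0\) and multiplying on the left by \(R^{-\top}\) uses \(R^{-\top}\MH R^{-1}=I\). This gives
\[
\dUHATO = S\UHATO-\NHO\UHATO .
\]

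\emph{Step 2 (splitting into skew and symmetric parts).} I write \(S=S_{\mathrm{skw}}+\tfrac12(S+S^\top)\) and \(\NHO=\NHOSKW+\tfrac12(\NHO+\NHO^\top)\). By \cref{eq:h-ibp-orth}, \(\tfrac12(\NHO+\NHO^\top)=\tfrac12(\BHO-\VHO)\). Hence
\[
\dUHATO=-(\NHOSKW-S_{\mathrm{skw}})\UHATO+\tfrac12(S+S^\top+\VHO)\UHATO-\tfrac12\BHO\UHATO .
\]
By \cref{eq:ch-orth}, \(\tfrac12(S+S^\top+\VHO)=C_H-\tfrac12\BHOD\). \Cref{lem:skew-lift} then lets me replace \(C_H\UHATO\) by \(K_H\UHATO\). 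Collecting the two boundary terms into \(\BHOG=\BHO+\BHOD\) and reading off block row \(\alpha\) gives \cref{eq:i-eig-state}.

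\emph{Step 3 (modal energy and antisymmetry of \(P\)).} Since \(\dot E_\alpha=\UHATO_\alpha{}^\top\dUHATO_\alpha\), taking the inner product of \cref{eq:i-eig-state} with \(\UHATO_\alpha\) yields \cref{eq:i-eig-mode-energy} with the stated \(P_{\alpha\beta}\). Set \(X:=\NHOSKW-S_{\mathrm{skw}}-K_H\). This matrix is antisymmetric, because each of its three terms is: the first two by construction and \(K_H\) by \Cref{lem:skew-lift}. In blocks this reads \(X_{\beta\alpha}=-(X_{\alpha\beta})^\top\). Therefore
\[
P_{\beta\alpha}=-\UHATO_\beta{}^\top X_{\beta\alpha}\UHATO_\alpha=\UHATO_\alpha{}^\top X_{\alpha\beta}\UHATO_\beta=-P_{\alpha\beta}.
\]
Summing over \(\alpha,\beta\) gives \(\sum P_{\alpha\beta}=0\), and summing \cref{eq:i-eig-mode-energy} over \(\alpha\) gives \cref{eq:i-eig-total-energy}.

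The algebra is routine. The main obstacle I expect is justifying these manipulations in the infinite-mode setting. The Cholesky factorization, the congruences \(X^{(o)}=R^{-\top}XR^{-1}\), and the rearrangement of the double sum \(\sum_{\alpha,\beta}P_{\alpha\beta}\) all require absolute convergence. I would treat the I--EIG system, like \Cref{def:i-eig}, as the limit of the finite-\(N_p\) identities, where each step above is exact matrix algebra. I would then pass to the limit under the assumption that \(\UHATO\in\ell^2\) and that the operators act boundedly on it, so that the double series converges absolutely and may be reordered.
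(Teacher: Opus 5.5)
Your proposal is correct and matches the paper's proof step for step. You rewrite \cref{eq:i-eig-ode} as $\dUHATO=(S-\NHO)\UHATO$, split off the symmetric parts using \cref{eq:h-ibp-orth} and \cref{eq:ch-orth}, replace $C_H\UHATO$ by $K_H\UHATO$ via \Cref{lem:skew-lift}, and conclude from the antisymmetry of $\NHOSKW-S_{\mathrm{skw}}-K_H$. Your closing convergence discussion goes beyond the paper, which manipulates the infinite matrices formally. If you pursue it, note that \cref{eq:h-compat-orth} holds only for the full I--EIG sums and fails for finite-$N_p$ truncations (cf.\ \Cref{prop:cpg-compat-defect}). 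So the lift step must use the identity in the limit itself; it cannot be inherited from finite-$N_p$ exact algebra.
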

\begin{proof}
  From \Cref{def:h-orth} and \cref{eq:i-eig-ode},
  \(
    \dUHATO+(\NHO-S)\UHATO=0
  \).
  Using \cref{eq:h-ibp-orth,eq:ch-orth}, and
  \(\BHOG=\BHO+\BHOD\), this becomes
  \(
    \dUHATO
    = -(\NHOSKW-S_{\mathrm{skw}})\UHATO
    + C_H\UHATO
    - \tfrac12\BHOG\UHATO
  \).
  \Cref{lem:skew-lift} gives \(C_H\UHATO=K_H\UHATO\), yielding
  \cref{eq:i-eig-state}.  Differentiating \(E_\alpha\) and using
  \cref{eq:i-eig-state} gives \cref{eq:i-eig-mode-energy}.
  Since \(\NHOSKW\), \(S_{\mathrm{skw}}\), and \(K_H\) are antisymmetric,
  \(P_{\alpha\beta}=-P_{\beta\alpha}\), hence
  \(\sum_{\alpha,\beta}P_{\alpha\beta}=0\).  Summing over \(\alpha\)
  gives \cref{eq:i-eig-total-energy}.
\end{proof}

We consider neighboring elements \(\Omega\) and \(\Omega_f\) sharing
the interface \(\partial\Omega^f\).  Unbracketed quantities are viewed
from \(\Omega\), while quantities viewed from \(\Omega_f\) carry the
subscript \([\Omega_f]\).  Let \(n_f\) be the outward unit normal of
\(\Omega\) on \(\partial\Omega^f\); \(n_{f[\Omega_f]}=-n_f\).
By \Cref{lem:i-eig-no-trace}, the trace
is single-valued on the interface:
\(
  \U_{[\Omega]}|_{\partial\Omega^f}
  =
  \U_{[\Omega_f]}|_{\partial\Omega^f}
\).
Decompose the boundary operator facewise as
\(\BHOG=\sum_f\BHOG{\!\!\!}^f\).  From
\cref{eq:i-eig-mode-energy}, the \(f\)-face contributions to the
\(\alpha\)-th modal energy of \(\Omega\) and the \(\gamma\)-th modal
energy of \(\Omega_f\) are
\begin{equation}
  \begin{aligned}
    \mathcal B_\alpha^f
    :=-\tfrac12 \sum_{\beta} \UHATO_\alpha{}^\top
    (\BHOG{}^{\!\!\!f})_{\alpha\beta} \UHATO_\beta,~~
    \mathcal B_{[\Omega_f]\gamma}^f
    :=-\tfrac12 \sum_{\beta}
    \UHATO_{[\Omega_f]\gamma}{}^\top
    (\BHOG{}^{\!\!\!f}_{[\Omega_f]})_{\gamma\beta}
    \UHATO_{[\Omega_f]\beta}.
  \end{aligned}
  \label{eq:face-energy-parts}
\end{equation}
We now extend the intraelement pairwise exchange established in
\Cref{thm:i-eig-exchange} to neighboring elements.
\begin{theorem}[Interelement pairwise exchange and domain energy balance]
  \label{prop:interface-conservation}
  Under the assumptions of \Cref{thm:i-eig-exchange}, let
  \(G_k(\U)\) be the symmetric energy-flux matrix defined by
  \cref{eq:energy-flux-form}, and set
  \(G_{n_f}(\U):=G_k(\U)n_{f,k}\).
  For each pair of modes \(\alpha\) in \(\Omega\) and \(\gamma\) in
  \(\Omega_f\), define
  \begin{equation}
    P_{\alpha,[\Omega_f]\gamma}^f
    :=
    -\tfrac12
    {\textstyle \int_{\partial\Omega^f}}
    \bigl(\Phi_\alpha^{(o)}\UHATO_\alpha\bigr)^\top
    G_{n_f}(\U)
    \bigl(\Phi_{[\Omega_f]\gamma}^{(o)}
    \UHATO_{[\Omega_f]\gamma}\bigr)\,dS .
    \label{eq:interface-pair-exchange}
  \end{equation}
  Then
  \( \mathcal B_\alpha^f
  = \sum_\gamma P_{\alpha,[\Omega_f]\gamma}^f \)
  and
  \( \mathcal B_{[\Omega_f]\gamma}^f
  = \sum_\alpha P_{[\Omega_f]\gamma,\alpha}^f \),
  where
  \( P_{[\Omega_f]\gamma,\alpha}^f
  =-P_{\alpha,[\Omega_f]\gamma}^f \).
  Consequently,
  \begin{equation}
    \sum_\alpha \mathcal B_\alpha^f
    +
    \sum_\gamma \mathcal B_{[\Omega_f]\gamma}^f
    =0
    \label{eq:interface-exchange-cancellation}
  \end{equation}
  on every internal interface.  Hence, for any target domain
  \(\mathcal D\) formed by a union of elements,
  \begin{equation}
    \frac{d}{dt}
    \sum_{\Omega\subset\mathcal D}\sum_\alpha E_{\Omega,\alpha}
    =
    \mathcal B_{\partial\mathcal D},
    \label{eq:domain-energy-balance}
  \end{equation}
  where \(\mathcal B_{\partial\mathcal D}\) denotes the total contribution
  from the external boundary \(\partial\mathcal D\).
\end{theorem}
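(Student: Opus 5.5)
The plan is to express everything in terms of the face integral of the energy-flux matrix \(G_{n_f}\) and then use single-valuedness of the trace. The quantities \(\Phi^{(o)}_\alpha\) are the orthogonalized basis functions; I take them to be defined by \(\Phi^{(o)}:=\Phi R^{-1}\), so that \(\Phi\UHAT=\Phi^{(o)}\UHATO\). This is consistent with \(X^{(o)}=R^{-\top}XR^{-1}\) and \(\UHATO=R\UHAT\).

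First I rewrite \(\BHOG\). By \Cref{lem:flux-defect-h-sym}, \(H(A_k+\Delta A_k)=G_k\), so from \Cref{def:h-operators}
\[
(\BH+\BHD)_{\alpha\beta}=\textstyle\int_{\partial\Omega}\Phi_\alpha^\top G_k(\U)\Phi_\beta n_k\,dS .
\]
Applying the congruence \(R^{-\top}(\cdot)R^{-1}\) and splitting facewise gives
\[
(\BHOG{}^{\!\!\!f})_{\alpha\beta}=\textstyle\int_{\partial\Omega^f}(\Phi^{(o)}_\alpha)^\top G_{n_f}(\U)\Phi^{(o)}_\beta\,dS .
\]
Summing over \(\beta\) in \cref{eq:face-energy-parts} then yields
\[
\mathcal B^f_\alpha=-\tfrac12\textstyle\int_{\partial\Omega^f}(\Phi^{(o)}_\alpha\UHATO_\alpha)^\top G_{n_f}(\U)\,\U|_{\partial\Omega^f}\,dS,
\]
since \(\sum_\beta\Phi^{(o)}_\beta\UHATO_\beta=\U\) on the face. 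Here the trace is taken from \(\Omega\).

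Next I invoke \Cref{lem:i-eig-no-trace}: the trace is single-valued, so
\[
\U|_{\partial\Omega^f}=\textstyle\sum_\gamma\Phi^{(o)}_{[\Omega_f]\gamma}\UHATO_{[\Omega_f]\gamma}.
\]
Substituting this gives \(\mathcal B^f_\alpha=\sum_\gamma P^f_{\alpha,[\Omega_f]\gamma}\) directly from \cref{eq:interface-pair-exchange}. The same computation from the side of \(\Omega_f\) uses \(G_{n_{f[\Omega_f]}}=-G_{n_f}\), which holds because \(G_k\) is linear in the normal and \(n_{f[\Omega_f]}=-n_f\). It also uses the fact that both sides evaluate \(G\) at the same state \(\U\). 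This gives \(\mathcal B^f_{[\Omega_f]\gamma}=\sum_\alpha P^f_{[\Omega_f]\gamma,\alpha}\) with
\[
P^f_{[\Omega_f]\gamma,\alpha}=+\tfrac12\textstyle\int(\Phi^{(o)}_{[\Omega_f]\gamma}\UHATO_{[\Omega_f]\gamma})^\top G_{n_f}(\Phi^{(o)}_\alpha\UHATO_\alpha)\,dS .
\]
Since \(G_{n_f}\) is symmetric, the integrand is a symmetric scalar bilinear form, and so \(P^f_{[\Omega_f]\gamma,\alpha}=-P^f_{\alpha,[\Omega_f]\gamma}\).

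Summing over \(\alpha\) and \(\gamma\) gives \cref{eq:interface-exchange-cancellation}. For the domain balance, I sum \cref{eq:i-eig-mode-energy} over \(\alpha\) and over the elements \(\Omega\subset\mathcal D\). The intraelement terms \(\sum P_{\alpha\beta}\) vanish by \Cref{thm:i-eig-exchange}. The boundary terms decompose into face contributions, and each internal face is shared by exactly two elements of \(\mathcal D\), so its contributions cancel by \cref{eq:interface-exchange-cancellation}. What remains are the faces on \(\partial\mathcal D\), and their sum defines \(\mathcal B_{\partial\mathcal D}\).

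The main obstacle I expect is the bookkeeping in the first step. One must justify that \(\BHOG\) can be written with \(G_{n_f}\) and orthogonalized basis functions \(\Phi^{(o)}=\Phi R^{-1}\), where the coupling across modes is block-mixed and \(R\) differs between the two elements. One must also check that the infinite sums may be exchanged with the face integral; in the I--EIG setting this is taken as given by convergence of the expansion of \(\U\). The key point is that the two-sided cancellation needs only the single-valued trace, not a common frame.
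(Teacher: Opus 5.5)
Your proof is correct and follows essentially the same route as the paper. You write the face part of \(\BHOG\) as the \(G_{n_f}\)-weighted face integral in the orthogonalized basis \(\Phi R^{-1}\), and use the single-valued trace from \Cref{lem:i-eig-no-trace} to split \(\mathcal B_\alpha^f\) and \(\mathcal B_{[\Omega_f]\gamma}^f\) into pairwise terms. You then get antisymmetry from the symmetry of \(G_{n_f}\) together with \(n_{f[\Omega_f]}=-n_f\), and sum over elements for the domain balance. Compared with the paper, you only make explicit the bookkeeping (the identity \(H(A_k+\Delta A_k)=G_k\) and the congruence by \(R\)) that its brief proof leaves implicit.
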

\begin{proof}
  The single-valued trace on \(\partial\Omega^f\), together with
  \cref{eq:face-energy-parts,eq:interface-pair-exchange}, gives the two
  modal decompositions of \(\mathcal B_\alpha^f\) and
  \(\mathcal B_{[\Omega_f]\gamma}^f\).
  Symmetry of \(G_{n_f}\) and \(n_{f[\Omega_f]}=-n_f\) give
  \(P_{[\Omega_f]\gamma,\alpha}^f
  =-P_{\alpha,[\Omega_f]\gamma}^f\), and hence
  \cref{eq:interface-exchange-cancellation}.
  Combining this cancellation with \Cref{thm:i-eig-exchange} and summing
  over all elements in \(\mathcal D\) gives
  \cref{eq:domain-energy-balance}.
\end{proof}

\section{Spatial discretization and its structural defect}
\label{sec:discrete-defect}
Starting from the native \(p\)-Galerkin system in
\Cref{def:native-pg}, this section constructs quadrature-based
operators. We first obtain an SPD discrete \(H\)-metric mass matrix
and enforce the \(H\)--SBP identity through a consistent projection
of the \(H\)-weighted matrix fields while retaining nominal
\(O(h^{p+1})\) consistency. Finally, we identify the remaining
energy-compatibility defect.

\subsection{Discrete metric and quadrature layers}
\label{subsec:discrete-h}
Since projecting \(H(\Up)\) need not preserve pointwise SPD, we
evaluate it directly at quadrature nodes and determine the quadrature
degree required for \(O(h^{p+1})\) mass-matrix consistency.

\begin{definition}[Quadrature-pointwise evaluation of the discrete metric]
  \label{def:hq-metric}
  For quadrature nodes \(\{x_q\}_{q=1}^{N_q}\), define \( H_q :=
  H\bigl(\Up(x_q)\bigr) \).  Since the discrete metric is used only
  through these nodal values, each \(H_q\) is SPD under
  \Cref{ass:h-bounds}.
\end{definition}

\begin{lemma}[SPD property of the quadrature-defined \(H\)-metric mass matrix]
  \label{lem:hq-mass-spd}
  Let \(Q_r\) be an \(r\)-exact volume quadrature rule with positive
  weights, and assume that \(r\ge 2p\).  Let \(H_q\) be defined by
  \Cref{def:hq-metric}.  Then the mass matrix
  defined by
  \(
    \widetilde M_{H,\alpha\beta}^{(r)}
    :=
    Q_r\!\left[\Phi_\alpha^\top H_q\Phi_\beta\right]
  \)
  is SPD.
\end{lemma}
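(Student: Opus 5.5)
Symmetry is immediate from the block-transpose convention of \Cref{def:block-sym-skw}. Since each \(H_q\) is symmetric and \(\Phi_\alpha=\phi_\alpha I_m\), we have
\(
  (\widetilde M_{H,\beta\alpha}^{(r)})^\top
  =Q_r[\Phi_\alpha^\top H_q^\top\Phi_\beta]
  =\widetilde M_{H,\alpha\beta}^{(r)}
\).
For positive definiteness, take an arbitrary coefficient vector \(W\in\mathbb R^{mN_p}\) and write the associated polynomial field \(w(x):=\Phi(x)W\in(V^p)^m\). Expanding the quadrature sum gives
\[
  W^\top \widetilde M_H^{(r)} W
  =\sum_{q=1}^{N_q}\omega_q\, w(x_q)^\top H_q\, w(x_q)
  \ge c_H\sum_{q=1}^{N_q}\omega_q\,|w(x_q)|^2
  = c_H\, Q_r\bigl[|w|^2\bigr],
\]
where \(\omega_q>0\) are the weights and \(c_H\) comes from \Cref{ass:h-bounds}. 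The bound holds because \(H_q\) is SPD with eigenvalues at least \(c_H\) by \Cref{def:hq-metric}.

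The key step, and the only place \(r\ge 2p\) enters, is to identify \(Q_r[|w|^2]\) with the exact integral. Each component of \(w\) lies in \(V^p\), so \(|w|^2\) is a polynomial of degree at most \(2p\le r\). Hence \(Q_r[|w|^2]=\int_\Omega|w|^2\,dV\). By \(L^2\)-orthonormality of \(\{\phi_\alpha\}\), this integral equals \(\|W\|_2^2\). We conclude \(W^\top\widetilde M_H^{(r)}W\ge c_H\|W\|_2^2>0\) for \(W\ne0\). This yields positive definiteness together with the explicit lower bound \(\lambda_{\min}(\widetilde M_H^{(r)})\ge c_H\).

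The main point to check is that the quadrature does not see a nonzero polynomial as vanishing. Exactness at degree \(2p\) rules this out directly, since a nonzero \(w\) has \(\int|w|^2>0\), and no unisolvence argument on the nodes is needed. Note also that the argument never requires exactness for the product \(\Phi_\alpha^\top H_q\Phi_\beta\) itself. The nodal evaluation of \(H\) is used only through its pointwise SPD bound, so nothing is needed about the regularity of \(H(\Up)\).
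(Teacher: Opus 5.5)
Your proof is correct and follows essentially the same route as the paper. Both arguments expand the quadratic form as a positively weighted nodal sum, use that each \(H_q\) is SPD, and invoke the \(2p\)-exactness of \(Q_r\) to identify \(Q_r[|w|^2]\) with \(\|w\|_{L^2(\Omega)}^2\). The only difference is that you use the uniform eigenvalue bound \(c_H\) from \Cref{ass:h-bounds} to get the quantitative estimate \(\lambda_{\min}(\widetilde M_H^{(r)})\ge c_H\) directly. The paper instead argues qualitatively that a vanishing quadratic form forces \(v(x_q)=0\) at every node and hence \(v=0\), which needs only pointwise SPD of each \(H_q\).
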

\begin{proof}
  Since \(H_q\) is symmetric, so is \(\widetilde M_H^{(r)}\).
  For \(c\in\mathbb R^{mN_p}\), set
  \(v:=\Phi c\in(V^p)^m\). Then
  \(
    c^\top \widetilde M_H^{(r)}c
    =
    \textstyle{\sum_q} w_q\,v(x_q)^\top H_qv(x_q)
    \ge 0
  \).
  If this quantity vanishes, the SPD property of \(H_q\) and
  \(w_q>0\) imply \(v(x_q)=0\) for all \(q\).
  Since \(r\ge2p\), \(Q_r\) is exact for \(|v|^2\in V^{2p}\), so
  \(
    \|v\|_{L^2(\Omega)}^2=Q_r[|v|^2]=0
  \).
  Hence, \(v=0\), and therefore \(c=0\).
  Thus, \(c^\top\widetilde M_H^{(r)}c>0\) for every \(c\ne0\), and
  \(\widetilde M_H^{(r)}\) is SPD.
\end{proof}
The following proposition gives the stronger quadrature requirement
needed for \(O(h^{p+1})\) mass-matrix consistency.
\begin{proposition}[Required quadrature degree for metric-mass construction]
  \label{prop:h-mass-degree}
  Let \(Q_{3p}\) be a \(3p\)-exact volume quadrature rule, and let
  \(H_q\) be defined by \Cref{def:hq-metric}.
  Define the quadrature and exact-integration mass matrices by
  \begin{align}
    \MHAPP_{\alpha\beta} &:= Q_{3p}\!\left[\Phi_\alpha^\top H_q
      \Phi_\beta\right],
    \label{eq:mh-q3p}\\
    \MHref_{\alpha\beta} &:= \textstyle{\int_\Omega} \Phi_\alpha^\top
    H(\Up)\Phi_\beta\,dV.
    \label{eq:mh-ref}
  \end{align}  
  Then for every $\alpha,\beta=1,\dots,N_p$, \(
  \|\MHAPP_{\alpha\beta}-\MHref_{\alpha\beta}\|_F \le C
  h^{p+1} \), with a constant $C$ independent of $h$, $\alpha$, and
  $\beta$.
\end{proposition}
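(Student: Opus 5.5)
The plan is to view the entry difference as a pure quadrature error for a single scalar integrand, and to control it by projecting the metric field onto polynomials of degree \(p\).

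\textbf{Step 1: reduce to one integrand.} Fix \(\alpha,\beta\). Since \(\Phi_\alpha=\phi_\alpha I_m\), the \((i,j)\) component of the \(m\times m\) block is
\[
\MHAPP_{\alpha\beta,ij}-\MHref_{\alpha\beta,ij}=Q_{3p}[f]-\int_\Omega f\,dV,
\qquad f:=\phi_\alpha\phi_\beta\,H_{ij}(\Up).
\]
Here \(f\) is evaluated at the nodes, so \(Q_{3p}[f]\) uses exactly the values \(H_q\). The Frobenius bound then follows by summing the \(m^2\) components.

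\textbf{Step 2: split the metric.} Write \(H(\Up)=\Pi_{L^2,p}H(\Up)+\eta\), with \(\eta:=H(\Up)-\Pi_{L^2,p}H(\Up)\).
\begin{itemize}
\item The polynomial part gives an integrand \(\phi_\alpha\phi_\beta\,\Pi_{L^2,p}H\in V^{3p}\). Because \(Q_{3p}\) is \(3p\)-exact, this part contributes no error. This is where the degree \(3p\) is needed.
\item The remainder contributes \(Q_{3p}[\phi_\alpha\phi_\beta\eta_{ij}]-\int_\Omega\phi_\alpha\phi_\beta\eta_{ij}\,dV\).
\end{itemize}

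\textbf{Step 3: bound the remainder.} Both the exact integral and the quadrature have positive weights summing to \(|\Omega|\), so the remainder is at most
\[
2|\Omega|\,\|\phi_\alpha\|_{L^\infty}\|\phi_\beta\|_{L^\infty}\|\eta\|_{L^\infty}.
\]
Three estimates enter.
\begin{itemize}
\item By \Cref{ass:field-proj-error} (\(H(\Up)\) resolved to degree \(p+1\)), together with \Cref{ass:shape-regular} and the uniform bounds of \Cref{ass:h-bounds,ass:up-bounds} to control the constants, a Bramble--Hilbert argument gives \(\|\eta\|_{L^\infty}\le Ch^{p+1}\).
\item The inverse inequality for \(L^2\)-orthonormal polynomials on a shape-regular element gives \(\|\phi_\alpha\|_{L^\infty}\le C|\Omega|^{-1/2}\).
\item Multiplying, the factor \(|\Omega|\) cancels against \(|\Omega|^{-1/2}\cdot|\Omega|^{-1/2}\), leaving \(Ch^{p+1}\).
\end{itemize}
The constant is uniform in \(\alpha,\beta\) because \(N_p\) is fixed and the inverse-inequality constant depends only on \(p\) and \(C_{\rm sr}\).

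\textbf{Main obstacle.} The delicate step is justifying \(\|\eta\|_{L^\infty}\le Ch^{p+1}\) uniformly. \(H(\Up)\) is a nonpolynomial composition of a polynomial, and its \((p+1)\)-th derivatives must be bounded independently of \(h\). \Cref{ass:up-bounds} controls only \(\nabla\Up\), so I would rely on \Cref{ass:field-proj-error} as the stated regularity hypothesis rather than deriving it from a chain-rule expansion.

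A small alternative avoids reasoning about the \(L^2\) projection. Use the degree-\(p\) Taylor polynomial of \(H_{ij}(\Up)\) about the element centroid, which is valid on a convex element by \Cref{ass:shape-regular}. This also produces a \(V^{3p}\) integrand and the same \(h^{p+1}\) remainder.
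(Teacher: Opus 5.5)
Your proof is correct and follows the paper's route. You split off \(\Pi_{L^2,p}H(\Up)\), so that \(\phi_\alpha\phi_\beta\Pi_{L^2,p}H(\Up)\in V^{3p}\) is integrated exactly by \(Q_{3p}\), and you bound the remaining quadrature error by \(\|H(\Up)-\Pi_{L^2,p}H(\Up)\|_{L^\infty}\le Ch^{p+1}\) from \Cref{ass:field-proj-error}. The only difference is in the basis factor: the paper does not use your inverse estimate \(\|\phi_\alpha\|_{L^\infty}\le C|\Omega|^{-1/2}\), but applies Cauchy--Schwarz with the positive weights together with \(Q_{3p}[\phi_\alpha^2]=\|\phi_\alpha\|_{L^2(\Omega)}^2=1\) (exactness on \(V^{2p}\)), which gives the explicit factor \(2\) without needing shape regularity or an inverse inequality at that step.
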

\begin{proof}
  Define \( \Delta\MH:=\MHAPP-\MHref \),
  \( H_p:=\Pi_{L^2,p}(H(\Up)) \), and
  \( \delta H:=H(\Up)-H_p \).
  Since \(\phi_\alpha\phi_\beta H_p\) has degree at most \(3p\),
  \( (\Delta\MH)_{\alpha\beta} = \Bigl(Q_{3p}-\int_\Omega\Bigr)
    [\phi_\alpha\phi_\beta\delta H] \)
  for each block.
  Applying Cauchy--Schwarz gives
  \begin{align*}
    \|(\Delta\MH)_{\alpha\beta}\|_F
    &\le
    \|\delta H\|_{L^\infty(\Omega);F}
    \left(
      Q_{3p}[\phi_\alpha^2]^{1/2}
      Q_{3p}[\phi_\beta^2]^{1/2}
      +
      \|\phi_\alpha\|_{L^2(\Omega)}
      \|\phi_\beta\|_{L^2(\Omega)}
    \right).
  \end{align*}
  The basis \(\{\phi_\alpha\}\subset V^p\) is \(L^2\)-orthonormal
  by \Cref{def:vp-expansion}.
  Hence, \(\|\phi_\alpha\|_{L^2(\Omega)}^2=1\), and
  \(Q_{3p}[\phi_\alpha^2]=1\) by the \(3p\)-exactness of \(Q_{3p}\).
  Thus, \(
    \|(\Delta\MH)_{\alpha\beta}\|_F \le 2\|\delta H\|_{L^\infty(\Omega);F} \).
  \Cref{ass:field-proj-error} gives
  \( \|H(\Up)-\Pi_{L^2,p}(H(\Up))\|_{L^\infty(\Omega);F} \le Ch^{p+1} \).
  Therefore,
  \(
    \|\MHAPP_{\alpha\beta}-\MHref_{\alpha\beta}\|_F
    \le
    Ch^{p+1}
  \).
\end{proof}

\subsection{Guarantee of the $H$--SBP identity}
\label{subsec:h-sbp-discrete}
This subsection constructs quadrature-based operators satisfying the
\(H\)--SBP identity.
We first state the structural conditions required for this identity.
\begin{proposition}[Structural condition for \(H\)--SBP]
  \label{prop:h-sbp-layer}
  Let \(Y_k\) be a matrix-valued field for a fixed
  direction \(k\).  Let \(Q_r\) be a volume quadrature rule and
  \(Q_q^{\partial\Omega}\) be a face quadrature rule.  Define
  \( N\ksup_{\alpha\beta} := Q_r[\Phi_\alpha^\top Y_k\partial_{x_k}\Phi_\beta] \),
  \( V\ksup_{\alpha\beta} := Q_r[\Phi_\alpha^\top(\partial_{x_k}Y_k)\Phi_\beta]\),
  and
  \( B\ksup_{\alpha\beta} := Q_q^{\partial\Omega} [\Phi_\alpha^\top Y_k\Phi_\beta n_k]\).
  Suppose that the following two conditions hold:
  \begin{enumerate}
  \item[\rm(i)] for all \(x,y\in(V^p)^m\), with \(\psi_k:=y^\top
    Y_kx\), \( Q_r[\partial_{x_k}\psi_k] = Q_q^{\partial\Omega}[\psi_k
      n_k].  \)
  \item[\rm(ii)] \(Y_k(x_q)^\top=Y_k(x_q)\) at every volume quadrature
    node.
  \end{enumerate}
  Then the \(H\)--SBP identity holds:
  \( N\ksup+N\ksup{}^\top = B\ksup-V\ksup \).
  Consequently, if these conditions hold for every \(k\), then \(
  N+N^\top=B-V, \) where \(N:=\sum_kN\ksup\), and similarly for \(V\)
  and \(B\).
\end{proposition}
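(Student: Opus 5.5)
The plan is to reproduce, block by block, the continuous \(H\)--IBP argument of \Cref{lem:h-ibp}, with exact integration replaced by the quadrature rules and the product rule applied pointwise inside \(Q_r\). Fix \(k\) and indices \(\alpha,\beta\). Condition (i) is stated for vector fields \(x,y\in(V^p)^m\), while the operator blocks are \(m\times m\). I will therefore test with arbitrary constant vectors \(a,b\in\mathbb R^m\) and take \(y:=\Phi_\alpha a\) and \(x:=\Phi_\beta b\), both of which lie in \((V^p)^m\). Then \(\psi_k=a^\top\Phi_\alpha^\top Y_k\Phi_\beta b\), and it suffices to prove the scalar identity
\[
a^\top\bigl(N\ksup_{\alpha\beta}+(N\ksup{}^\top)_{\alpha\beta}\bigr)b
=a^\top\bigl(B\ksup_{\alpha\beta}-V\ksup_{\alpha\beta}\bigr)b
\]
for all \(a,b\). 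Recall that by \Cref{def:block-sym-skw}, \((N\ksup{}^\top)_{\alpha\beta}=(N\ksup_{\beta\alpha})^\top\).

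Step one applies the product rule pointwise at every quadrature node:
\[
\partial_{x_k}\psi_k
= a^\top(\partial_{x_k}\Phi_\alpha)^\top Y_k\Phi_\beta b
+ a^\top\Phi_\alpha^\top(\partial_{x_k}Y_k)\Phi_\beta b
+ a^\top\Phi_\alpha^\top Y_k\partial_{x_k}\Phi_\beta b .
\]
Applying the linear functional \(Q_r\) term by term identifies the second and third terms as \(a^\top V\ksup_{\alpha\beta}b\) and \(a^\top N\ksup_{\alpha\beta}b\). The right-hand side \(Q_q^{\partial\Omega}[\psi_k n_k]\) is \(a^\top B\ksup_{\alpha\beta}b\) by definition. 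Condition (i) then gives
\[
Q_r\bigl[a^\top(\partial_{x_k}\Phi_\alpha)^\top Y_k\Phi_\beta b\bigr]
+a^\top V\ksup_{\alpha\beta}b+a^\top N\ksup_{\alpha\beta}b
=a^\top B\ksup_{\alpha\beta}b .
\]

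Step two, the only point needing care, identifies the remaining term with \((N\ksup_{\beta\alpha})^\top\). The quadrature sum evaluates \(Y_k\) only at the volume nodes \(x_q\), so the transpose can be taken nodewise. Because \(\Phi_\alpha=\phi_\alpha I_m\) is a scalar multiple of the identity, we have \((\partial_{x_k}\Phi_\alpha)^\top Y_k(x_q)\Phi_\beta=\bigl(\Phi_\beta^\top Y_k(x_q)^\top\partial_{x_k}\Phi_\alpha\bigr)^\top\). Condition (ii) replaces \(Y_k(x_q)^\top\) by \(Y_k(x_q)\). Summing with the weights yields exactly \(a^\top (N\ksup_{\beta\alpha})^\top b\).

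Substituting this into the identity from step one and using that \(a,b\) are arbitrary gives the blockwise identity \(N\ksup+N\ksup{}^\top=B\ksup-V\ksup\). Every term is linear in the block operators, so summing over \(k\) yields \(N+N^\top=B-V\). No exactness of \(Q_r\) is needed beyond condition (i), which plays the role of the divergence theorem.
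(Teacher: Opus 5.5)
Your proof is correct and follows essentially the same route as the paper. The paper expands \(y^\top\bigl(N^{(k)}+N^{(k)\top}-B^{(k)}+V^{(k)}\bigr)x\) for general \(x,y\in(V^p)^m\) using the pointwise product rule. It applies condition (i) as the discrete divergence theorem and cancels the leftover term \(Q_r[(\partial_{x_k}y)^\top(Y_k^\top-Y_k)x]\) with condition (ii). Your blockwise test functions \(y=\Phi_\alpha a\), \(x=\Phi_\beta b\) give the same computation, and this suffices by bilinearity. The scalar-identity structure of \(\Phi_\alpha\) is not actually needed for your transposition step.
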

\begin{proof}
  For \(x,y\in(V^p)^m\), 
  \( y^\top\!\left(
  N\ksup+N\ksup{}^\top-B\ksup+V\ksup
  \right)x
  =
  Q_r[\partial_{x_k}(y^\top Y_kx)]
  -
  Q_q^{\partial\Omega}[(y^\top Y_kx)n_k]
  +
  Q_r[(\partial_{x_k}y)^\top(Y_k^\top-Y_k)x]
  \).
  Condition \rm(i) cancels the first two terms, and condition \rm(ii)
  makes the last term vanish.    Hence, \(N\ksup+N\ksup{}^\top=B\ksup-V\ksup\).
  Summing over \(k\) gives the stated summed identity.
\end{proof}
We enforce condition~\textup{(i)} by exactness rather than by
cancellation of quadrature errors.
For the generally nonpolynomial fields \(H(\Up)X_k(\Up)\), we use one
polynomial projection in both the volume and face operators.
The projection and operator construction use the two quadrature layers
defined below.

\begin{definition}[Construction quadrature layer]
  \label{def:q3p-layer}
  Let \(Q_{3p}\) be a \(3p\)-exact volume quadrature rule and let
  \(Q_{3p+1}^{f}\) be a \((3p+1)\)-exact rule on each face
  \(f\subset\partial\Omega\).
  The collection is denoted by
  \(Q_{3p+1}^{\partial\Omega}\), with the same face rule used by
  neighboring elements.
  These rules form the construction layer for the volume, boundary,
  and interelement terms in the SBP--SAT-type coupling
  \cite{DelReyFernandezHickenZingg2018SAT}.
\end{definition}
\begin{definition}[High-order quadrature rule]
  \label{def:qhi}
  For \(n\ge3p+1\), let
  \(Q_{\mathrm{hi}(n)}:=Q_n\) be an \(n\)-exact volume quadrature rule.
  It is used to evaluate nonlinear coefficient fields in the
  projection step.
\end{definition}
\begin{remark}[One-degree offset between volume and face quadratures]
  \label{rem:quadrature-degree-offset}
  For \(x,y\in(V^p)^m\) and
  \(Y_{X,k,p+1}\in V^{p+1}\), the boundary integrand
  \(  \psi_k:=y^\top Y_{X,k,p+1}x \)
  belongs to \(V^{3p+1}\), whereas
  \(\partial_{x_k}\psi_k\in V^{3p}\).
  Hence, the face quadrature requires one degree higher exactness than
  the volume quadrature.
\end{remark}
Using these two quadrature layers, we define the practical operator
construction.

\begin{definition}[Coupled Projection Galerkin (CPG)]
  \label{def:cpg}
  For \(X\in\{A,\Delta A\}\) and each \(k\), set
  \(
    Y_{X,k}(\Up):=H(\Up)X_k(\Up)
  \)
  and define
  \[
    Y_{X,k,p+1}
    :=
    \Pi_{L^2,p+1}^{\,Q_{\mathrm{hi}(n)}}
    [Y_{X,k}(\Up)]
    \qquad\in V^{p+1}(\Omega)^{m\times m}.
  \]
  The same projected field is used in the volume and face operators:
  \begin{align*}
    \NHXAPPk_{\alpha\beta}
    &:=
    Q_{3p}\!\left[
      \Phi_\alpha^\top
      Y_{X,k,p+1}\partial_{x_k}\Phi_\beta
    \right],
    \qquad
    \VHXAPPk_{\alpha\beta}
    :=
    Q_{3p}\!\left[
      \Phi_\alpha^\top
      (\partial_{x_k}Y_{X,k,p+1})\Phi_\beta
    \right],
    \\
    \BHXAPPk_{\alpha\beta}
    &:=
    Q_{3p+1}^{\partial\Omega}\!\left[
      \Phi_\alpha^\top
      Y_{X,k,p+1}\Phi_\beta n_k
    \right].
  \end{align*}
  Set
  \( \NHXAPP:=\sum_k\NHXAPPk \),
  \( \VHXAPP:=\sum_k\VHXAPPk \),
  and
  \( \BHXAPP:=\sum_k\BHXAPPk \).
  For \(X=A\), write
  \( \NHAPP:=\NHAAPP \),
  \( \VHAPP:=\VHAAPP \),
  and
  \( \BHAPP:=\BHAAPP \).
  For \(X=\Delta A\), write
  \(\NHDAPP:=\NHDAAPP \),
  \(\VHDAPP:=\VHDAAPP \),
  and
  \(\BHDAPP:=\BHDAAPP\).
  Together with \(\MHAPP\) in \cref{eq:mh-q3p}, this defines
  CPG\((n,p)\).
\end{definition}

\begin{corollary}[\(H\)--SBP property of CPG]
  \label{cor:cpg-h-sbp}
  For \(X\in\{A,\Delta A\}\), the CPG construction in \Cref{def:cpg} satisfies
  \begin{equation}
    \NHXAPP+\NHXAPP{}^\top=\BHXAPP-\VHXAPP.
    \label{eq:cpg-h-sbp}
  \end{equation}
\end{corollary}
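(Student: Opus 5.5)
The plan is to apply \Cref{prop:h-sbp-layer} directly with \(Y_k:=Y_{X,k,p+1}\), \(Q_r:=Q_{3p}\), and \(Q_q^{\partial\Omega}:=Q_{3p+1}^{\partial\Omega}\). With these choices the operators \(N\ksup\), \(V\ksup\), \(B\ksup\) of that proposition coincide with \(\NHXAPPk\), \(\VHXAPPk\), \(\BHXAPPk\) in \Cref{def:cpg}. It therefore suffices to verify conditions (i) and (ii) for each direction \(k\) and each \(X\in\{A,\Delta A\}\). The summed identity \cref{eq:cpg-h-sbp} then follows from the last assertion of \Cref{prop:h-sbp-layer}.

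\textbf{Condition (i).} I will argue by exactness, as announced before \Cref{def:q3p-layer}. For \(x,y\in(V^p)^m\), the function \(\psi_k=y^\top Y_{X,k,p+1}x\) is a scalar polynomial of degree at most \(3p+1\), because each entry of \(Y_{X,k,p+1}\) lies in \(V^{p+1}\). Consequently \(\partial_{x_k}\psi_k\in V^{3p}\), as in \Cref{rem:quadrature-degree-offset}.

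\begin{itemize}
\item Since \(Q_{3p}\) is \(3p\)-exact, \(Q_{3p}[\partial_{x_k}\psi_k]=\int_\Omega\partial_{x_k}\psi_k\,dV\).
\item Since each face rule \(Q^f_{3p+1}\) is \((3p+1)\)-exact, \(Q_{3p+1}^{\partial\Omega}[\psi_kn_k]=\int_{\partial\Omega}\psi_kn_k\,dS\).
\item The divergence theorem on \(\Omega\) equates the two exact integrals.
\end{itemize}

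The one point needing care is the boundary term. It is exact only facewise, so I will use that \(n_k\) is constant on each face. This holds because \(\Omega\) is convex by \Cref{ass:shape-regular} and is implicitly taken to be a polytope with planar faces. Then \(\psi_kn_k\) restricted to \(f\) is a polynomial of degree \(\le 3p+1\), and the face rule integrates it exactly. This is the step I expect to be the only real obstacle. If curved faces were allowed, the argument would need an extra assumption, and I would state that the faces are affine.

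\textbf{Condition (ii).} I will use the symmetry of the unprojected field and the fact that projection preserves it. By \Cref{lem:h-metric-sym} and \Cref{lem:flux-defect-h-sym}, \(Y_{X,k}(\Up)=H(\Up)X_k(\Up)\) is symmetric pointwise for both \(X=A\) and \(X=\Delta A\). The discrete projection \(\Pi_{L^2,p+1}^{\,Q_{\mathrm{hi}(n)}}\) acts componentwise and linearly (\Cref{def:l2-h-proj}), so it commutes with transposition:
\[
(\Pi Y)^\top=\Pi(Y^\top)=\Pi Y .
\]
Hence \(Y_{X,k,p+1}\) is symmetric as a polynomial field. In particular it is symmetric at every volume quadrature node, which is condition (ii). This is stronger than required, since symmetry then holds everywhere and not only at the nodes.

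With both conditions verified, \Cref{prop:h-sbp-layer} gives \(\NHXAPPk+\NHXAPPk{}^\top=\BHXAPPk-\VHXAPPk\) for each \(k\). Summing over \(k\) yields \cref{eq:cpg-h-sbp}.
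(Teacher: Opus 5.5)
Your proposal is correct and follows the paper's proof. You verify condition (i) of \Cref{prop:h-sbp-layer} by exactness of $Q_{3p}$ and $Q_{3p+1}^{\partial\Omega}$ using the degree count of \Cref{rem:quadrature-degree-offset}, and condition (ii) from \Cref{lem:h-metric-sym,lem:flux-defect-h-sym} together with the fact that the componentwise projection preserves symmetry. Your extra remark that face exactness needs planar faces, so that $n_k$ is constant on each face, makes explicit a hypothesis the paper leaves implicit; note that it is flatness of the faces, not the convexity in \Cref{ass:shape-regular}, that supplies it.
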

\begin{proof}
  By \Cref{rem:quadrature-degree-offset}, the construction quadratures
  satisfy condition~\textup{(i)} of \Cref{prop:h-sbp-layer}.
  By \Cref{lem:h-metric-sym,lem:flux-defect-h-sym},
  \(Y_{X,k}(\Up)\) is symmetric, and the componentwise projection
  preserves this symmetry, so condition~\textup{(ii)} also holds.
  Hence, \Cref{prop:h-sbp-layer} gives \cref{eq:cpg-h-sbp}.
\end{proof}

\begin{proposition}[Nominal accuracy of CPG operators]
  \label{prop:cpg-accuracy}
  For \(X\in\{A,\Delta A\}\), let
  \(\NHXAPPk\), \(\VHXAPPk\), and \(\BHXAPPk\) be the CPG operators
  defined in \Cref{def:cpg}, and let
  \(\NHXk\), \(\VHXk\), and \(\BHXk\) be the corresponding
  exact-integration operators constructed with
  \(Y_{X,k}(\Up):=H(\Up)X_k(\Up)\).
  Under
  \Cref{ass:shape-regular,ass:up-bounds,ass:field-proj-error},
  for each fixed pair of modes \(\alpha,\beta\),
  \begin{equation}
    \|\NHXAPPk_{\alpha\beta}-\NHXk_{\alpha\beta}\|_F
    +
    \|\VHXAPPk_{\alpha\beta}-\VHXk_{\alpha\beta}\|_F
    +
    \|\BHXAPPk_{\alpha\beta}-\BHXk_{\alpha\beta}\|_F
    \le Ch^{p+1},
    \label{eq:cpg-accuracy}
  \end{equation}
  where \(C\) is independent of \(h\).
\end{proposition}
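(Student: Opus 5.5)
The approach mirrors the proof of \Cref{prop:h-mass-degree}. I will split each defect into a projection part and a quadrature part, and show that the quadrature part vanishes identically because the CPG integrands are polynomials of low enough degree. Fix \(X\), \(k\), and write \(Y:=Y_{X,k}(\Up)\) and \(Y_{p+1}:=Y_{X,k,p+1}\). Introduce the exact-integration operators built from the projected field,
\[
N^{\ast}_{\alpha\beta}:=\textstyle\int_\Omega \Phi_\alpha^\top Y_{p+1}\partial_{x_k}\Phi_\beta\,dV,\quad
V^{\ast}_{\alpha\beta}:=\textstyle\int_\Omega \Phi_\alpha^\top (\partial_{x_k}Y_{p+1})\Phi_\beta\,dV,\quad
B^{\ast}_{\alpha\beta}:=\textstyle\int_{\partial\Omega} \Phi_\alpha^\top Y_{p+1}\Phi_\beta n_k\,dS .
\]
Then use the triangle inequality with \(N^\ast,V^\ast,B^\ast\) as intermediate objects.

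\emph{Step 1 (quadrature layer is exact).} The integrand of \(\NHXAPPk_{\alpha\beta}\) has degree at most \(p+(p+1)+(p-1)=3p\), and that of \(\VHXAPPk_{\alpha\beta}\) has degree at most \(p+p+p=3p\). On each face, \(n_k\) is constant because \(\Omega\) is convex and polyhedral, so the face integrand has degree at most \(3p+1\), exactly as in \Cref{rem:quadrature-degree-offset}. Hence \(Q_{3p}\) and \(Q_{3p+1}^{\partial\Omega}\) are exact, and \(\NHXAPPk=N^\ast\), \(\VHXAPPk=V^\ast\), \(\BHXAPPk=B^\ast\). It therefore remains to bound the differences caused by replacing \(Y\) with \(Y_{p+1}\) under exact integration.

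\emph{Step 2 (projection defects).} Set \(\delta Y:=Y-Y_{p+1}\). I read \Cref{ass:field-proj-error} as giving \(\|\delta Y\|_{L^\infty(\Omega)}\le Ch^{p+2}\) for the degree-\((p+1)\) discrete projection; at minimum it gives \(Ch^{p+1}\) together with one derivative order, \(\|\partial_{x_k}\delta Y\|_{L^\infty}\le Ch^{p+1}\). The use of \(Q_{\mathrm{hi}(n)}\) with \(n\ge 3p+1\) and the bounds of \Cref{ass:up-bounds} justify treating the discrete projection as the \(L^2\) projection up to an error of the same order.

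For \(V\), Cauchy--Schwarz and orthonormality give
\[
\|V^\ast_{\alpha\beta}-\VHXk_{\alpha\beta}\|_F\le\|\partial_{x_k}\delta Y\|_{L^\infty;F}\le Ch^{p+1}.
\]

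For \(N\), I would avoid the inverse estimate \(\|\partial_{x_k}\phi_\beta\|_{L^2}\lesssim h^{-1}\) wherever possible, because a naive bound loses a power of \(h\). Instead, I integrate by parts:
\[
\textstyle\int_\Omega\phi_\alpha\,\delta Y\,\partial_{x_k}\phi_\beta
=\int_{\partial\Omega}\phi_\alpha\phi_\beta\,\delta Y\, n_k
-\int_\Omega\phi_\alpha\phi_\beta\,\partial_{x_k}\delta Y
-\int_\Omega(\partial_{x_k}\phi_\alpha)\phi_\beta\,\delta Y .
\]
This reduces the \(N\)-defect to a \(V\)-type term plus boundary and derivative terms of the same form as the \(B\)-defect.

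For \(B\), I bound
\[
\|B^\ast_{\alpha\beta}-\BHXk_{\alpha\beta}\|_F\le\|\delta Y\|_{L^\infty}\,\|\phi_\alpha\|_{L^2(\partial\Omega)}\|\phi_\beta\|_{L^2(\partial\Omega)}.
\]
The discrete trace inequality under \Cref{ass:shape-regular} gives \(\|\phi\|_{L^2(\partial\Omega)}^2\le Ch^{-1}\). Combined with \(\|\delta Y\|_{L^\infty}\le Ch^{p+2}\), this yields \(Ch^{p+1}\).

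\textbf{Main obstacle.} The delicate point is the \(h\)-scaling of derivatives and traces of the orthonormal basis, which cost \(h^{-1}\) each. The bound \(Ch^{p+1}\) requires pairing these factors with the one-order-better \(L^\infty\) projection error \(h^{p+2}\) for the degree-\((p+1)\) projection. For this reason I would state explicitly that \Cref{ass:field-proj-error}, read as resolution up to degree \(p+1\), supplies \(\|\delta Y\|_{L^\infty}\le Ch^{p+2}\) and \(\|\nabla\delta Y\|_{L^\infty}\le Ch^{p+1}\). With these two bounds, each of the three defects is \(O(h^{p+1})\). Summing them gives \cref{eq:cpg-accuracy}, with a constant independent of \(h\) for each fixed pair \(\alpha,\beta\).
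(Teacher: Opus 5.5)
Your proposal is correct and follows essentially the same route as the paper. The construction quadratures are exact for the \(V^{p+1}\)-projected field, so each defect reduces to an exact integral of \(\tau_k=Y_{X,k}(\Up)-Y_{X,k,p+1}\), which is then bounded using \(\|\tau_k\|_{L^\infty}+h\|\nabla\tau_k\|_{L^\infty}\le Ch^{p+2}\), basis scaling, and the discrete trace inequality.

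One minor point: the integration by parts for the \(N\)-defect is unnecessary. Its last term \(\int_\Omega(\partial_{x_k}\phi_\alpha)\phi_\beta\,\delta Y\,dV\) is again an \(N\)-type term, not a \(B\)-type one. The direct inverse-estimate bound \(\|\delta Y\|_{L^\infty}\|\partial_{x_k}\phi_\beta\|_{L^2}\le Ch^{p+2}h^{-1}\) already gives \(Ch^{p+1}\), and this is what the paper uses.
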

\begin{proof}
  Set
  \(
    \tau_k:=Y_{X,k}(\Up)-Y_{X,k,p+1}
  \).
  By \Cref{ass:field-proj-error},
  \(
    \|\tau_k\|_{L^\infty(\Omega);F}
    +
    h\|\nabla\tau_k\|_{L^\infty(\Omega);F}
    \le Ch^{p+2}
  \).
  By \Cref{def:cpg,rem:quadrature-degree-offset}, the projected
  contributions are integrated exactly.  Hence,
  \(\NHXAPPk_{\alpha\beta}-\NHXk_{\alpha\beta}
  = -\int_\Omega \Phi_\alpha^\top\tau_k\partial_{x_k}\Phi_\beta\,dV\),
  \( \VHXAPPk_{\alpha\beta}-\VHXk_{\alpha\beta}
  = -\int_\Omega \Phi_\alpha^\top(\partial_{x_k}\tau_k)\Phi_\beta\,dV\),
  \( \BHXAPPk_{\alpha\beta}-\BHXk_{\alpha\beta}
  = -\int_{\partial\Omega}  \Phi_\alpha^\top\tau_k\Phi_\beta n_k\,dS\).
  The first two estimates follow from the bounds on
  \(\tau_k\) and \(\nabla\tau_k\), together with the standard basis
  scaling.  The third follows from the face trace estimate on
  shape-regular elements.  Thus, all three terms are
  \(O(h^{p+1})\), which gives \cref{eq:cpg-accuracy}.
\end{proof}

\subsection{Breakdown of the energy-compatibility identity}
\label{subsec:compat-defect}
Although the \(H\)--SBP identity recovers the discrete integration-by-parts
structure, it does not enforce the energy-compatibility identity
\cref{eq:i-eig-compat}, which also contains the mass-matrix evolution
(MME). We distinguish the MME along the actual finite-mode trajectory
from that induced by the PDE volume direction as follows.
\begin{definition}[True and PDE-induced MMEs]
  \label{def:mass-mme}
  Let \(\Up(t)\in(V^p)^m\), and let \(\MHref(\Up)\) and
  \(\MHAPP(\Up)\) denote the exact-integration and quadrature-based
  \(H\)-metric mass matrices, respectively.
  Their true MMEs are
  \[
    \dMHref
    :=
    D_{\Up}\MHref(\Up)[\partial_t\Up],
    \qquad
    \dMHAPP
    :=
    D_{\Up}\MHAPP(\Up)[\partial_t\Up].
  \]
  Replacing \(\partial_t\Up\) in the chain rule by the volume part of
  \cref{eq:ql-system} defines the corresponding PDE-induced MMEs:
  \begin{subequations}
    \label{eq:mme-definitions}
    \begin{align}
      \dMHV
      &:=
      D_{\Up}\MHref(\Up)
      \bigl[-A_k(\Up)\partial_{x_k}\Up\bigr],
      \label{eq:mme-pde}\\
      \dMHVAPP
      &:=
      D_{\Up}\MHAPP(\Up)
      \bigl[-A_k(\Up)\partial_{x_k}\Up\bigr].
      \label{eq:mme-quad}
    \end{align}
  \end{subequations}  
\end{definition}

The true and PDE-induced MMEs are the directional derivatives of the
mass matrix along \(\partial_t\Up\) and
\(-A_k(\Up)\partial_{x_k}\Up\), respectively, at the same state.
They coincide in the I--EIG system but not in general after finite-mode
truncation.
The PDE-induced MME nevertheless satisfies the following
exact-integration identity.
\begin{lemma}[PDE-induced energy-compatibility-type identity]
  \label{lem:pde-mme-identity}
  Let \(\Up\in(V^p)^m\) be a resolved finite-mode state.  Let
  \(\dMHV\) be the PDE-induced MME defined by \cref{eq:mme-pde},
  and let \(\VH\) and
  \(\BHD\) be the corresponding exact-integration native
  \(p\)-Galerkin quantities evaluated from the same state \(\Up\).
  Then
  \begin{equation}
    \UHATp^\top
    \bigl(
      \dMHV+\VH+\BHD
    \bigr)
    \UHATp
    =
    0 .
    \label{eq:pde-mme-compat}
  \end{equation}
\end{lemma}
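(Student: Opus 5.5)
The plan is to mimic the proof of \Cref{lem:i-eig-compat}, but to work pointwise with the algebraic identity behind \Cref{prop:h-compat} instead of the PDE. The PDE is not satisfied by \(\Up\); we only replace \(\partial_t\Up\) by the volume direction \(w:=-A_k(\Up)\partial_{x_k}\Up\).

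First I will redo the computation in \Cref{prop:h-compat} for an arbitrary smooth state \(\U\) and an arbitrary ``time direction'' \(w\). Define the directional derivative \(\dot H_w:=D_\U H(\U)[w]\) and \(\dot e_w:=\nabla_\U e^\top w\). Since \(e=\tfrac12\U^\top H\U\) identically in \(\U\), differentiating gives \(\dot e_w=\U^\top H w+\tfrac12\U^\top\dot H_w\U\). Likewise \(\partial_{x_k}g_k=\U^\top G_k\partial_{x_k}\U+\tfrac12\U^\top(\partial_{x_k}G_k)\U\). By \cref{eq:energy-pair}, with \(w=-A_k\partial_{x_k}\U\), we have \(\dot e_w+\partial_{x_k}g_k=(-\nabla e^\top A_k+\nabla g_k^\top)\partial_{x_k}\U=0\) pointwise. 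This holds without using any evolution equation, because it is just the chain-rule identity of \Cref{lem:energy-pair}. Combining and using \(G_k=H(A_k+\Delta A_k)\) yields, pointwise,
\[
\U^\top H\Delta A_k\,\partial_{x_k}\U+\tfrac12\U^\top\bigl[\dot H_w+\partial_{x_k}\{H(A_k+\Delta A_k)\}\bigr]\U=0 .
\]

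Second, I apply this with \(\U=\Up=\Phi\UHATp\) and integrate over \(\Omega\). The mass-matrix term requires care, and I expect it to be the main (mild) obstacle. Because \(\MHref(\Up)\) is an integral of \(\Phi^\top H(\Up)\Phi\), its directional derivative commutes with integration, so \(\dMHV_{\alpha\beta}=\int_\Omega\Phi_\alpha^\top\dot H_w\Phi_\beta\,dV\). Note that only \(H\) is differentiated, not \(\UHATp\); this matches the pointwise term \(\tfrac12\U^\top\dot H_w\U\). Hence the integral of that term equals \(\tfrac12\UHATp^\top\dMHV\UHATp\). Likewise the remaining terms integrate to \(\UHATp^\top\NHD\UHATp\) and \(\tfrac12\UHATp^\top(\VH+\VHD)\UHATp\), with exact integration and the definitions in \Cref{def:h-operators}.

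Finally, symmetrizing \(\UHATp^\top\NHD\UHATp=\tfrac12\UHATp^\top(\NHD+\NHD^\top)\UHATp\) and invoking the \(H\)--IBP identity \cref{eq:h-ibp-delta} of \Cref{lem:h-ibp} replaces \(\NHD+\NHD^\top\) by \(\BHD-\VHD\). The identity is valid for any smooth state on \(\Omega\), since it uses only \Cref{lem:flux-defect-h-sym} and exact integration by parts. The \(\VHD\) terms then cancel, leaving \(\tfrac12\UHATp^\top(\dMHV+\VH+\BHD)\UHATp=0\), which is \cref{eq:pde-mme-compat}.
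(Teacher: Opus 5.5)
Your proposal is correct and follows essentially the same route as the paper. The paper's proof also reruns the algebraic computation of \Cref{prop:h-compat} pointwise for \(\Up\), with \(\partial_t\Up\) replaced by \(-A_k(\Up)\partial_{x_k}\Up\). It then expands \(\Up\), integrates exactly, and applies \cref{eq:h-ibp-delta} to eliminate \(\NHD\) and cancel \(\VHD\). You spell out details the paper leaves implicit: the chain-rule step, why \(\dMHV\) is the integral of \(D_{\Up}H(\Up)[w]\), and why the \(H\)--IBP identity needs no PDE.
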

\begin{proof}
  Set  \(
    \delta_{\rm vol}\Up
    :=
    -A_k(\Up)\partial_{x_k}\Up
  \).
  Applying the algebraic calculation in the proof of
  \Cref{prop:h-compat} to \(\Up\), with
  \(\partial_t\Up\) replaced by \(\delta_{\rm vol}\Up\), gives
  \[
    \begin{aligned}
      &\Up^\top H(\Up)\Delta A_k(\Up)\partial_{x_k}\Up \\
      &\quad
      +
      \tfrac12 \Up^\top
      \left[
        D_{\Up} H(\Up)[\delta_{\rm vol}\Up]
        +
        \partial_{x_k}
        \{H(\Up)(A_k(\Up)+\Delta A_k(\Up))\}
      \right]
      \Up
      =
      0 .
    \end{aligned}
  \]
  Expanding \(\Up\), integrating exactly, and applying
  \cref{eq:h-ibp-delta} gives \cref{eq:pde-mme-compat}.
\end{proof}
Thus, in the finite-mode exact-integration system,
\cref{eq:pde-mme-compat} holds for the PDE-induced MME, whereas the
true MME need not satisfy the energy-compatibility identity. The
corresponding quadrature identity is also not guaranteed.

\begin{proposition}[Breakdown of energy compatibility in the discrete system]
  \label{prop:cpg-compat-defect}
  Assume that the operator construction in the discrete system
  satisfies the \(H\)--SBP identities for \(H(\Up)A_k(\Up)\) and
  \(H(\Up)\Delta A_k(\Up)\).  Then in general,
  \begin{equation}
    \UHATp^\top
    \bigl(
      \dMHAPP+\VHAPP+\BHDAPP
    \bigr)
    \UHATp
    \not\equiv 0 .
    \label{eq:break-compatiblity}
  \end{equation}
\end{proposition}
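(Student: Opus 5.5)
The claim is a non-identity: the quadratic form need not vanish. I will split it into a sum of defects relative to the exact PDE-induced identity, \cref{eq:pde-mme-compat}, and then show that at least one defect is generically nonzero. Write
\[
\UHATp^\top(\dMHAPP+\VHAPP+\BHDAPP)\UHATp
=\UHATp^\top(\dMHAPP-\dMHVAPP)\UHATp
+\UHATp^\top(\dMHVAPP-\dMHV)\UHATp
+\UHATp^\top\bigl[(\VHAPP-\VH)+(\BHDAPP-\BHD)\bigr]\UHATp ,
\]
where the term \(\UHATp^\top(\dMHV+\VH+\BHD)\UHATp\) has been removed using \cref{eq:pde-mme-compat}. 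The \(H\)--SBP identities of \Cref{cor:cpg-h-sbp} are consistent with this decomposition. They relate \(\NHAPP,\VHAPP,\BHAPP\) among themselves. However, they contain no information about the mass-matrix evolution, so they cannot force any of the three terms to cancel.

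The main term is the first one, the true-versus-PDE-induced MME defect. By \Cref{def:mass-mme} it equals
\[
Q_{3p}\bigl[\Up^\top D_{\Up}H(\Up)[\partial_t\Up+A_k(\Up)\partial_{x_k}\Up]\,\Up\bigr]
=Q_{3p}\bigl[\Up^\top D_{\Up}H(\Up)[\mathcal R_p]\,\Up\bigr],
\]
with \(\mathcal R_p\) from \cref{eq:residual-p}. The finite-mode evolution \cref{eq:h-weak-pg} only forces \(\mathcal R_p\) to be \(H\)-orthogonal to \((V^p)^m\), up to boundary terms. In general \(A_k(\Up)\partial_{x_k}\Up\notin(V^p)^m\), so \(\mathcal R_p\ne0\) pointwise. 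The weight \(\Up^\top D_{\Up}H[\cdot]\Up\) is not a test function from \((V^p)^m\), so this term does not vanish in general.

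To make "in general" concrete, I would give one explicit instance. A natural choice is one-dimensional shallow water on a single element, with \(p=0\) or \(p=1\) and a state with nonzero momentum gradient. There \(H\) from \Cref{tab:h-metrics} genuinely depends on \(\U\), and the product \(A(\Up)\partial_x\Up\) has a degree-\(2p\) component outside \(V^p\). Evaluating the first term at a chosen \(\Up\) shows it is nonzero. The remaining terms are quadrature and projection defects. By \Cref{prop:cpg-accuracy,prop:h-mass-degree} they are \(O(h^{p+1})\), so for generic data they cannot cancel the first term identically.

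The hardest step is excluding an accidental exact cancellation among the three terms for all states. Rather than proving this abstractly, I would settle it with the explicit low-degree example, where each piece can be computed in closed form. If needed, I would also vary a free parameter, such as the momentum slope, to exhibit different polynomial dependence of the pieces. That is enough to show the expression in \cref{eq:break-compatiblity} is not identically zero.
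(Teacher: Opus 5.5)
Your core argument is the same as the paper's. Both proofs remove $\UHATp^\top(\dMHV+\VH+\BHD)\UHATp$ via \cref{eq:pde-mme-compat}, isolate a native finite-mode defect of the form $\Up^\top D_{\Up}H(\Up)[\mathcal R_p]\Up$, and note that the quadrature defects are not constrained by the $H$--SBP identities. The only difference is where you split. The paper adds and subtracts $\dMHref$, so its native term is the exact integral $\int_\Omega\Up^\top D_{\Up}H(\Up)[\mathcal R_p]\Up\,dV$. That is the truncation defect already present in the finite-mode exact-integration model, and all quadrature effects fall into a second group, mirroring the paper's truncation/quadrature hierarchy. You add and subtract $\dMHVAPP$, so your native term appears in its computable form $Q_{3p}[\Up^\top D_{\Up}H(\Up)[\mathcal R_p]\Up]$, and the mass-matrix quadrature defect is measured along the PDE direction rather than along $\partial_t\Up$. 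Both splits are valid. The paper stops at ``not structurally forced to vanish.''

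Going further to an explicit witness is the right instinct, but the justification you give does not hold. First, knowing that the remaining terms are $O(h^{p+1})$ cannot rule out cancellation. An upper bound never excludes an exact identity, and for smooth resolved states $\mathcal R_p$ is itself a truncation residual tending to zero, so your first term is small too; you have shown no separation of orders. Second, $p=0$ admits no momentum gradient, and on a single periodic element at $p=0$ one has $\partial_t\Up=0$, so the expression simply vanishes.

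Still, $p=0$ gives the cleanest rigorous witness once $\partial_t\Up$ is driven by interface fluxes from neighbors or external boundary data. In 1D:
\begin{itemize}
\item the CPG projection of the constant field $H(\Up)A(\Up)$ is constant, so $\VHAPP=0$;
\item the two endpoint contributions to $\BHDAPP$ cancel, so $\BHDAPP=0$;
\item $Q_0[\phi_1^2]=1$ gives $\UHATp^\top\dMHAPP\UHATp=|\Omega|\,\Up^\top D_{\Up}H(\Up)[\partial_t\Up]\Up$.
\end{itemize}
Differentiating $\U^\top H(\U)\U=2e(\U)$ yields $\U^\top D_{\U}H(\U)[\delta]\U=2(\nabla_{\U}e-H\U)^\top\delta$. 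For shallow water (depth $h$, momentum $m$, as in \Cref{tab:h-metrics}), $\nabla_{\U}e-H\U=\bigl(-\tfrac{3m^2}{2h^2},\,\tfrac{m}{h}\bigr)^\top$. The expression in \cref{eq:break-compatiblity} therefore equals $|\Omega|\bigl(-\tfrac{3m^2}{h^2}\dot h+\tfrac{2m}{h}\dot m\bigr)$ with $(\dot h,\dot m):=\partial_t\Up$. This is nonzero whenever $m\neq0$ and the flux-driven $\partial_t\Up$ is not orthogonal to that vector. Because every quadrature-defect term vanishes identically in this case, no cancellation needs to be excluded.
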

\begin{proof}
  Let \(\VH\) and \(\BHD\) denote the corresponding exact-integration
  operators at the same state \(\Up\).
  By \Cref{lem:pde-mme-identity},
  \( \UHATp^\top(\dMHV+\VH+\BHD)\UHATp=0 \). Hence,
  \[
    \UHATp^\top(\dMHAPP+\VHAPP+\BHDAPP)\UHATp
    =
    \UHATp^\top
    \Bigl[
      (\dMHAPP-\dMHV)
      +(\VHAPP-\VH)
      +(\BHDAPP-\BHD)
    \Bigr]\UHATp .
  \]
  Adding and subtracting \(\dMHref\) gives
  \[
  \begin{aligned}
    &\UHATp^\top \bigl( \dMHAPP+\VHAPP+\BHDAPP \bigr) \UHATp
    =
    \UHATp^\top \bigl( \dMHref-\dMHV \bigr) \UHATp \\
    &\qquad
    +
    \UHATp^\top \Bigl[ (\dMHAPP-\dMHref)
    +(\VHAPP-\VH) +(\BHDAPP-\BHD) \Bigr] \UHATp .
  \end{aligned}
  \]
  The first term is the native finite-mode defect.
  By the definitions of \(\dMHref\) and \(\dMHV\) and
  \cref{eq:residual-p},
  \(
    \UHATp^\top(\dMHref-\dMHV)\UHATp
    =
    \int_\Omega
    \Up^\top D_{\Up}H(\Up)[\mathcal R_p]\Up\,dV
  \).
  Since \(\mathcal R_p\) need not vanish pointwise in the native
  \(p\)-Galerkin system, this term is not structurally forced to vanish.
  The second term is the quadrature defect, which is not forced to
  vanish by the \(H\)--SBP identities.
  Therefore, \cref{eq:break-compatiblity} holds.
\end{proof}

\section{Recovery of the energy structure}
\label{sec:h-ecf}
This section recovers the discrete energy-compatibility identity
without introducing an independent correction to the
trajectory-dependent \(H\)-orthogonal frame.
We first derive the energy-compatibility residual in the quadrature-based frame and then remove only the component of the compatibility action parallel to the coefficient vector so that the residual vanishes.
The remaining tangential action is represented by an antisymmetric lift.
This realizes the pairwise modal energy-exchange structure
and the corresponding energy balance of the I--EIG system in \Cref{sec:i-eig}.
Finally, we derive the equivalent coefficient equation in the fixed
Galerkin basis.

\subsection{Energy compatibility residual}
We apply the \(H\)-orthogonalization in \Cref{def:h-orth} to the
quadrature-based mass matrix \(\MHAPP\).
Let \(\widetilde R\) denote the corresponding upper triangular
Cholesky factor, so that
\(\MHAPP=\widetilde R^\top\widetilde R\), and set
\( \UHATOp:=\widetilde R\UHATp\) and
\( \widetilde X^{(o)} :=\widetilde R^{-\top}\widetilde X\widetilde R^{-1}\)
for any quadrature-based matrix \(\widetilde X\).
With
\( \dMHOAPP :=\widetilde R^{-\top}\dMHAPP\widetilde R^{-1} \),
the connection identities in \Cref{lem:chol-connection} give
\( \widetilde S := \dot{\widetilde R}\widetilde R^{-1}
= \operatorname{triu}(\dMHOAPP)
-\tfrac12\operatorname{diag}(\dMHOAPP) \).
\( \widetilde S+\widetilde S^\top=\dMHOAPP \),
and 
\begin{equation}
  \dUHATOp
  =
  \widetilde R\dUHATp+\widetilde S\UHATOp.
  \label{eq:true-frame-velocity}
\end{equation}
The above MME is written by using \(Q_{3p}\) rule as
\begin{equation}
  \dMHAPP
  =
  Q_{3p}\!\left[
    \Phi^\top
    D_{\Up}H(\Up)[\partial_t\Up]
    \Phi
  \right].
  \label{eq:true-mme-q3p}
\end{equation}
We define the compatibility operator and its scalar residual by
\begin{align}
  \widetilde C_H
  &:=\tfrac12\left(
    \dMHOAPP+\VHOAPP+\BHODAPP
  \right),
  \label{eq:ch-true}\\
  \rho_H
  &:=\UHATOp{}^\top\widetilde C_H\UHATOp.
  \label{eq:rhoh-true}
\end{align}
The residual \(\rho_H\) is not structurally forced to vanish by
\Cref{prop:cpg-compat-defect}.

\subsection{Recovery of the energy-compatibility identity}
\label{subsec:h-ecf-compat}

\begin{figure}[t]
  \centering
  \includegraphics[width=0.7\linewidth]{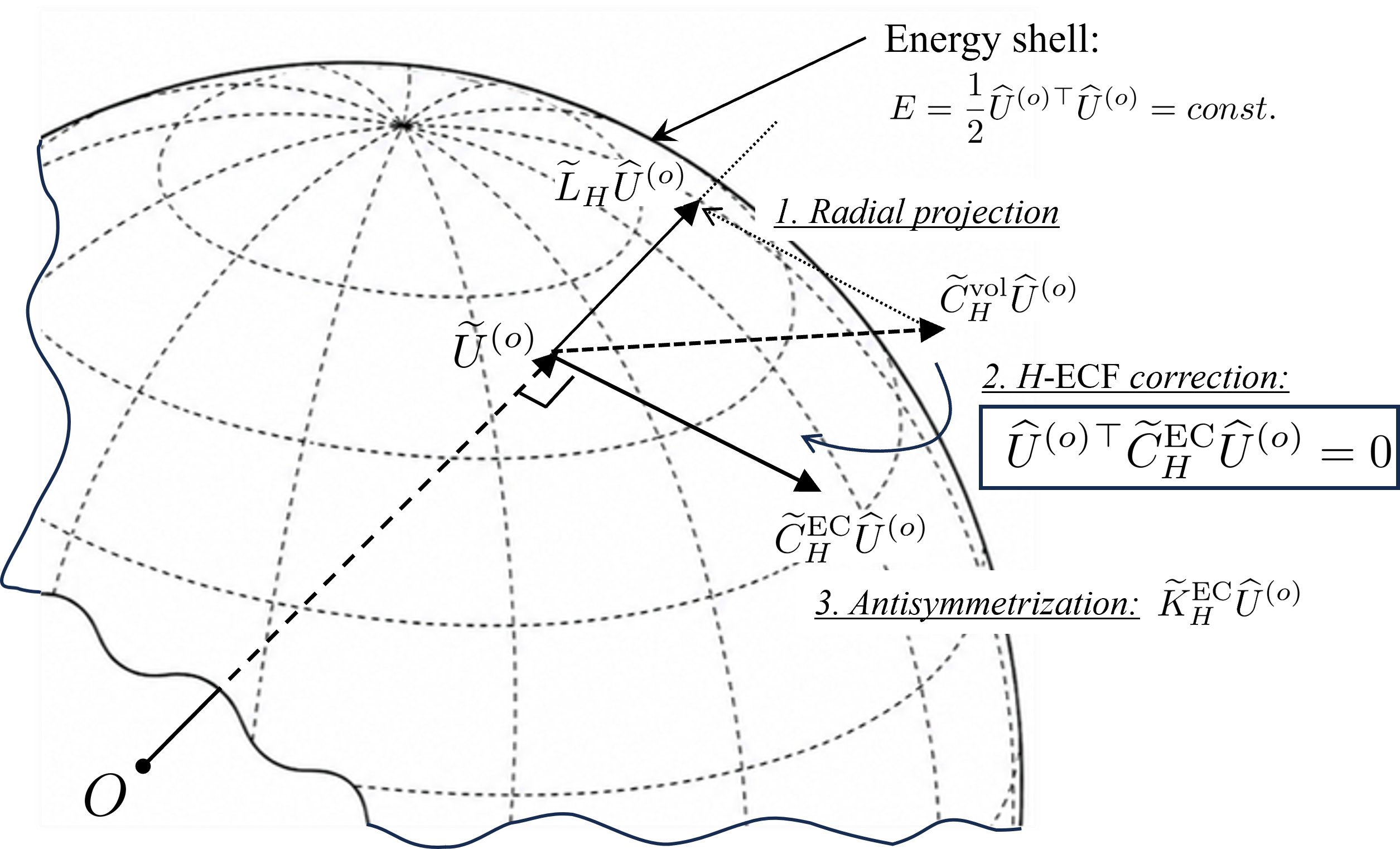}

  \caption{
    Geometric interpretation of \(H\)--ECF in the
    \(H\)-orthogonalized coefficient space.
    The correction removes the radial component
    \(\widetilde L_H\UHATOp\) of
    \(\widetilde C_H\UHATOp\), leaving
    \(\widetilde C_H^{\rm EC}\UHATOp\) tangent to the energy shell
    \(E=\tfrac12\|\UHATOp\|_2^2\).
    The tangential action is represented by the antisymmetric lift
    \(\widetilde K_H^{\rm EC}\UHATOp\).
  }
  \label{fig:h-ecf-geometry}
\end{figure}

\begin{definition}[\(H\)--Energy Compatibility Forcing (\(H\)--ECF)]
  \label{def:h-ecf}
  The \(H\)--ECF closure replaces only the compatibility action
  \(\widetilde C_H\UHATOp\) by its component orthogonal to
  \(\UHATOp\).
  It therefore removes the component that contributes to the scalar
  energy residual and retains the tangential action on the
  physical-energy shell; see \Cref{fig:h-ecf-geometry}.
\end{definition}
\begin{remark}
  The \(H\)--ECF closure introduces no independent correction to the
  quadrature-based metric evolution or its Cholesky frame.
  Along the trajectory generated by the closed coefficient equation,
  \(\MHAPP\), \(\dMHAPP\), \(\widetilde R\), and \(\widetilde S\)
  are determined by the current coefficient vector and its actual
  time derivative through the relations introduced above.
\end{remark}

The removed component is represented by the following symmetric
operator.
\begin{lemma}[Projection of the compatibility action onto the state direction]
  \label{lem:state-projection}
  Let \(\UHATOp\neq0\), and let \(\rho_H\) be defined by
  \cref{eq:rhoh-true}.  Define
  \begin{equation}
    \widetilde L_H
    :=
    \frac{\rho_H}{\lVert\UHATOp\rVert^4}
    \UHATOp\UHATOp{}^\top.
    \label{eq:lh-projection}
  \end{equation}
  Then \(\widetilde L_H\) is symmetric and satisfies
  \(
    \UHATOp{}^\top\widetilde L_H\UHATOp=\rho_H
  \).
\end{lemma}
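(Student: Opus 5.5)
The claim follows directly from the rank-one structure of \(\widetilde L_H\), so the plan is a short two-step verification.

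\emph{Well-definedness.} Since \(\UHATOp\neq0\), we have \(\lVert\UHATOp\rVert^4>0\), so the prefactor in \cref{eq:lh-projection} is finite. Moreover \(\rho_H\) is a real scalar: by \cref{eq:rhoh-true} it is the quadratic form of \(\widetilde C_H\) evaluated at \(\UHATOp\).

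\emph{Symmetry.} Under the block transpose of \Cref{def:block-sym-skw}, which for a full matrix coincides with the ordinary transpose, we have \((\UHATOp\UHATOp{}^\top)^\top=\UHATOp\UHATOp{}^\top\). Multiplying a symmetric matrix by the scalar \(\rho_H/\lVert\UHATOp\rVert^4\) keeps it symmetric, so \(\widetilde L_H^\top=\widetilde L_H\). Note that this argument does not use any symmetry of \(\widetilde C_H\) itself.

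\emph{Quadratic form.} By associativity,
\[
\UHATOp{}^\top\widetilde L_H\UHATOp
=\frac{\rho_H}{\lVert\UHATOp\rVert^4}\bigl(\UHATOp{}^\top\UHATOp\bigr)\bigl(\UHATOp{}^\top\UHATOp\bigr)
=\frac{\rho_H}{\lVert\UHATOp\rVert^4}\lVert\UHATOp\rVert^4=\rho_H .
\]

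For later use we will also record that \(\widetilde L_H\UHATOp=(\rho_H/\lVert\UHATOp\rVert^2)\,\UHATOp\). This is the radial (parallel) component of \(\widetilde C_H\UHATOp\) along \(\UHATOp\), because \(\UHATOp{}^\top\widetilde C_H\UHATOp=\rho_H\). It is this component that \Cref{def:h-ecf} removes.

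No step is a genuine obstacle. The only point needing care is to avoid relying on symmetry of \(\widetilde C_H\). That symmetry does hold, since \(\dMHOAPP\), \(\VHOAPP\) and \(\BHODAPP\) are congruence transforms of symmetric matrices, but the lemma does not need it.
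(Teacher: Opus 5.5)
Your proof is correct and follows the paper's own argument: the paper also notes that symmetry of the scaled rank-one outer product is immediate, and that direct substitution gives \(\rho_H\) because the two factors of \(\lVert\cdot\rVert^2\) cancel the \(\lVert\cdot\rVert^4\) normalization. Your additional remarks are accurate but not needed for the lemma: the denominator is nonzero, the lemma does not require symmetry of \(\widetilde C_H\), and \(\widetilde L_H\) acting on the state vector gives the radial component of the compatibility action.
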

\begin{proof}
  Symmetry is immediate, and substitution gives the identity.
\end{proof}

\begin{proposition}[Recovery of the scalar energy-compatibility identity]
  \label{prop:h-ecf-compat}
  Define the corrected compatibility operator by
  \( \widetilde C_H^{\rm EC} :=\widetilde C_H-\widetilde L_H \).
  Then
  \begin{equation}
    \UHATOp{}^\top\widetilde C_H^{\rm EC}\UHATOp=0.
    \label{eq:h-ecf-compatibility}
  \end{equation}
\end{proposition}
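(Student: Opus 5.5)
The argument is a direct computation that uses only the defining relations \cref{eq:rhoh-true} and \cref{eq:lh-projection}. We assume, as in \Cref{lem:state-projection}, that \(\UHATOp\neq0\); otherwise \(\widetilde L_H\) is undefined, and the identity is trivial in any case because the quadratic form vanishes at the zero vector.

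First, expand the quadratic form linearly:
\[
  \UHATOp{}^\top\widetilde C_H^{\rm EC}\UHATOp
  =\UHATOp{}^\top\widetilde C_H\UHATOp-\UHATOp{}^\top\widetilde L_H\UHATOp .
\]
By \cref{eq:rhoh-true}, the first term equals \(\rho_H\). By \Cref{lem:state-projection}, the second term also equals \(\rho_H\). To make this explicit, substitute \cref{eq:lh-projection}. This gives \(\rho_H\,(\UHATOp{}^\top\UHATOp)^2/\lVert\UHATOp\rVert^4=\rho_H\), since \(\UHATOp{}^\top\UHATOp=\lVert\UHATOp\rVert^2\). The difference is therefore zero, which is \cref{eq:h-ecf-compatibility}.

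There is no genuine obstacle. The one point to check is that \(\rho_H\) is a scalar evaluated at the current state, so \(\widetilde L_H\) is a fixed symmetric matrix at each instant and the substitution is legitimate. This holds regardless of whether \(\widetilde C_H\) is symmetric, because only the scalar \(\rho_H\) enters.

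It is also worth noting, for later use, that \(\widetilde C_H^{\rm EC}\) is symmetric, since both \(\widetilde C_H\) and \(\widetilde L_H\) are. Together with \cref{eq:h-ecf-compatibility}, this is exactly the pair of hypotheses used in the proof of \Cref{lem:skew-lift}. The antisymmetric lift \(\widetilde K_H^{\rm EC}\) can therefore be built in the same way, with \(\widetilde K_H^{\rm EC}\UHATOp=\widetilde C_H^{\rm EC}\UHATOp\).
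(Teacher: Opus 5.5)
Your proof is correct and is the paper's argument: substituting \cref{eq:rhoh-true} and \cref{eq:lh-projection} makes both quadratic forms equal to $\rho_H$, so they cancel. Your side remark that $\widetilde C_H^{\rm EC}$ is symmetric is also right, although you assert it without checking. It holds because the mass-matrix derivative and the projected CPG volume and face fields are all symmetric, and it is the hypothesis the paper tacitly uses in the lift lemma that follows.
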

\begin{proof}
  \Cref{eq:rhoh-true,eq:lh-projection} gives \cref{eq:h-ecf-compatibility}
\end{proof}
The corrected compatibility action admits the same antisymmetric lift
as in \Cref{lem:skew-lift}.
\begin{lemma}[Antisymmetric lift after \(H\)--ECF]
  \label{lem:h-ecf-skew-lift}
  Let \(\UHATOp\neq0\), and define
  \begin{equation}
    \widetilde K_H^{\rm EC}
    :=
    \frac{
      \widetilde C_H^{\rm EC}\UHATOp\UHATOp{}^\top
      -
      \UHATOp\UHATOp{}^\top\widetilde C_H^{\rm EC}
    }{
      \lVert\UHATOp\rVert^2
    }.
    \label{eq:kh-ecf}
  \end{equation}
  Then
  \((\widetilde K_H^{\rm EC})^\top=-\widetilde K_H^{\rm EC}\) and
  \(
    \widetilde K_H^{\rm EC}\UHATOp
    =\widetilde C_H^{\rm EC}\UHATOp
  \).
\end{lemma}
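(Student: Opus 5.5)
The plan mirrors the proof of \Cref{lem:skew-lift}. That proof needs exactly two inputs: symmetry of the compatibility operator and vanishing of its quadratic form on the state. Both must be checked for \(\widetilde C_H^{\rm EC}\). Write \(w:=\UHATOp\ne0\) for brevity.

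First, I will establish that \(\widetilde C_H^{\rm EC}\) is symmetric in the block sense of \Cref{def:block-sym-skw}. By \cref{eq:ch-true} and \Cref{prop:h-ecf-compat},
\(\widetilde C_H^{\rm EC}=\tfrac12(\dMHOAPP+\VHOAPP+\BHODAPP)-\widetilde L_H\). I will check each piece.

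\begin{itemize}
\item By \cref{eq:true-mme-q3p}, \(\dMHAPP\) is the \(Q_{3p}\) quadrature of \(\Phi^\top D_{\Up}H(\Up)[\partial_t\Up]\Phi\). Since \(H(\U)\) is symmetric for every state, its directional derivative is symmetric at each node, so \(\dMHAPP\) is symmetric.
\item \(\VHAPP\) is built from \(\partial_{x_k}Y_{A,k,p+1}\). The projected field \(Y_{A,k,p+1}\) is symmetric, as noted in the proof of \Cref{cor:cpg-h-sbp}, and so are its derivatives. Hence \(\VHAPP\) is symmetric.
\item The same argument with \(Y_{\Delta A,k,p+1}\) and the face rule \(Q_{3p+1}^{\partial\Omega}\) shows that \(\BHDAPP\) is symmetric.
\item The congruence \(X\mapsto\widetilde R^{-\top}X\widetilde R^{-1}\) preserves symmetry, so \(\widetilde C_H\) is symmetric.
\item \(\widetilde L_H\) is symmetric by \Cref{lem:state-projection}.
\end{itemize}

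Together these give \((\widetilde C_H^{\rm EC})^\top=\widetilde C_H^{\rm EC}\).

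Second, antisymmetry of \(\widetilde K_H^{\rm EC}\) follows by transposing \cref{eq:kh-ecf}. Using the symmetry from the first step,
\[
\bigl(\widetilde C_H^{\rm EC}ww^\top-ww^\top\widetilde C_H^{\rm EC}\bigr)^\top
=ww^\top\widetilde C_H^{\rm EC}-\widetilde C_H^{\rm EC}ww^\top ,
\]
which is the negative of the original numerator.

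Third, the action identity follows by applying \cref{eq:kh-ecf} to \(w\):
\[
\widetilde K_H^{\rm EC}w
=\frac{\widetilde C_H^{\rm EC}w\,\lVert w\rVert^2-w\,(w^\top\widetilde C_H^{\rm EC}w)}{\lVert w\rVert^2}.
\]
The second term vanishes by \cref{eq:h-ecf-compatibility}, leaving \(\widetilde C_H^{\rm EC}w\).

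The only nontrivial step is the symmetry check for the quadrature-based operators. It relies on the nodewise symmetry of \(D_{\Up}H\) and of the projected fields \(Y_{X,k,p+1}\), together with the fact that \(\widetilde V\) carries no derivative on the basis functions. The remaining steps are direct algebra.
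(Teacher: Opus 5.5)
Your proposal is correct and follows the paper's route. The paper's proof simply invokes \Cref{lem:skew-lift} together with \cref{eq:h-ecf-compatibility}, that is, symmetry of the compatibility operator plus vanishing of its quadratic form on \(\UHATOp\). Your termwise check that \(\widetilde C_H^{\rm EC}\) is symmetric (nodewise symmetry of \(D_{\Up}H\), symmetry of the projected fields \(Y_{X,k,p+1}\) and their derivatives, congruence invariance, and symmetry of \(\widetilde L_H\)) makes explicit a hypothesis the paper leaves implicit, and it is genuinely needed: without \((\widetilde C_H^{\rm EC})^\top=\widetilde C_H^{\rm EC}\), the matrix in \cref{eq:kh-ecf} would not be antisymmetric.
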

\begin{proof}
  Apply \Cref{lem:skew-lift} using \cref{eq:h-ecf-compatibility}.
\end{proof}

\begin{theorem}[Equations of orthogonalized coefficients and modal energy under \(H\)--ECF/CPG]
  \label{thm:h-ecf-exchange}
  Let \(\widetilde K_H^{\rm EC}\) be defined by
  \cref{eq:kh-ecf}, and set
  \( \BHOGAPP:=\BHOAPP+\BHODAPP \) and 
  \( \QHGNUMO:=\QHNUMO+\BHODAPP\UHATOp\).
  Define
  \(
    E_\alpha:=\tfrac12\|\UHATOp_\alpha\|_2^2
  \).
  Then for \(\alpha=1,\ldots,N_p\),
  \begin{align}
    \dUHATOp_\alpha
    ={}&
    -\sum_{\beta=1}^{N_p}
    \left(
      \NHOAPPSKW
      -\widetilde S_{\rm skw}
      -\widetilde K_H^{\rm EC}
    \right)_{\alpha\beta}
    \UHATOp_\beta
    +\tfrac12\sum_{\beta=1}^{N_p}
    \BHOGAPP_{\alpha\beta}\UHATOp_\beta
    -\QHGNUMO_\alpha,
    \label{eq:h-ecf-state}\\
    \dot E_\alpha
    ={}&
    \sum_{\beta=1}^{N_p}P_{\alpha\beta}^{\rm EC}
    +\tfrac12\sum_{\beta=1}^{N_p}
    \UHATOp_\alpha{}^\top
    \BHOGAPP_{\alpha\beta}\UHATOp_\beta
    -\UHATOp_\alpha{}^\top\QHGNUMO_\alpha,
    \label{eq:h-ecf-mode-energy}
  \end{align}
  where
  \[
    P_{\alpha\beta}^{\rm EC}
    :=
    -\UHATOp_\alpha{}^\top
    \left(
      \NHOAPPSKW
      -\widetilde S_{\rm skw}
      -\widetilde K_H^{\rm EC}
    \right)_{\alpha\beta}
    \UHATOp_\beta.
  \]
  Moreover,
  \(P_{\alpha\beta}^{\rm EC}=-P_{\beta\alpha}^{\rm EC}\) and
  \(
    \sum_{\alpha,\beta=1}^{N_p}P_{\alpha\beta}^{\rm EC}=0
  \).
  Consequently, the total energy
  \(E:=\sum_{\alpha=1}^{N_p}E_\alpha\) satisfies
  \begin{equation}
    \dot E
    =
    \tfrac12
    \sum_{\alpha,\beta=1}^{N_p}
    \UHATOp_\alpha{}^\top
    \BHOGAPP_{\alpha\beta}\UHATOp_\beta
    -
    \sum_{\alpha=1}^{N_p}
    \UHATOp_\alpha{}^\top\QHGNUMO_\alpha.
    \label{eq:h-ecf-total-energy}
  \end{equation}
\end{theorem}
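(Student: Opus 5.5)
We follow the proof of \Cref{thm:i-eig-exchange}, now starting from the discrete coefficient equation.

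\textbf{Step 1 (discrete coefficient equation).} Write the CPG version of \cref{eq:h-weak-pg} as
\[
\MHAPP\dUHATp+\NHAPP\UHATp=\BHAPP\UHATp-\QHNUM .
\]
Multiply on the left by $\widetilde R^{-\top}$, substitute $\UHATp=\widetilde R^{-1}\UHATOp$, and use \cref{eq:true-frame-velocity}. This gives
\[
\dUHATOp=-(\NHOAPP-\widetilde S)\UHATOp+\BHOAPP\UHATOp-\QHNUMO .
\]

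\textbf{Step 2 (split into skew and symmetric parts).} Decompose $\NHOAPP$ and $\widetilde S$ into their skew and symmetric parts.
\begin{itemize}
\item The congruence of \cref{eq:cpg-h-sbp} with $X=A$ gives $\NHOAPP{}_{\rm sym}=\tfrac12(\BHOAPP-\VHOAPP)$.
\item The relation $\widetilde S+\widetilde S^\top=\dMHOAPP$ gives $\widetilde S_{\rm sym}=\tfrac12\dMHOAPP$.
\end{itemize}
Substituting both, the symmetric remainder becomes
\[
\tfrac12\bigl(\dMHOAPP+\VHOAPP-\BHOAPP\bigr)+\BHOAPP
=\widetilde C_H-\tfrac12\BHODAPP+\tfrac12\BHOAPP ,
\]
using the definition \cref{eq:ch-true}. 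With $\BHOGAPP=\BHOAPP+\BHODAPP$ this equals $\widetilde C_H+\tfrac12\BHOGAPP-\BHODAPP$. The $-\BHODAPP\UHATOp$ term combines with $-\QHNUMO$ into $-\QHGNUMO$.

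\textbf{Step 3 (apply the closure).} This step is where the argument actually goes beyond \Cref{thm:i-eig-exchange}: we must justify that the closed system replaces $\widetilde C_H\UHATOp$ by $\widetilde C_H^{\rm EC}\UHATOp$. This is precisely the content of \Cref{def:h-ecf}. I would state this explicitly, as the defining statement of the closed H--ECF/CPG equation, rather than derive it. Then \Cref{lem:h-ecf-skew-lift} replaces $\widetilde C_H^{\rm EC}\UHATOp$ by $\widetilde K_H^{\rm EC}\UHATOp$, which yields \cref{eq:h-ecf-state}.

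A subtlety needs a remark. $\widetilde S$ depends on $\dUHATp$ through $\dMHAPP$, so the equation is implicit in the time derivative. The identity is nonetheless an algebraic consequence along the trajectory, consistent with the remark following \Cref{def:h-ecf}.

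\textbf{Step 4 (modal and total energy).} Compute $\dot E_\alpha=\UHATOp_\alpha{}^\top\dUHATOp_\alpha$ and insert \cref{eq:h-ecf-state}; this gives \cref{eq:h-ecf-mode-energy} directly.
\begin{itemize}
\item Since $\NHOAPPSKW$, $\widetilde S_{\rm skw}$ and $\widetilde K_H^{\rm EC}$ are antisymmetric in the block sense of \Cref{def:block-sym-skw}, we have $(Z_{\alpha\beta})^\top=-Z_{\beta\alpha}$ for each of them.
\item It follows that $P^{\rm EC}_{\alpha\beta}=-P^{\rm EC}_{\beta\alpha}$, and hence the double sum $\sum_{\alpha,\beta}P^{\rm EC}_{\alpha\beta}$ vanishes.
\end{itemize}
Summing \cref{eq:h-ecf-mode-energy} over $\alpha$ then gives \cref{eq:h-ecf-total-energy}.
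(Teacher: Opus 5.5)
Your proposal is correct and follows the paper's proof. The paper likewise obtains the pre-closure equation $\dUHATOp=-(\NHOAPPSKW-\widetilde S_{\rm skw})\UHATOp+\widetilde C_H\UHATOp+\tfrac12\BHOGAPP\UHATOp-\QHGNUMO$ from the orthogonalized CPG equation and the $H$--SBP identity; you simply spell out the symmetric-part bookkeeping, including absorbing $-\BHODAPP\UHATOp$ into $\QHGNUMO$. It then replaces $\widetilde C_H\UHATOp$ by $\widetilde C_H^{\rm EC}\UHATOp$ as the defining step of the $H$--ECF closure (exactly how you treat \Cref{def:h-ecf}), applies \Cref{lem:h-ecf-skew-lift}, and obtains the energy statements from block antisymmetry.
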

\begin{proof}
  The orthogonalized CPG equation and the \(H\)--SBP identity give
  \(
    \dUHATOp
    =
    -\left(
      \NHOAPPSKW-\widetilde S_{\rm skw}
    \right)\UHATOp
    +\widetilde C_H\UHATOp
    +\tfrac12\BHOGAPP\UHATOp
    -\QHGNUMO
  \).
  The \(H\)--ECF closure replaces
  \(\widetilde C_H\UHATOp\) by
  \(\widetilde C_H^{\rm EC}\UHATOp\).
  By \Cref{lem:h-ecf-skew-lift},
  \(
    \widetilde C_H^{\rm EC}\UHATOp
    =
    \widetilde K_H^{\rm EC}\UHATOp
  \).
  Substitution gives \cref{eq:h-ecf-state}.
  Taking the inner product of its \(\alpha\)-th block with
  \(\UHATOp_\alpha\) gives
  \cref{eq:h-ecf-mode-energy}.
  Since
  \(
    \NHOAPPSKW
    -\widetilde S_{\rm skw}
    -\widetilde K_H^{\rm EC}
  \)
  is antisymmetric,
  \(P_{\alpha\beta}^{\rm EC}=-P_{\beta\alpha}^{\rm EC}\).
  Summing \cref{eq:h-ecf-mode-energy} over \(\alpha\) gives
  \cref{eq:h-ecf-total-energy}.
\end{proof}

\Cref{thm:h-ecf-exchange} gives the antisymmetric volume exchange.
It remains to choose a numerical energy flux that cancels the
internal-face contributions.  Let
\(f=\Omega^-\cap\Omega^+\) be an internal face with outward normal
\(n\) from \(\Omega^-\), and let \(\Up^\pm\) denote the traces.
Using \Cref{def:cpg}, set
\(
  G_{k,p+1}:=Y_{A,k,p+1}+Y_{\Delta A,k,p+1}
\)
and
\(
  G_{n,p+1}^\pm:=G_{k,p+1}^\pm n_k
\).
Let
\(\widehat{\mathcal G}_{H,n}(\Up^-,\Up^+;n)\)
be a consistent shared flux, used with the opposite orientation on the
neighboring element, and satisfying
\begin{equation}
  (\Up^- - \Up^+)^\top
  \widehat{\mathcal G}_{H,n}(\Up^-,\Up^+;n)
  =
  \tfrac12(\Up^-)^\top G_{n,p+1}^-\Up^-
  -
  \tfrac12(\Up^+)^\top G_{n,p+1}^+\Up^+.
  \label{eq:gn-balance}
\end{equation}
Then the two element contributions on \(f\) cancel.  If the two
neighboring elements are regarded as a single composite modal system,
\cref{eq:gn-balance} states that the face contribution cancels
internally within this system.  Equivalently, the face action is
orthogonal to the combined coefficient vector of the two elements.
By the same antisymmetric-lift argument as in \Cref{lem:skew-lift}, it
admits a pairwise exchange representation in the combined modal space
of \(\Omega^-\cup\Omega^+\).  Thus, the interface term is a
finite-mode counterpart of the interelement pairwise exchange in
\Cref{prop:interface-conservation}.

One possible flux satisfying \cref{eq:gn-balance} is the following
Tadmor-type choice \cite{Tadmor1987,Tadmor2003Acta}:
\begin{equation}
  \widehat{\mathcal G}_{H,n}(\Up^-,\Up^+;n)
  :=
  \mathcal G_{\rm cen}
  +\lambda(\Up^- - \Up^+),
  \qquad
  \mathcal G_{\rm cen}
  :=
  \tfrac12\left(
    G_{n,p+1}^-\Up^-+G_{n,p+1}^+\Up^+
  \right).
  \label{eq:gn-flux}
\end{equation}
For \(\Up^-\neq\Up^+\), substitution into
\cref{eq:gn-balance} gives
\begin{equation*}
  \lambda
  =
  \frac{
    \tfrac12(\Up^-)^\top G_{n,p+1}^-\Up^-
    -
    \tfrac12(\Up^+)^\top G_{n,p+1}^+\Up^+
    -
    (\Up^- - \Up^+)^\top\mathcal G_{\rm cen}
  }{
    \|\Up^- - \Up^+\|^2
  }.
\end{equation*}
If \(\Up^-=\Up^+\) and
\(G_{n,p+1}^-=G_{n,p+1}^+\), set \(\lambda=0\).

\subsection{Implementation in the fixed Galerkin basis}

Since the orthogonalized equation \cref{eq:h-ecf-state} includes
\(\dMHAPP\) through \(\widetilde S\), it does not provide an explicit
evolution equation for \(\UHATOp\) alone.
The system must therefore be closed through its relation to the
fixed-basis coefficients.
The fixed-basis coefficients \(\UHATp\) directly determine
\(\MHAPP\), \(\widetilde R\), and \(\UHATOp\), whereas
\(\UHATOp\) does not directly determine \(\UHATp\) because
\(\widetilde R\) itself depends on \(\UHATp\).
We therefore use \(\UHATp\) as the time-integration variable.

\begin{proposition}[Equivalent fixed-basis coefficient equation]
  \label{prop:fixed-basis-velocity}
  For a coefficient \(V\in\mathbb R^{mN_p}\),
  define the linear functional
  \[
    {\mathcal L_H}(V)
    :=
    \frac{
      \UHATp{}^\top\mathcal M_H[V]\UHATp
    }{
      2\UHATp{}^\top\MHAPP\UHATp
    },
    \qquad
    \mathcal M_H[V]
    :=
    Q_{3p}\!\left[
      \Phi^\top
      D_{\Up} H(\Up)[\Phi V]
      \Phi
    \right].
  \]
  Also, set
  \[
    F_H
    :=
    \widetilde R^{-\top}
    \left[
      (\BHAPP-\NHAPP)\UHATp-\QHNUM
    \right]
    -\gamma_H\UHATOp,
    \qquad
    \gamma_H
    :=
    \frac{
      \UHATOp{}^\top
      (\VHOAPP+\BHODAPP)
      \UHATOp
    }{
      2\|\UHATOp\|^2
    }.
  \]
  Then the \(H\)--ECF/CPG equation is equivalent to
  \begin{equation}
    \widetilde R\dUHATp
    +\UHATOp\,{\mathcal L_H}(\dUHATp)
    =F_H.
    \label{eq:rank-one-velocity}
  \end{equation}
\end{proposition}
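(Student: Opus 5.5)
The plan is to show that \cref{eq:h-ecf-state} differs from the uncorrected orthogonalized CPG equation by exactly the single term \(-\widetilde L_H\UHATOp\), and then to rewrite that term in the fixed basis. Throughout we assume \(\UHATOp\neq0\), as in \Cref{lem:state-projection,lem:h-ecf-skew-lift}.

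\textbf{Step 1 (uncorrected equation).} Start from the CPG version of \cref{eq:h-weak-pg}:
\[
  \MHAPP\dUHATp+\NHAPP\UHATp=\BHAPP\UHATp-\QHNUM .
\]
Multiply by \(\widetilde R^{-\top}\) and use \(\MHAPP=\widetilde R^\top\widetilde R\). This gives
\[
  \widetilde R\dUHATp=\widetilde R^{-\top}\bigl[(\BHAPP-\NHAPP)\UHATp-\QHNUM\bigr].
\]
Adding \(\widetilde S\UHATOp\) to both sides and using \cref{eq:true-frame-velocity} yields \(\dUHATOp=-(\NHOAPP-\widetilde S)\UHATOp+\BHOAPP\UHATOp-\QHNUMO\).

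Next split \(\NHOAPP\) and \(\widetilde S\) into symmetric and skew parts. Apply the orthogonalized \(H\)--SBP identity (\Cref{cor:cpg-h-sbp} under the congruence with \(\widetilde R\)) for both \(X=A\) and \(X=\Delta A\). Use \(\widetilde S+\widetilde S^\top=\dMHOAPP\) and the definitions of \(\BHOGAPP\) and \(\QHGNUMO\). This should reproduce the expression at the start of the proof of \Cref{thm:h-ecf-exchange}:
\[
  \dUHATOp=-(\NHOAPPSKW-\widetilde S_{\rm skw})\UHATOp+\widetilde C_H\UHATOp+\tfrac12\BHOGAPP\UHATOp-\QHGNUMO .
\]
I expect this bookkeeping to be the main obstacle. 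The \(\VHODAPP\) and \(\BHODAPP\) contributions must recombine exactly into \(\widetilde C_H\) and the \(\BHODAPP\UHATOp\) piece of \(\QHGNUMO\). I would check it by expanding both sides in terms of \(\NHOAPP,\BHOAPP,\VHOAPP,\BHODAPP\) only.

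\textbf{Step 2 (closure term).} The \(H\)--ECF closure replaces \(\widetilde C_H\UHATOp\) by \(\widetilde K_H^{\rm EC}\UHATOp=\widetilde C_H^{\rm EC}\UHATOp=(\widetilde C_H-\widetilde L_H)\UHATOp\). By \Cref{lem:state-projection},
\[
  \widetilde L_H\UHATOp=\rho_H\|\UHATOp\|^{-2}\UHATOp .
\]
Hence \cref{eq:h-ecf-state} equals the uncorrected equation minus \(\rho_H\|\UHATOp\|^{-2}\UHATOp\). Undoing Step 1, where the \(\widetilde S\UHATOp\) terms cancel, the closed system is equivalent to
\[
  \widetilde R\dUHATp=\widetilde R^{-\top}\bigl[(\BHAPP-\NHAPP)\UHATp-\QHNUM\bigr]-\frac{\rho_H}{\|\UHATOp\|^2}\UHATOp .
\]
Each step is an invertible linear manipulation, since \(\widetilde R\) is nonsingular by \Cref{lem:hq-mass-spd}. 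This gives equivalence in both directions.

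\textbf{Step 3 (splitting \(\rho_H\)).} By \cref{eq:ch-true,eq:rhoh-true},
\[
  \rho_H=\tfrac12\UHATOp{}^\top\dMHOAPP\UHATOp+\tfrac12\UHATOp{}^\top(\VHOAPP+\BHODAPP)\UHATOp ,
\]
and the second term over \(\|\UHATOp\|^2\) is \(\gamma_H\). For the first term, \(\UHATOp{}^\top\dMHOAPP\UHATOp=\UHATp{}^\top\dMHAPP\UHATp\) and \(\|\UHATOp\|^2=\UHATp{}^\top\MHAPP\UHATp\). By \cref{eq:true-mme-q3p} with \(\partial_t\Up=\Phi\dUHATp\), and by linearity of the directional derivative, \(\dMHAPP=\mathcal M_H[\dUHATp]\). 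So the first term over \(\|\UHATOp\|^2\) is \({\mathcal L_H}(\dUHATp)\).

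Substituting into the Step 2 equation and moving \(\UHATOp\,{\mathcal L_H}(\dUHATp)\) to the left-hand side gives \cref{eq:rank-one-velocity}. This equation is linear in \(\dUHATp\), because \({\mathcal L_H}\) is a linear functional.
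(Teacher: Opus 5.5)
Your proposal is correct and follows essentially the paper's route. The paper substitutes \cref{eq:true-frame-velocity} and \(\widetilde S=\widetilde S_{\rm skw}+\tfrac12\dMHOAPP\) directly into \cref{eq:h-ecf-state}, cancels the \(\tfrac12\dMHOAPP\UHATOp\) terms against \(\widetilde C_H\), writes \(\widetilde L_H\UHATOp=[\mathcal L_H(\dUHATp)+\gamma_H]\UHATOp\), and uses the \(H\)--SBP identity; this is the same algebra you organize as ``uncorrected CPG equation minus the rank-one term.'' One small point: only the \(X=A\) identity of \Cref{cor:cpg-h-sbp} is needed, because the \(\BHODAPP\) terms enter only through the definitions of \(\widetilde C_H\), \(\BHOGAPP\), and \(\QHGNUMO\) and cancel identically (\(+\tfrac12+\tfrac12-1\)), so no \(\Delta A\)-volume operator ever appears.
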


\begin{proof}
  In \cref{eq:h-ecf-state}, use
  \cref{eq:true-frame-velocity} and
  \(
    \widetilde S
    =
    \widetilde S_{\rm skw}
    +\tfrac12\dMHOAPP
  \).
  Since
  \(
    \widetilde C_H^{\rm EC}\UHATOp
    =
    \tfrac12
    (\dMHOAPP+\VHOAPP+\BHODAPP)\UHATOp
    -\widetilde L_H\UHATOp
  \),
  the terms containing
  \(\tfrac12\dMHOAPP\UHATOp\) cancel.
  Moreover,
  \(
    \widetilde L_H\UHATOp
    =
    [{\mathcal L_H}(\dUHATp)+\gamma_H]\UHATOp
  \).
  The \(H\)--SBP identity gives
  \cref{eq:rank-one-velocity}.
\end{proof}

For a fixed state \(\UHATp\), \cref{eq:rank-one-velocity} is linear in
\(\dUHATp\).
Thus, its computation requires no nonlinear iteration.
Once this linear system is solved, the resulting \(\dUHATp\) is used
to advance \(\UHATp\) in time.

\section{Fixed-time construction defects of modal energy exchange}
\label{sec:exchange-defect}
This section estimates the fixed-time construction defects between the
practical \(H\)--ECF/CPG construction and its exact-integration
counterpart \(H\)--ECF/EIG.
The two constructions are evaluated at the same fixed-basis state
\(\UHATp\), the same coefficient direction
\(V\in\mathbb R^{mN_p}\), and fixed \(p\).
In this section, \(V\) represents the time-evolution direction in
coefficient space.
The resulting defects quantify differences in modal energy exchange at
the operator level, excluding differences between trajectories.
The superscript \({\rm ref}\) denotes the corresponding reference
quantity.

\subsection{Comparison frame and defect measures}
\label{subsec:comparison-frame-defects}

The practical directional MME is \(\mathcal M_H[V]\) in
\Cref{prop:fixed-basis-velocity}. Its exact-integration counterpart is
\[
  \mathcal M_H^{\rm ref}[V]
  :=
  \int_\Omega
  \Phi^\top D_{\Up} H(\Up)[\Phi V]\Phi\,dV .
  \]
The compatibility, connection, and antisymmetrized operators are
constructed using the MME along the solution trajectory. They are
\(\widetilde C_H\), \(\widetilde S\), and
\(\widetilde K_H^{\rm EC}\) for \(H\)--ECF/CPG, and
\(C_H^{\rm ref}\), \(S^{\rm ref}\), and
\(K_H^{{\rm EC},{\rm ref}}\) for \(H\)--ECF/EIG, respectively.

By \Cref{lem:hq-mass-spd}, \(\MHAPP\), constructed using a
finite-degree quadrature rule, is SPD. The same holds immediately for
\(\MHref\), defined by exact integration.
Accordingly, both constructions admit the \(H\)-orthogonalization
defined in \Cref{def:h-orth}.
However, the orthogonalized frames of the two constructions differ.
We therefore compare their exchange bilinear forms in the reference frame.
With \(Q:=\widetilde R(R^{\rm ref})^{-1}\), we have
\(\UHATOp=Q\UHATOpref\), and define
\begin{equation}
  (Y^{(o)})^\sharp:=Q^\top Y^{(o)}Q .
  \label{eq:sharp-frame}
\end{equation}
This pullback is exact and introduces no construction defect.  Define
\begin{align}
  \Delta N
  &:= (\NHOAPPSKW)^\sharp-N_{H,{\rm skw}}^{(o),{\rm ref}},\notag\\
  \Delta S
  &:= (\widetilde S_{\rm skw}[V])^\sharp-S_{\rm skw}^{\rm ref}[V],
  \qquad
  \Delta K
  := (\widetilde K_H^{\rm EC}[V])^\sharp -K_H^{{\rm EC},{\rm ref}}[V]. \notag
\end{align}

For a fixed mode index \(j\in\{1,\dots,N_p\}\), let
\(\Psi^{\rm ref}:=\Phi(R^{\rm ref})^{-1}\), and let
\(\mathsf P_j\) be the block-diagonal projector onto the \(j\)th modal
block.  The corresponding split is
\(\Up{}_j^{\rm ref}:=\Psi^{\rm ref}\mathsf P_j\UHATOpref\) and
\(\Up{}_{-j}^{\rm ref}:=\Psi^{\rm ref}(I-\mathsf P_j)\UHATOpref\).
For each fixed \(j\), \Cref{ass:up-bounds} gives uniform bounds on
these fields, their first derivatives, and their traces.  We measure
the exchange defect between the \(j\)th block and its complement by
\begin{equation}
  \mathcal E_j(Y)
  :=
  \frac{
    \left|
    (\UHATOpref)^\top
    (\mathsf P_jY_{\rm skw}-Y_{\rm skw}\mathsf P_j)
    \UHATOpref
    \right|
  }{
    \|\UHATOpref\|_2^2
  },
  \quad
  \UHATOpref\ne0 .
  \label{eq:mode-defect-functional}
\end{equation}

\subsection{Estimates of the defects}
\label{subsec:defect-estimates}
We estimate the projection defect and its contribution to \(N\),
followed by the defects in the connection and the \(H\)--ECF lift.

\begin{lemma}[Quadrature-induced projection defect]
  \label{lem:projection-defect}
  For \(X\in\{A,\Delta A\}\) and \(k\in\{1,\dots,d\}\), define
  \(
  E_k^X :=
  \Pi_{L^2,p+1}^{Q_{\mathrm{hi}(n)}}
  [H(\Up)X_k(\Up)]-
  \Pi_{L^2,p+1}^{Q_{\mathrm{hi}(\infty)}}
  [H(\Up)X_k(\Up)]
  \).
  Then there exists a positive function \(\varepsilon_X(n)\),
  independent of \(h\), such that \(\varepsilon_X(n)\to0\) as
  \(n\to\infty\) and
  \begin{align}
    \sum_{k=1}^d\|E_k^X\|_{L^2(\Omega);F}
    &\le
    \varepsilon_X(n)h^{p+2+d/2},
    \label{eq:proj-def-vol}\\
    h^{-1/2}\sum_{k=1}^d
    \|E_k^X|_{\partial\Omega}\|_{L^2(\partial\Omega);F}
    &\le
    \varepsilon_X(n)h^{p+1+d/2}.
    \label{eq:proj-def-face}
  \end{align}
\end{lemma}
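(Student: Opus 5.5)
The plan is to write the difference of the two discrete projections as a quadrature-error functional acting on the projection residual, and then bound that functional by a scaling argument on a reference element. Fix \(X\) and \(k\). Set \(Y:=H(\Up)X_k(\Up)\) and \(Y^\ast:=\Pi_{L^2,p+1}Y\), where the exact projection is the \(Q_{\mathrm{hi}(\infty)}\) one. Let \(\{\chi_\mu\}\) be an \(L^2\)-orthonormal basis of \(V^{p+1}\).

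For \(n\ge 2p+2\) the discrete mass matrix of \(V^{p+1}\) under \(Q_n\) is the identity, so
\[
  E_k^X=\sum_\mu \chi_\mu\,\bigl(Q_n-\textstyle\int_\Omega\bigr)[\chi_\mu Y].
\]
Since \(\chi_\mu Y^\ast\in V^{2p+2}\) is integrated exactly, this can be replaced by
\[
  E_k^X=\sum_\mu \chi_\mu\,\bigl(Q_n-\textstyle\int_\Omega\bigr)[\chi_\mu\,\delta Y],
  \qquad \delta Y:=Y-Y^\ast .
\]
The proof reduces to bounding the quadrature-error functional \(\ell_n(g):=(Q_n-\int_\Omega)[g]\) for \(g=\chi_\mu\delta Y\).

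The key step is to pull back to a reference element \(\widehat\Omega\) via the affine map (using \Cref{ass:shape-regular}). On \(\widehat\Omega\), \(\widehat\chi_\mu=|\Omega|^{1/2}\chi_\mu\) is an \(O(1)\) polynomial, and \(\ell_n(g)=|\Omega|/|\widehat\Omega|\cdot\widehat\ell_n(\widehat g)\). On the reference element I would write
\[
  |\widehat\ell_n(\widehat g)|\le \|\widehat\ell_n\|_{\ast}\,\|\widehat g\|_{L^\infty},
\]
and define \(\varepsilon_X(n)\) through the quadrature error of the reference rule on the class of pulled-back residuals \(\widehat{\delta Y}/h^{p+1}\). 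By \Cref{ass:field-proj-error} this class is uniformly bounded and, in the well-resolved sense, equicontinuous. Since \(Q_n\) is \(n\)-exact with positive weights, it converges for every continuous function, and it converges uniformly on such a compact class by a Weierstrass density argument. Hence \(\varepsilon_X(n)\to0\) independently of \(h\).

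Collecting the scalings: \(|\ell_n(\chi_\mu\delta Y)|\lesssim |\Omega|\cdot|\Omega|^{-1/2}\,\varepsilon(n)\,h^{p+1}\), and \(\|\chi_\mu\|_{L^2}=1\). This gives \(\|E_k^X\|_{L^2(\Omega)}\lesssim\varepsilon(n)h^{p+1+d/2}\).

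I expect the main obstacle to be the extra power of \(h\) in \cref{eq:proj-def-vol}. To obtain it I would invoke \Cref{ass:field-proj-error} as in the proof of \Cref{prop:cpg-accuracy}, namely \(\|\tau\|_{L^\infty}\le Ch^{p+2}\) for the degree-\((p+1)\) projection residual, instead of \(h^{p+1}\). This upgrades the bound to \(\varepsilon(n)h^{p+2+d/2}\). Summing over \(k\) absorbs \(d\) into \(\varepsilon_X\).

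The face bound \cref{eq:proj-def-face} then follows from the discrete polynomial trace inverse inequality on shape-regular elements, \(\|E|_{\partial\Omega}\|_{L^2(\partial\Omega)}\le Ch^{-1/2}\|E\|_{L^2(\Omega)}\) for \(E\in V^{p+1}\). This yields \(h^{-1/2}\|E|_{\partial\Omega}\|\le C h^{-1}\varepsilon(n)h^{p+2+d/2}\), and enlarging \(\varepsilon_X\) by the constant gives the stated bound.

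The delicate point remains making \(\varepsilon_X(n)\) genuinely \(h\)-independent. This relies on the uniform-in-\(h\) compactness of the rescaled residual family, which I would extract from \Cref{ass:up-bounds,ass:field-proj-error}. If that is too weak, I would fall back on assuming additional smoothness so that standard Gauss-type error bounds give \(\varepsilon(n)\sim n^{-s}\).
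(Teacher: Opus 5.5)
Your proposal is correct and follows essentially the same route as the paper: both write the defect as the \(Q_{\mathrm{hi}(n)}\) quadrature error acting on the exact degree-\((p+1)\) projection residual \(\tau_k^X\) (your \(\delta Y\)) paired with \(V^{p+1}\) polynomials, both bound it with \(\|\tau_k^X\|_{L^\infty(\Omega);F}\le Ch^{p+2}\), and both obtain the face bound from the inverse trace inequality. The differences are minor: you expand in an orthonormal basis, whereas the paper uses the identity \(\|E_k^X\|_{L^2(\Omega);F}^2=(Q_{\mathrm{hi}(n)}-Q_{\mathrm{hi}(\infty)})[\tau_k^X:E_k^X]\), and your scaling/compactness argument actually justifies the \(h\)-independence of \(\varepsilon_X(n)\), which the paper only asserts.
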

\begin{proof}
  Set \(F_k^X:=H(\Up)X_k(\Up)\) and
  \(\tau_k^X:=(I-\Pi_{L^2,p+1}^{Q_{\mathrm{hi}(\infty)}})F_k^X\).
  Since both projections are the identity on
  \((V^{p+1})^{m\times m}\),
  \(E_k^X=\Pi_{L^2,p+1}^{Q_{\mathrm{hi}(n)}}\tau_k^X\).
  The projection orthogonality and exactness for
  \(E_k^X:E_k^X\) give
  \(\|E_k^X\|_{L^2(\Omega);F}^2
  =(Q_{\mathrm{hi}(n)}-Q_{\mathrm{hi}(\infty)})
  [\tau_k^X:E_k^X]\).
  The quadrature-error estimate, \Cref{ass:field-proj-error}, and
  \(\|\tau_k^X\|_{L^\infty(\Omega);F}\le Ch^{p+2}\) give
  \(\|E_k^X\|_{L^2(\Omega);F}
  \le\varepsilon_X(n)h^{p+2+d/2}\), where
  \(\varepsilon_X(n)\to0\).  Summation over \(k\) gives
  \cref{eq:proj-def-vol}, and the inverse trace inequality gives
  \cref{eq:proj-def-face}.
\end{proof}

\begin{proposition}[\(N\)-operator exchange defect]
  \label{prop:n-defect}
  For every fixed mode index \(j\), there exists a positive function
  \(C_{j,N}^{\rm hi}(n)\), independent of \(h\), such that
  \(C_{j,N}^{\rm hi}(n)\to0\) as \(n\to\infty\) and
  \begin{equation}
    \mathcal E_j(\Delta N)
    \le C_{j,N}^{\rm hi}(n)h^{p+2}.
    \label{eq:EdN-estimate}
  \end{equation}
\end{proposition}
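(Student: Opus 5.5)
The estimate will come from rewriting the pulled-back difference \(\Delta N\) as an exact-integration bilinear form in the reference frame. The integrand will involve only the projection defect \(E_k^A\) of \Cref{lem:projection-defect}, and we then apply the volume and face bounds \cref{eq:proj-def-vol,eq:proj-def-face}.

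\textbf{Step 1: exact pullback.} Since \(\UHATOp=Q\UHATOpref\) and \(Q^\top\widetilde R^{-\top}\widetilde N\widetilde R^{-1}Q=(R^{\rm ref})^{-\top}\widetilde N(R^{\rm ref})^{-1}\), we have
\[
(\NHOAPPSKW)^\sharp=(R^{\rm ref})^{-\top}\NHAPP_{\rm skw}(R^{\rm ref})^{-1},
\]
and the same holds for the reference operator. Here \(\NH_{\rm skw}\) denotes the exact-integration operator, matching the \(N_{H,{\rm skw}}^{(o),{\rm ref}}\) notation of the statement. The reference operator should be understood as CPG with \(Q_{\mathrm{hi}(\infty)}\), i.e.\ exact projection \(\Pi_{L^2,p+1}\). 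The paper's \(H\)--ECF/EIG counterpart uses the same projected field structure, and only the nonlinear-field quadrature differs; I will adopt this reading.

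\textbf{Step 2: reduce \(\Delta N\) to \(E_k^A\).} In CPG the volume integrand \(\Phi_\alpha^\top Y_{A,k,p+1}\partial_{x_k}\Phi_\beta\) has degree at most \(3p\), so \(Q_{3p}\) integrates it exactly. Hence \(\Delta N\) is the congruence by \((R^{\rm ref})^{-1}\) of the skew part of
\[
\int_\Omega \Phi^\top E_k^A\,\partial_{x_k}\Phi\,dV .
\]
Using the \(H\)--SBP identity (\Cref{cor:cpg-h-sbp}) for both projected fields, with \(E_k^A\) symmetric, this skew part equals
\[
\tfrac12\int_\Omega\bigl(\Phi^\top E_k^A\partial_{x_k}\Phi-\partial_{x_k}\Phi^\top E_k^A\Phi\bigr)\,dV .
\]
Equivalently, it can be written as an \(N\)-term plus half of the volume and boundary terms built from \(E_k^A\).

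\textbf{Step 3: evaluate the defect functional.} By antisymmetry, \(\mathcal E_j\) reduces to \(2|(\UHATOpref)^\top\mathsf P_j\Delta N\,\UHATOpref|/\|\UHATOpref\|^2\), which is a cross term between the \(j\)-th block and its complement. Written in physical space with \(\Up{}_j^{\rm ref}\) and \(\Up{}_{-j}^{\rm ref}\), it becomes
\[
\tfrac12\int_\Omega\bigl[(\Up{}_j^{\rm ref})^\top E_k^A\partial_{x_k}\Up{}_{-j}^{\rm ref}-(\partial_{x_k}\Up{}_j^{\rm ref})^\top E_k^A\Up{}_{-j}^{\rm ref}\bigr]\,dV .
\]
We bound this integral by \(\|E_k^A\|_{L^2}\) times the \(L^2\) norms of the fields and their derivatives. \Cref{ass:up-bounds} gives \(L^\infty\) bounds on these fields, so their \(L^2(\Omega)\) norms are \(O(h^{d/2})\).

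\textbf{Step 4: normalization (main obstacle).} \Cref{ass:h-bounds} and the orthonormality of the basis give \(\|\UHATOpref\|^2\simeq\int\Up^\top H\Up\simeq\|\Up\|_{L^2}^2\). We need to convert \(h^{d/2}\varepsilon_A(n)h^{p+2+d/2}\) into a bound relative to \(\|\UHATOpref\|^2\). The difficulty is to avoid losing a factor \(h^{-1}\) from \(\partial_{x_k}\). Here I would use the uniform \(L^\infty\) gradient bound \(C_1\) directly, rather than an inverse inequality, so the derivative costs no power of \(h\). I would then assume \(\|\Up\|_{L^2}^2\gtrsim h^d\) for fixed \(j\), i.e.\ a nondegenerate state, and absorb the ratio into \(C_{j,N}^{\rm hi}(n):=C_j\varepsilon_A(n)\). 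This yields \(\mathcal E_j(\Delta N)\le C_{j,N}^{\rm hi}(n)h^{p+2}\), with \(C_{j,N}^{\rm hi}(n)\to0\) inherited from \(\varepsilon_A(n)\).

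If the boundary form from Step 2 is used instead, the face term is controlled by \cref{eq:proj-def-face} and trace bounds on \(\Up{}_{\pm j}^{\rm ref}\). This gives the same order, because the \(h^{-1/2}\) in \cref{eq:proj-def-face} is balanced by the face measure \(h^{(d-1)/2}\).
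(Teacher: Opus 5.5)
Your argument is essentially the paper's: exactness of \(Q_{3p}\) on the two \(V^{p+1}\)-projected fields leaves only \(E_k^A\) in \(\Delta N\), the physical-field form of \(\mathcal E_j\) with the \(L^\infty\) bounds of \Cref{ass:up-bounds} gives \(\mathcal E_j(\Delta N)\le C_jh^{-d/2}\sum_k\|E_k^A\|_{L^2(\Omega);F}\), and \cref{eq:proj-def-vol} finishes the proof; your explicit nondegeneracy assumption \(\|\Up\|_{L^2(\Omega)}^2\gtrsim h^d\) is exactly what the paper's constant \(C_j\) absorbs without comment. Your closing remark is incorrect, however: after the SBP split, the face term (bounded via \cref{eq:proj-def-face}) and the \(V\)-term (where a derivative falls on \(E_k^A\)) are each bounded only by \(\varepsilon_A(n)h^{p+1+d}\), i.e.\ \(h^{p+1}\) after normalization, and they cancel to the next order only in combination, so the \(h^{p+2}\) rate requires the direct skew form of your Step~3.
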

\begin{proof}
  Both projected coefficient fields belong to
  \((V^{p+1})^{m\times m}\), so the \(Q_{3p}\)-construction is
  exact for their \(N\)-operator contributions.  Thus, \(\Delta N\)
  is generated only by \(E_k^A\).  The physical-field form of
  \cref{eq:mode-defect-functional}, together with
  \Cref{ass:up-bounds}, gives
  \(\mathcal E_j(\Delta N)\le C_jh^{-d/2}
  \sum_k\|E_k^A\|_{L^2(\Omega);F}\).
  Applying \cref{eq:proj-def-vol} proves
  \cref{eq:EdN-estimate}.
\end{proof}

\begin{proposition}[Connection and lift defects]
  \label{prop:s-k-defect}
  For every fixed mode index \(j\), there exist constants
  \(C_{j,S}^{3p},C_{j,K}^{3p}\), independent of \(h\) and \(n\),
  and a positive function \(C_{j,K}^{\rm hi}(n)\), independent of
  \(h\), such that
  \begin{equation}
    \mathcal E_j(\Delta S)
    \le C_{j,S}^{3p}h^{p+1},
    \qquad
    \mathcal E_j(\Delta K)
    \le
    \bigl(C_{j,K}^{3p}+C_{j,K}^{\rm hi}(n)\bigr)h^{p+1},
    \label{eq:s-k-defect}
  \end{equation}
  with \(C_{j,K}^{\rm hi}(n)\to0\) as \(n\to\infty\).
\end{proposition}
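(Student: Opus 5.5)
We bound the two defects separately, in each case first controlling an unnormalized matrix difference in the reference frame. We then convert it into an $\mathcal E_j$ bound through the physical-field form of \cref{eq:mode-defect-functional}, as in the proof of \Cref{prop:n-defect}.

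\textbf{Connection defect.} We begin with the mass matrices. By \Cref{prop:h-mass-degree}, $\MHAPP-\MHref=O(h^{p+1})$ blockwise. Since both are SPD with eigenvalues bounded uniformly by \Cref{ass:h-bounds} and the $L^2$-orthonormality of the basis, the Cholesky factors satisfy $\widetilde R-R^{\rm ref}=O(h^{p+1})$. Cholesky factorization is smooth on the SPD cone, so its Fréchet derivative is uniformly bounded there. Hence $Q=I+O(h^{p+1})$.

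Next we compare the directional MMEs $\mathcal M_H[V]$ and $\mathcal M_H^{\rm ref}[V]$ by repeating the argument of \Cref{prop:h-mass-degree}. We split $D_{\Up}H(\Up)[\Phi V]$ into its degree-$p$ projection plus a remainder. The projected part is integrated exactly by $Q_{3p}$. The remainder is $O(h^{p+1})$ in $L^\infty$ by \Cref{ass:field-proj-error}, and the Cauchy--Schwarz bound with $Q_{3p}[\phi_\alpha^2]=1$ gives $\mathcal M_H[V]-\mathcal M_H^{\rm ref}[V]=O(h^{p+1})$.

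Congruence by $\widetilde R^{-\top},\widetilde R^{-1}$ versus $(R^{\rm ref})^{-\top},(R^{\rm ref})^{-1}$ then gives $\dMHOAPP-\dot M_H^{(o),{\rm ref}}=O(h^{p+1})$. The map $X\mapsto\operatorname{triu}(X)-\tfrac12\operatorname{diag}(X)$ is linear and bounded, and the pullback $(\cdot)^\sharp$ perturbs only by $Q-I=O(h^{p+1})$ times bounded quantities. Together these give $\Delta S=O(h^{p+1})$ in operator norm. The resulting constant is tied to the $Q_{3p}$ layer and is independent of $n$.

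Finally, $\mathcal E_j(Y)\le 2\|\mathsf P_j\|\,\|Y_{\rm skw}\|$, and $\|\mathsf P_j\|\le1$, so $\mathcal E_j(\Delta S)\le C_{j,S}^{3p}h^{p+1}$.

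\textbf{Lift defect.} Write $\widetilde C_H=\tfrac12(\widetilde S+\widetilde S^\top+\VHOAPP+\BHODAPP)$ and likewise for the reference. The $S$-part was handled above. For $\VHOAPP$ and $\BHODAPP$, \Cref{prop:cpg-accuracy} gives an $O(h^{p+1})$ difference from the exact-integration operators built with the exactly projected field. \Cref{lem:projection-defect} adds the projection mismatch, which is $\varepsilon_{\Delta A}(n)h^{p+1}$ after basis scaling (volume and face estimates \cref{eq:proj-def-vol,eq:proj-def-face}); this contributes the $C^{\rm hi}_{j,K}(n)$ term. Pulling back by $Q$ costs another $O(h^{p+1})$. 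Hence
\[
(\widetilde C_H)^\sharp-C_H^{\rm ref}=\bigl(O(1)+o_n(1)\bigr)h^{p+1}.
\]

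We then pass to $\widetilde C_H^{\rm EC}$ and $\widetilde K_H^{\rm EC}$. The scalar $\rho_H$, the rank-one $\widetilde L_H$ in \cref{eq:lh-projection}, and the lift \cref{eq:kh-ecf} are rational in $(\widetilde C_H,\UHATOp)$. Their denominators are powers of $\|\UHATOp\|$, which are comparable to $\|\UHATOpref\|$ because $Q=I+O(h^{p+1})$.

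The main obstacle is that the pullback does not commute with these normalizations. In particular, $\|Q\UHATOpref\|\neq\|\UHATOpref\|$, and $\widetilde L_H^\sharp$ is not the reference $L_H$ applied to a pulled-back $C$. We would handle this by writing $\Delta K$ as a telescoping sum of differences of the building blocks
\[
\frac{C w w^\top}{\|w\|^2},\qquad \frac{(w^\top C w)\,w w^\top}{\|w\|^4},
\]
where each block is Lipschitz in $(C,w)$ in a neighborhood where $\|w\|$ is bounded away from $0$. The cancellation of scaling by $\|\UHATOpref\|^2$ in $\mathcal E_j$ makes these bounds uniform and homogeneous of degree $0$. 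This yields
\[
\|\Delta K\|\le\bigl(C_{j,K}^{3p}+C_{j,K}^{\rm hi}(n)\bigr)h^{p+1},
\]
and hence the claimed bound on $\mathcal E_j(\Delta K)$.

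Notice that the order here is $h^{p+1}$ rather than the $h^{p+2}$ of \Cref{prop:n-defect}. The $V$- and $B$-operators involve $\nabla E_k^{\Delta A}$ or face traces, which lose one power of $h$ relative to the volume estimate \cref{eq:proj-def-vol}. In addition, the $3p$-layer mass defect is only $O(h^{p+1})$.
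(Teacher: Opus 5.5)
Your proposal follows essentially the same route as the paper's proof. It uses the same ingredients: $Q=I+O(h^{p+1})$ from \Cref{prop:h-mass-degree} and the Cholesky map; the directional-MME defect via the exact $V^p$-projection of $D_{\Up}H(\Up)[\Phi V]$ and $Q_{3p}$-exactness; linearity of the connection formula for $\Delta S$; projection-defect bounds (\cref{eq:proj-def-vol} with an inverse estimate, and \cref{eq:proj-def-face}) for the $V_H$- and $B_{H,\Delta}$-parts of $(\widetilde C_H)^\sharp-C_H^{\rm ref}$; and Lipschitz continuity of the normalized lift, which your telescoping argument spells out.

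One step is looser than you write, though the paper is equally terse there. The pullback of $\widetilde C_H$ is an exact congruence and costs nothing, so your ``another $O(h^{p+1})$'' is unnecessary there. The real issue is the lift: $(\widetilde K_H^{\rm EC})^\sharp$ depends on the frame through $Q^\top Q$, and because $\BHODAPP=O(h^{-1})$ that dependence has an $O(h^{-1})$ Lipschitz constant. An operator-norm bound on $\Delta K$ therefore gives only $h^{p}$. To retain $h^{p+1}$, evaluate $\mathcal E_j(\Delta K)$ directly. Since $\widetilde L_H$ drops out of \cref{eq:kh-ecf} entirely, the frame mismatch then enters only as $\rho_H/\|\UHATOp\|^2$ times a factor bounded by $C\|Q^\top Q-I\|$. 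Finally, $\rho_H/\|\UHATOp\|^2=O(1)$ follows from the physical-field form, the divergence theorem applied to the face term, and \Cref{ass:up-bounds}.
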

\begin{proof}
  Set \(G_H[V]:=D_{\Up}H(\Up)[\Phi V]\).  Inserting its exact
  \(V^p\)-projection, whose contribution is integrated exactly by
  \(Q_{3p}\), and using \Cref{ass:field-proj-error} give
  \(\|\mathcal M_H[V]-\mathcal M_H^{\rm ref}[V]\|_F
  \le C^{3p}h^{p+1}\).  Together with
  \Cref{prop:h-mass-degree} and the uniform SPD bounds, this gives
  \(\|Q-I\|_F\le Ch^{p+1}\).  The local Lipschitz continuity of
  the Cholesky map and the connection formula gives
  \(\mathcal E_j(\Delta S)\le C_{j,S}^{3p}h^{p+1}\).
  The compatibility operators contain their corresponding directional
  MMEs, whose defect was estimated above, together with the
  \(V_H\)- and \(B_{H,\Delta}\)-contributions.
  The construction quadratures are exact for the projected fields, so
  their remaining defects are generated by \(E_k^A\) and
  \(E_k^{\Delta A}\).  Using \cref{eq:proj-def-vol} with the inverse
  estimate and \cref{eq:proj-def-face} gives
  \(\|(\widetilde C_H[V])^\sharp-C_H^{\rm ref}[V]\|_F
  \le(C^{3p}+C^{\rm hi}(n))h^{p+1}\), where
  \(C^{\rm hi}(n)\to0\).  The normalized algebraic map defining the
  \(H\)--ECF lift and the pullback in \cref{eq:sharp-frame} are
  locally Lipschitz for nonzero coefficients.  Hence, the same estimate
  holds for \(\Delta K\), and applying
  \(\mathcal E_j\) proves \cref{eq:s-k-defect}.
\end{proof}

\begin{theorem}[Defect of the antisymmetric modal-energy-exchange operator]
  \label{thm:exchange-defect}
  For every fixed mode index \(j\), there exist a constant
  \(C_j^{3p}\), independent of \(h\) and \(n\), and a positive
  function \(C_j^{\rm hi}(n)\), independent of \(h\), such that
  \(C_j^{\rm hi}(n)\to0\) as \(n\to\infty\) and
  \begin{equation}
    \mathcal E_j(\Delta N-\Delta S-\Delta K)
    \le
    \bigl(C_j^{3p}+C_j^{\rm hi}(n)\bigr)h^{p+1}.
    \label{eq:exchange-defect}
  \end{equation}
\end{theorem}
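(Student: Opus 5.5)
The plan is to use linearity of the skew part and of the commutator with \(\mathsf P_j\) inside \(\mathcal E_j\), together with the triangle inequality. Then \cref{eq:exchange-defect} follows from \Cref{prop:n-defect,prop:s-k-defect}.

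First I check that \(\mathcal E_j\) is a seminorm in its argument. For matrices \(Y,Z\) we have \((Y-Z)_{\rm skw}=Y_{\rm skw}-Z_{\rm skw}\) by \Cref{def:block-sym-skw}. The map \(Y\mapsto \mathsf P_jY_{\rm skw}-Y_{\rm skw}\mathsf P_j\) is therefore linear. The numerator of \cref{eq:mode-defect-functional} is the absolute value of a linear functional of \(Y\), evaluated at the fixed vector \(\UHATOpref\), and the denominator does not depend on \(Y\). The functional \(\mathcal E_j\) is thus absolutely homogeneous and subadditive, which gives
\[
  \mathcal E_j(\Delta N-\Delta S-\Delta K)
  \le \mathcal E_j(\Delta N)+\mathcal E_j(\Delta S)+\mathcal E_j(\Delta K).
\]
Here \(\Delta N\), \(\Delta S\), \(\Delta K\) are all expressed in the same reference frame through the pullback \cref{eq:sharp-frame}. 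That pullback is exact, so no frame-mismatch term appears.

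Next I insert the three bounds. \Cref{prop:n-defect} gives \(\mathcal E_j(\Delta N)\le C_{j,N}^{\rm hi}(n)h^{p+2}\). For \(h\le h_0\), which I take with \(h_0\le1\) as the standing mesh-size regime (shape regularity, \Cref{ass:shape-regular}), this is at most \(C_{j,N}^{\rm hi}(n)h^{p+1}\). \Cref{prop:s-k-defect} gives \(\mathcal E_j(\Delta S)\le C_{j,S}^{3p}h^{p+1}\) and \(\mathcal E_j(\Delta K)\le (C_{j,K}^{3p}+C_{j,K}^{\rm hi}(n))h^{p+1}\). I then set
\[
  C_j^{3p}:=C_{j,S}^{3p}+C_{j,K}^{3p},
  \qquad
  C_j^{\rm hi}(n):=C_{j,N}^{\rm hi}(n)\,h_0+C_{j,K}^{\rm hi}(n).
\]
The first constant is independent of \(h\) and \(n\). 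The second is independent of \(h\), positive, and tends to zero as \(n\to\infty\), because both of its summands do.

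The only point needing care is bookkeeping rather than analysis. The defects are all evaluated at the same state \(\UHATp\), direction \(V\), and reference vector \(\UHATOpref\), so the three estimates are compatible and can be added. In particular, \(\Delta S\) and \(\Delta K\) use the same \(V\) that enters \(\mathcal M_H[V]\) in \Cref{prop:s-k-defect}. The nonzero condition \(\UHATOpref\neq0\) is inherited from \cref{eq:mode-defect-functional}.
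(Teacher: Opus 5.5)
Your proposal is correct and takes essentially the paper's approach. The paper simply states that \cref{eq:exchange-defect} follows immediately from \Cref{prop:n-defect,prop:s-k-defect}; your subadditivity of \(\mathcal E_j\) (a linear map of \(Y\) inside an absolute value), together with \(h^{p+2}\le h_0h^{p+1}\), is exactly what ``immediately'' leaves unsaid.

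One small correction: the bound \(h\le h_0\) comes from the implicit small-mesh regime of this asymptotic estimate. It is not a consequence of \Cref{ass:shape-regular}, which only controls \(h/\rho\).
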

\begin{proof}
  \Cref{eq:exchange-defect} follows immediately
  from \Cref{prop:n-defect,prop:s-k-defect}.
\end{proof}
Thus, at the same fixed state and coefficient direction, the practical
\(H\)--ECF/CPG exchange construction is consistent with the
\(H\)--ECF/EIG reference up to an \(O(h^{p+1})\) defect.


\section{Concluding discussion}
\label{sec:discussion}

The four models used in this paper occupy the three levels introduced
in \Cref{sec:intro}, as summarized in \Cref{tab:model-comparison}.
I--EIG is the infinite-mode reference. The finite-mode
exact-integration level contains both the native \(p\)-Galerkin EIG
model, which exposes the energy-compatibility defect, and
\(H\)--ECF/EIG, which serves as the finite-mode closure reference.
\(H\)--ECF/CPG is its practical quadrature-based realization. The
consistency path is therefore from \(H\)--ECF/CPG to \(H\)--ECF/EIG as
the quadrature defects are removed, and then to I--EIG as the resolved
mode space is enlarged.

\begin{table}[t]
  \centering \scriptsize \setlength{\tabcolsep}{3pt}
  \renewcommand{\arraystretch}{1.25}
  \caption{Structural comparison of I--EIG, native \(p\)-Galerkin EIG,
    \(H\)--ECF/EIG, and \(H\)--ECF/CPG.}
  \label{tab:model-comparison}
  \begin{tabular}{p{0.17\textwidth}
      p{0.18\textwidth} p{0.18\textwidth} p{0.18\textwidth}
      p{0.18\textwidth}} \toprule & I--EIG & EIG & \(H\)--ECF/EIG &
    \(H\)--ECF/CPG \\ \midrule Role & Continuous reference system &
    Native Galerkin model & Reference closure model & Practical
    closure model \\

    Mode space & \(V^\infty\) & \(V^p\) & \(V^p\) & \(V^p\) \\

    Operator level & Exact integration & Exact integration & Exact
    integration & CPG quadrature \\

    \(H\)--ECF & --- & No & Yes & Yes \\

    \(H\)--SBP identity & Automatic & Automatic & Automatic &
    By CPG \\

    Energy--state compatibility & Exact & Not guaranteed & Enforced by
    \(H\)--ECF & Enforced by \(H\)--ECF \\

    Equation consistency
    & Original PDE
    & Original PDE as \(p\to\infty\)
    & Original PDE as \(p\to\infty\)
    & \(H\)--ECF/EIG as the construction quadrature degree increases
    \\ \bottomrule
  \end{tabular}
\end{table}

The exchange-defect estimate in \Cref{sec:exchange-defect} should be
read in this hierarchy. The common-frame transport is an exact change
of representation and does not contribute to the
modal-energy-exchange defect. The \(N\)-defect has the simplest
origin: once the coefficient field has been projected into
\(V^{p+1}\), the \(Q_{3p}\)-construction layer integrates the projected
\(N\)-operator contribution exactly. Hence, \(\Delta N\) has no
independent construction-floor term, and its remaining contribution is
the projection defect reduced by enriching \(Q_{\mathrm{hi}(n)}\).
In contrast, the \(S\)-defect arises from the trajectory-induced
Cholesky connection and contains the \(n\)-independent floor set by
the fixed construction layer. The \(K\)-defect arises from the
\(H\)--ECF closure and contains both this floor and the projection
contribution reduced by enriching \(Q_{\mathrm{hi}(n)}\).
By \Cref{thm:exchange-defect}, this floor changes only the constant and
does not reduce the nominal \(O(h^{p+1})\) consistency of the
modal-energy-exchange operator.

The central result of this paper is that total physical-energy
conservation follows from preserving the pairwise modal
energy-exchange structure, rather than being imposed only as a scalar
balance. The \(H\)--SBP identity provides the discrete
integration-by-parts structure, while \(H\)--ECF removes only the
component incompatible with the scalar energy identity without
replacing the trajectory-induced metric evolution.
The resulting finite-mode construction is implementable
in the fixed basis and is consistent with its exact-integration reference
at a fixed state and coefficient direction.
Its relation to the fixed basis also closes
the construction as an autonomous system.


\section*{Acknowledgments}
The author acknowledges partial support from JST/JICA SATREPS
(Grant No.~JPMJSA2109), thanks Drs.~Yuta Kawai and Seiya Nishizawa
for helpful discussions, and is grateful to his wife and daughter for
their encouragement.
Large language models were used for brainstorming, structural
refinement, language editing, and preliminary organization of proof
explanations.  All mathematical statements, derivations, proofs,
technical decisions, and scientific conclusions were verified and
finalized by the author.

\bibliographystyle{siamplain}
\bibliography{references}
\end{document}